\documentclass[12pt]{amsart}
\usepackage{amsmath, amsthm, amssymb, url}
\usepackage[all]{xy}

\textwidth=16truecm
\textheight=22truecm
\hoffset=-1.2truecm
\voffset=-1.2truecm

\input xy
\xyoption{all}

\newtheorem{thm}{Theorem}[section]
\newtheorem{lem}[thm]{Lemma}
\newtheorem{cor}[thm]{Corollary}

\newtheorem{prop}[thm]{Proposition}

\theoremstyle{definition}

\newtheorem{rmk}[thm]{Remark}

\newenvironment{axiom}{\begin{list}{$\bullet$}{\setlength{\labelsep}{.7cm}
\setlength{\leftmargin}{2.5cm}\setlength{\rightmargin}{0cm}%
\setlength{\labelwidth}{1.8cm}\setlength{\itemsep}{0pt}}}{\end{list}}
\newcommand{\ax}[1]{\item[{\bf #1}\hfill]\index{#1}}

\def\bF{\mathbb{F}}
\def\bN{\mathbb{N}}
\def\bQ{\mathbb{Q}}
\def\bZ{\mathbb{Z}}

\def\cB{\mathcal{B}}

\def\cL{\mathcal{L}}
\def\cT{\mathcal{T}}
\def\cM{\mathcal{M}}

\def\cO{\mathcal{O}}

\def\rQ{\mathrm{Q}}

\def\alg{\operatorname{alg}}
\def\dim{\operatorname{dim}}

\def\trdeg{\operatorname{trdeg}}

\def\sep{\operatorname{sep}}
\def\rac{\operatorname{rac}}
\def\VF{\operatorname{VF}}

\newcommand{\ec}{\prec_{\exists}}
\newcommand{\bfind}[1]{\index{#1}{\bf #1}}

\font\teneu=eufm10 scaled 1200
\font\seveneu=eufm7 scaled 1200
\font\fiveeu=eufm5 scaled 1200

\def\eu #1{{\mathchoice{{\hbox{\teneu #1}}}{{\hbox{\teneu #1}}}
{{\hbox{\seveneu #1}}}{{\hbox{\fiveeu #1}}}}}

\begin{document}

\title{The model theory of separably tame valued fields}
\author{Franz--Viktor Kuhlmann}
\address{Department of Mathematics and Statistics,
University of Saskatchewan, 106 Wiggins Road,
Saskatoon, Saskatchewan, Canada S7N 5E6}
\email{fvk@math.usask.ca}
\thanks{The first-named author is partially supported by a Canadian NSERC grant.}
\author{Koushik Pal}
\address{Department of Mathematics and Statistics,
University of Saskatchewan, 106 Wiggins Road,
Saskatoon, Saskatchewan, Canada S7N 5E6}
\email{koushik.pal@usask.ca}
\thanks{The second-named author is partially supported by a PIMS Postdoctoral Fellowship.}
\maketitle

\begin{abstract}\noindent
A henselian valued field $K$ is called separably tame if its separable-algebraic closure $K^{\sep}$ is a tame extension, that is, the ramification field of the normal extension $K^{\sep}|K$ is separable-algebraically closed. Every separable-algebraically maximal Kaplansky field is a separably tame field, but not conversely. In this paper, we prove Ax--Kochen--Ershov Principles for separably tame fields. This leads to model completeness and completeness results relative to the value group and residue field. As the maximal immediate extensions of separably tame fields are in general not unique, the proofs have to use much deeper valuation theoretical results than those for other classes of valued fields which have already been shown to satisfy Ax--Kochen--Ershov Principles. Our approach also yields alternate proofs of known results for separably closed valued fields.
\end{abstract}

\section{Introduction}
In this paper, we consider valued fields. By $(K,v)$ we mean a field $K$ equipped with a valuation $v$. We write a valuation in the classical additive (Krull) way, that is, the value group is an additively written ordered abelian group, the homomorphism property of $v$ says that $vab= va+vb$, and the ultrametric triangle law says that $v(a+b)\geq\min \{va,vb\}$. Further, we have the rule $va=\infty \Leftrightarrow a=0$. We denote the value group by $vK$, the residue field by $Kv$ and the valuation ring by $\cO_v$ or $\cO_K$. For elements $a\in K$, the value is denoted by $va$, and the residue by $av$. By a valued field extension $(L|K, v)$ we mean that $(L, v)$ is a valued field, $L|K$ is a field extension, and $K$ is endowed with the restriction of $v$.

Our main concern is the model theory of separably tame valued fields, which we will introduce now.

A valued field is {\bf henselian} if it satisfies Hensel's Lemma, or equivalently, if it admits a unique extension of the valuation to every algebraic extension field. The {\bf henselization} of a valued field $(L, v)$, denoted by $(L, v)^h$ or simply $L^h$, is the ``minimal'' extension of $(L, v)$ which is henselian. It is unique up to isomorphism of valued fields. The henselization is an immediate separable-algebraic extension.

Every finite extension $(E|K, v)$ of valued fields satisfies the {\bf fundamental inequality}:
\begin{equation}		\label{fundineq}
n\>\geq\>\sum_{i=1}^{\rm g} {\rm e}_i {\rm f}_i
\end{equation}
where $n = [E : K]$ is the degree of the extension, $v_1,\ldots,v_{\rm g}$ are the distinct extensions of $v$ from $K$ to $E$, ${\rm e}_i = (v_iE : vK)$ are the respective ramification indices and ${\rm f}_i = [Ev_i : Kv]$ are the respective inertia degrees. The extension is called {\bf defectless} if equality holds in (\ref{fundineq}). A valued field $(K, v)$ is called {\bf defectless} (or {\bf stable}) if each of its finite extensions is defectless, and {\bf separably defectless} if each of its finite separable extensions is defectless. If char $Kv = 0$, then $(K, v)$ is defectless (this is a consequence of the ``Lemma of Ostrowski'' \cite[Section 2.2]{FVK_TF}). Note that ${\rm g} = 1$ if $(K, v)$ is henselian, in which case (\ref{fundineq}) becomes $n \ge {\rm e\,f}$. In particular, if a valued field $(K, v)$ is henselian and defectless, then we have $n = {\rm e\,f}$, i.e., $[E : K] = [vE : vK][Ev : Kv]$ for all finite extensions $(E|K, v)$.

Take a henselian field $(K, v)$, and let $p$ denote the {\bf characteristic exponent} of its residue field $Kv$, i.e., $p=\mbox{char }Kv$ if this is positive, and $p=1$ otherwise. An algebraic extension $(L|K, v)$ of a henselian field $(K, v)$ is called {\bf tame} if every finite subextension $E|K$ of $L|K$ satisfies the following conditions:
\begin{axiom}
\ax{(TE1)} The ramification index $(vE : vK)$ is prime to $p$,
\ax{(TE2)} The residue field extension $Ev|Kv$ is separable,
\ax{(TE3)} The extension $(E|K, v)$ is defectless.
\end{axiom}

A {\bf tame valued field} (in short, {\bf tame field}) is a henselian field for which all algebraic extensions are tame. Equivalently, a tame field is one whose algebraic closure is equal to the ramification field $K^r$ of the normal extension $K^{\sep}|K$, where $K^{\sep}$ denotes the separable-algebraic closure of $K$ \cite[Lemma 2.17(a)]{FVK_TF}. Likewise, a {\bf separably tame field} is a henselian field for which all separable-algebraic extensions are tame. Equivalently, a separably tame field is one whose separable-algebraic closure is equal to $K^r$ \cite[Lemma 2.17(a)]{FVK_TF}. The algebraic properties of tame fields and separably tame fields have been studied in \cite{FVK_TF}, and some of those properties will be mentioned in Section~\ref{sectastf}.

One useful thing to note here is that a separably tame field is trivially a tame field when its characteristic is 0, because then every algebraic extension $L|K$ is separable. Thus, in the residue characteristic 0 case and in the mixed characteristic case, a separably tame field is a tame field. That is why, when studying the model theory of separably tame fields, we restrict our attention without loss of generality to the (positive equi-characteristic) case when char $K$ = char $Kv = p$, where $p$ is a prime.

An extension $(L|K,v)$ of valued fields is called {\bf immediate} if the canonical embeddings $vK\hookrightarrow vL$ and $Kv\hookrightarrow Lv$ are onto. A valued field is called {\bf algebraically maximal} if it does not admit proper immediate algebraic extensions; it is called {\bf separable-algebraically maximal} if it does not admit proper immediate separable-algebraic extensions. Every separable-algebraically maximal valued field is, therefore, a henselian field. Also, by Zorn's Lemma, every valued field admits a maximal (algebraic, separable-algebraic, or respectively transcendental) immediate extension.

Take a valued field $(K, v)$ and denote the characteristic exponent of $Kv$ by $p$. Then $(K, v)$ is a {\bf Kaplansky field} if $vK$ is $p$-divisible and $Kv$ does not admit any finite extension whose degree is divisible by $p$. All algebraically maximal Kaplansky fields are tame fields \cite[Corollary 3.3]{FVK_TF}. But the converse does not hold since for a tame field it is admissible that its residue field has finite separable extensions with degree divisible by $p$. Similarly, all separable-algebraically maximal Kaplansky fields are separably tame fields \cite[Corollary 3.11(a)]{FVK_TF}, but not conversely. It is because of the latter fact that the uniqueness of maximal immediate extensions will in general fail (cf.~\cite{FVKMPPR}). This is what makes the proof of the model theoretic results for tame and separably tame fields much harder than for algebraically and separable-algebraically maximal Kaplansky fields.

Let us now give a quick introduction of the basic notions in the model theory of valued fields that will be used in the questions we will ask for separably tame fields.

We take $\cL_{\VF} := \{+, -, \cdot\,, \mbox{}^{-1}, 0, 1, \cO\}$ to be the language of valued fields, where $\cO$ is a binary relation symbol for valuation divisibility. That is, $\cO(a,b)$ will be interpreted by $va\geq vb$, or equivalently, $a/b$ being an element of the valuation ring $\cO_v\,$. We will write $\cO(X)$ in place of $\cO(X,1)$ (note that $\cO(a,1)$ says that $va\geq v1 = 0$, i.e., $a\in\cO_v$).

For $(K,v)$ and $(L,v)$ to be elementarily equivalent in the language of valued fields, it is necessary that $vK$ and $vL$ are elementarily equivalent in the language $\cL_\mathrm{OG} $ \linebreak $ := \{+, -, 0, <\}$ of ordered groups, and that $Kv$ and $Lv$ are elementarily equivalent in the language $\cL_\mathrm{F} := \{+, -, \cdot\,, \mbox{ }^{-1}, 0, 1\}$ of fields (or in the language $\cL_\mathrm{R} := \{+, -, \cdot\,, 0, 1\}$ of rings). This is because elementary sentences about the value group and about the residue field can be encoded in the valued field itself.

Our main concern in this paper is to find additional conditions on $(K,v)$ and $(L,v)$ under which these necessary conditions are also sufficient, i.e., the following {\bf Ax--Kochen--Ershov Principle} (in short: AKE$^\equiv$ Principle) holds:
\begin{equation}		\label{AKEequiv}
vK\equiv vL\>\wedge\> Kv\equiv Lv \;\;\;\Longrightarrow\;\;\; (K,v)\equiv \mbox{$(L,v)$}\;.
\end{equation}

An AKE$^\prec$ Principle is the following analogue for elementary extensions:
\begin{equation}		\label{AKEprec}
(K,v)\subseteq (L,v)\>\wedge\> vK\prec vL\>\wedge\> Kv\prec Lv\;\;\;\Longrightarrow\;\;\;(K,v)\prec \mbox{$(L,v)$}\;.
\end{equation}

If $\cM$ is an $\cL$-structure and $\cM'$ a substructure of $\cM$, then we say that $\cM'$ is {\bf existentially closed} in $\cM$, and write $\cM'\ec\cM$, if every existential $\cL$-sentence with parameters from $\cM'$ which holds in $\cM$ also holds in $\cM'$. The corresponding AKE$^\exists$ Principle is then:
\begin{equation}		\label{AKE}
(K,v)\subseteq (L,v)\>\wedge\> vK\ec vL\>\wedge\> Kv\ec Lv \;\;\;\Longrightarrow\;\;\; (K,v)\ec\mbox{$(L,v)$}\;.
\end{equation}
The conditions
\begin{equation}		\label{sico}
vK\ec vL\mbox{\ \ and\ \ } Kv\ec Lv
\end{equation}
will be called {\bf side conditions}. It is an easy exercise in model theoretic algebra to show that these conditions imply that $vL/vK$ is torsion free and that $Lv|Kv$ is \bfind{regular}, i.e., the algebraic closure of $Kv$ is linearly disjoint from $Lv$ over $Kv$, or equivalently, $Kv$ is relatively algebraically closed in $Lv$ and $Lv|Kv$ is separable; cf.\ Lemma~\ref{sica}.

A valued field for which (\ref{AKE}) holds will be called an {\bf AKE$^\exists$-field}. A class {\bf C} of valued fields will be called an {\bf AKE$^\equiv$-class} (respectively, {\bf AKE$^\prec$-class}) if (\ref{AKEequiv}) (respectively, (\ref{AKEprec})) holds for all $(K, v), (L, v)\in$ {\bf C}, and it will be called an {\bf AKE$^\exists$-class} if (\ref{AKE}) holds for all $(K, v)\in$ {\bf C}. We will also say that {\bf C} is {\bf relatively complete} if it is an AKE$^\equiv$-class, and that {\bf C} is {\bf relatively model complete} if it is an AKE$^\prec$-class. Here, ``relatively'' means ``relative to the value groups and residue fields''.

Things are easy when $(K, v)$ is trivially valued. From the fundamental inequality one deduces that $vL/vK$ is torsion for every algebraic extension $L|K$. Since $vK = \{0\}$, it follows that $vL = \{0\}$. Thus, $K\cong Kv$, and this isomorphism extends to an isomorphism of $L$ and $Lv$. This yields that $[L:K] = [Lv:Kv]$ for every finite extension $L|K$ showing that the extension is defectless, and that there are no proper immediate extensions. This in turn implies that $(K, v)$ is henselian, and that any separable and finite extension $(L|K, v)$ is a tame extension. In particular, any trivially valued field is separably tame. Analogously, if $(L|K, v)$ is a valued field extension satisfying the side conditions, then $(L, v)$ is again trivially valued. Consequently, $K\cong Kv$ and $L\cong Lv$. In particular, $(L|K, v)$ satisfies all three AKE Principles.

%It is easy to see that, by the fundamental inequality and the fact that any trivially valued field is henselian and defectless, any finite extension $(E|K, v)$ of a trivially valued field $(K, v)$ is also trivially valued. Consequently any separable and finite extension $(E|K, v)$ is a tame extension. In particular, any trivially valued field is separably tame. It is also easy to see that if $(L|K, v)$ is a valued field extension of a trivially valued field $(K, v)$ such that the extension satisfies the side conditions, then $(L, v)$ is also trivially valued. Consequently $K\cong Kv$ and $L\cong Lv$. In particular, the extension $(L|K, v)$ satisfies all three AKE Principles. %So, for the rest of the paper, we can restrict our attention to nontrivially valued separably tame fields unless otherwise specified.

Almost immediately one also notices that purely inseparable extensions are a serious obstruction to obtaining such AKE Principles, even for separable-algebraically closed valued fields, let alone separably tame fields. For example, take a separable-algebraically closed nontrivially valued field $(K, v)$ of characteristic $p > 0$ such that its algebraic closure $K^{\alg}$ is a proper extension. Clearly $(K^{\alg}, v)$ is also separable-algebraically closed. Since the residue field of $(K, v)$ is algebraically closed and the value group is divisible (see Lemma~\ref{sep_perf_hull}), it follows that $(K^{\alg}|K, v)$ is an immediate extension. In particular, the side conditions hold. Now take $a\in K^{\alg}\setminus K$. Since $K$ is separable-algebraically closed, the minimal polynomial $g(X)$ of $a$ over $K$ is a purely inseparable polynomial. But then $(K^{\alg}, v)\models\exists x\;(g(x) = 0)$, while $(K, v)\models\neg\exists x\;(g(x) = 0)$. Thus, even the AKE$^\exists$ Principle fails for separable-algebraically closed, and hence separably tame, valued fields in $\cL_{\VF}$.

So, at the very least we can expect AKE Principles only for separable extensions of separably tame fields. To that end, we consider a different language $\cL_\rQ$ for valued fields such that, considered as $\cL_\rQ$-structures, one field is an extension of another provided it is a
%$p$-basis extension (i.e., that linear independence over the subfield of $p^\mathrm{th}$ powers is preserved), and hence it is a
separable extension. We achieve this by adding to $\cL_{\VF}$ predicates $(Q_m(x_1, \ldots, x_m))_{m\in\omega}$, which model the notion of $p$-independence. The details are given in Section~\ref{modprelim}. This language was first used by Ershov \cite{YLE_LQ} and Wood \cite{CW_SCF} in the context of separable-algebraically closed fields, and later by Delon \cite{FD_thesis} in the context of separable-algebraically maximal Kaplansky fields. %We subsequently show that separably tame fields form an AKE$^\exists$-class in the language $\cL_\rQ$.

Another thing to consider in obtaining the AKE Principles for separably tame fields in general is the notion of {\em $p$-degree}, also known as the {\em Ershov invariant}. The definition and details are given in Section~\ref{algprelim}. The $p$-degree of a valued field of characteristic $p$ can be either finite or $\infty$. It is an isomorphism-invariant for valued fields. Being an elementary property in the language $\cL_\rQ$, it features as an essential property in the elementary theory of separably tame fields. The case of infinite $p$-degree is hard to deal with. In this paper, we therefore restrict our attention to the case of finite $p$-degrees. %We subsequently show that separably tame fields of a fixed positive characteristic $p$ and fixed finite $p$-degree $e$ form an AKE$^\prec$-class in $\cL_\rQ$.

The hardest problem, of course, is caused by the non-uniqueness of maximal immediate extensions of separably tame fields. For a list of elementary classes of valued fields that are known to satisfy all or some of the AKE Principles, refer to the Introduction in \cite{FVK_TF}. All the valued fields mentioned in that list have the common property that their maximal (algebraic, separable-algebraic, or transcendental) immediate extensions are unique up to valuation preserving isomorphism. However, as is shown in \cite{FVK_TF} and as we will show again in this paper, this uniqueness is not indispensable.

We will now list the main theorems of this paper. In order to do that, we need one more definition that will be fundamental for this paper.

Let $(L|K, v)$ be any extension of valued fields. Assume that $L|K$ has finite transcendence degree. Then (by \cite[Corollary 2.3]{FVK_TF}),
\begin{equation}		\label{wtdgeq}
\trdeg L|K \>\geq\> \trdeg Lv|Kv \,+\, \dim_\bQ \bQ\otimes vL/vK\;.
\end{equation}
The last term in (\ref{wtdgeq}) is called the {\bf rational rank} ({\bf r.r.}) of $vL$ over $vK$. We say that $(L|K, v)$ is {\bf without transcendence defect} if equality holds in (\ref{wtdgeq}). If $L|K$ does not have finite transcendence degree, then we say that $(L|K, v)$ is without transcendence defect if every subextension of finite transcendence degree is. In Section~\ref{sectmwtd} we show that

\begin{thm}		\label{sewtd}
Every separable extension without transcendence defect of a separably tame field satisfies the AKE$^\exists$ Principle in $\cL_{\VF}$.
\end{thm}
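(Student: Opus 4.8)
The plan is to prove the equivalent statement that, given a separably tame field $(K,v)$ and a separable extension $(L|K,v)$ without transcendence defect satisfying the side conditions $vK\ec vL$ and $Kv\ec Lv$, every finitely generated subextension of $(L|K,v)$ embeds over $(K,v)$ into a $|K|^+$-saturated elementary extension $(K^*,v)\succ(K,v)$; granting this, an existential $\cL_{\VF}$-sentence with parameters in $K$ true in $L$ has its witnesses inside some such subextension, hence is true in $K^*$, hence in $K$, which gives $(K,v)\ec(L,v)$. So I fix a finitely generated subextension and relabel it $(L|K,v)$: it is again separable and without transcendence defect, the side conditions remain valid (existential sentences are preserved under extensions of the value group and of the residue field, so $vK\ec vL$ and $Kv\ec Lv$ descend to the subextension), and now $L|K$ has finite transcendence degree. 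Moreover $(K^*,v)$, being elementarily equivalent to $(K,v)$, is again a separably tame field — here one uses the standing restriction to finite $p$-degree — in particular henselian with $p$-divisible value group and perfect residue field.

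Using the standard fact that a substructure is existentially closed in an extension exactly when that extension embeds over it into some elementary extension of it, the side condition $vK\ec vL$ together with the saturation of $vK^*$ yields an embedding $vL\hookrightarrow vK^*$ over $vK$ (and since $(L|K,v)$ is Abhyankar, $vL/vK$ is finitely generated, so only a type in finitely many variables must be realised), and likewise $Lv\hookrightarrow K^*v$ over $Kv$. I then apply the structure theory of extensions without transcendence defect: being finitely generated, separable and without transcendence defect, $(L|K,v)$ admits a \emph{separating standard valuation transcendence basis} $x_1,\dots,x_s,y_1,\dots,y_r$ — the values $vx_i$ rationally independent over $vK$ and generating the free group $vL/vK$, the residues $y_jv$ a separating transcendence basis of $Lv|Kv$, and $L$ finite \emph{and separable} over $K_0:=K(x_1,\dots,x_s,y_1,\dots,y_r)$, with $vK_0=vK\oplus\bigoplus_i\bZ vx_i$ and $K_0v=Kv(y_1v,\dots,y_rv)$ rational. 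Choosing $x_i^*,y_j^*\in K^*$ that realise the images of the $vx_i$ and $y_jv$ (possible since $v$ maps onto $vK^*$ and the residue map onto $K^*v$), the classical valuation-transcendence criterion shows that $x_i\mapsto x_i^*,\ y_j\mapsto y_j^*$ extends the inclusion $K\subseteq K^*$ to a valued field embedding $\iota_0\colon(K_0,v)\hookrightarrow(K^*,v)$ inducing the chosen embeddings on $vK_0$ and $K_0v$.

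The crux is to extend $\iota_0$ across the finite separable extension $L|K_0$. Inside a common algebraic closure, $\iota_0$ extends uniquely, by functoriality of the henselization and henselianity of $K^*$, to an isomorphism of $(K_0^h,v)$ onto a valued subfield of $(K^*,v)$; and by the stability theorem — a separable extension without transcendence defect of a separably defectless field is separably defectless, and $(K,v)$ is separably defectless because it is separably tame — the finite separable extension $L^h|K_0^h$ is defectless, so $[L^h:K_0^h]=(vL:vK_0)[Lv:K_0v]$. Now an Embedding Lemma for separably tame fields, in the spirit of the one for tame fields in \cite{FVK_TF}, applies: $(K^*,v)$ is a sufficiently saturated separably tame field containing $(K_0^h,v)$, and the embeddings $vL\hookrightarrow vK^*$ over $vK_0$ and $Lv\hookrightarrow K^*v$ over $K_0v$ are at hand, so $(L^h,v)$ embeds into $(K^*,v)$ over $(K_0^h,v)$ compatibly with them; restricting to $L$ and composing with $\iota_0$ produces the desired embedding $(L,v)\hookrightarrow(K^*,v)$ over $K$.

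The main obstacle is the Embedding Lemma, and within its proof the wildly ramified part of $L^h|K_0^h$: since $L|K_0$ is separable this part is a tower of Artin--Schreier extensions, and to realise it inside $K^*$ one must produce, for each relevant Artin--Schreier class, a genuine solution in $K^*$ — which is precisely where the separable tameness of $K^*$ does the work ($p$-divisibility of $vK^*$ supplies an element of the correct value, and separable tameness forces the pertinent Artin--Schreier classes over $K^*$ to vanish), while the supplied embeddings of value group and residue field pin down the tamely ramified and residual data; this is the algebraic content developed in \cite{FVK_TF}. It is the hypothesis \emph{without transcendence defect} that makes the whole argument go through: it yields the decomposition with $L$ \emph{finite} over $K_0$ and no proper immediate extension anywhere in sight, so the failure of uniqueness of maximal immediate extensions of separably tame fields — which is what makes the general case so much harder — never intervenes.
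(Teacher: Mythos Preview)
Your overall strategy---reduce to finitely generated $L|K$, pick a separating valuation transcendence basis $\cT$, embed $K_0=K(\cT)$ into $K^*$, then extend across the finite separable extension $L|K_0$---is natural, but it diverges from the paper's route and has a real gap at the extension step.

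The gap is your appeal to ``the stability theorem'': the statement you invoke, that a separable function field without transcendence defect over a \emph{separably} defectless base is again separably defectless, is only available with the additional hypothesis that $vK$ is cofinal in $vF$ (this is exactly Corollary~2.12 and the cited \cite[Theorem~1.1]{FVK_ER}). Cofinality is not automatic under the side conditions: take $K$ algebraically closed with $vK=\bQ$ (so $K$ is separably tame and $vK$ is divisible, hence existentially closed in any ordered abelian overgroup) and adjoin a transcendental $x$ whose value lies above all of $vK$; then $vL=\bQ\oplus\bZ$ lexicographically and $vK$ is not cofinal in $vL$. Without defectlessness of $L^h|K_0^h$ your argument does not go through. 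Note also a related confusion: since you arrange $vK_0=vL$ and $Lv|K_0v$ separable, if defectlessness did hold then $L^h|K_0^h$ would be \emph{unramified}, and there is no ``wildly ramified part'' to speak of---the Artin--Schreier discussion is misplaced. The real obstruction in your setup is possible \emph{defect}, not wild ramification, and that is precisely what your unproven Embedding Lemma would have to absorb.

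The paper avoids this obstacle by a detour through the perfect hull and the completion. Since $(K,v)$ is separably tame, $(K^{1/p^\infty},v)$ is \emph{tame} and hence genuinely \emph{defectless}; for each finitely generated $F\subseteq L$ the extension $FK^{1/p^\infty}|K^{1/p^\infty}$ is still without transcendence defect, so Theorem~\ref{hrwtd} (which needs a defectless base, no cofinality) makes it strongly inertially generated, and Lemma~\ref{ael} embeds it into $(K^*)^{1/p^\infty}\subseteq (K^*)^c$. One now has an embedding of $F$ into the \emph{completion} $(K^*)^c$ respecting $\rho,\sigma$. The new ingredient is Embedding Lemma~I (Lemma~\ref{EI}, proved via Lemma~\ref{EILem}): using Theorem~\ref{sepdefextn} to produce a separating valuation transcendence basis of the image $F_1\subseteq (K^*)^c$ over $K$, one perturbs the basis and a primitive element by the Implicit Function Theorem for the henselian field $K^*$, landing the whole of $F_1$ inside $K^*$ while preserving values and residues. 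So the separating transcendence basis of Theorem~\ref{sepdefextn} is used, as you sensed, but at a different stage---to correct an embedding already known to exist into the completion---rather than to argue defectlessness of $L^h|K_0^h$ directly. (Incidentally, ``separably tame'' is first-order, so $K^*$ is separably tame without any appeal to finite $p$-degree.)
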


A more general version of this result is already proved in \cite{FVK_TF} (see Theorem~\ref{stake}). However, we reprove the above special case because we give an alternate proof --- one that constructs appropriate embeddings of valued fields while respecting certain embeddings at the level of value groups and residue fields given a priori (see Section~\ref{sectmwtd}). This is crucial for obtaining the other AKE Principles in the context of separably tame fields.

\par\smallskip
The following is the main theorem of this paper.

\begin{thm}		\label{tameAKE}
The class of all separably tame fields of a fixed characteristic $p$ and a fixed finite $p$-degree $e$ is an AKE$^\exists$-class and an AKE$^\prec$-class in $\cL_\rQ$, and an AKE$^\equiv$-class in $\cL_{\VF}$.
\end{thm}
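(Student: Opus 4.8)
The plan is to derive Theorem~\ref{tameAKE} from Theorem~\ref{sewtd} by reducing the general AKE statements for separably tame fields to the case of extensions without transcendence defect, using the structure theory of maximal immediate extensions. The standard strategy, which I would follow here, is a back-and-forth / embedding argument: to establish the AKE$^\exists$ Principle it suffices to show that whenever $(K,v)\subseteq(L,v)$ satisfies the side conditions and both fields are separably tame of characteristic $p$ with the same finite $p$-degree $e$, then $(K,v)\ec(L,v)$; and for this, after passing to a $|K|^+$-saturated elementary extension $(L^*,v)$ of $(L,v)$, it is enough to embed $(L,v)$ over $K$ into $(L^*,v)$ (equivalently, into a suitably saturated separably tame field) in a way compatible with the given data. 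So the core technical task is an \emph{embedding lemma}: given a separably tame $(K,v)$, a ``small'' separably tame extension $(L,v)$, and a $|L|^+$-saturated separably tame $(K^*,v)\supseteq (K,v)$, together with compatible embeddings $vL\hookrightarrow vK^*$ over $vK$ and $Lv\hookrightarrow K^*v$ over $Kv$, extend these to an embedding $(L,v)\hookrightarrow(K^*,v)$ over $K$.

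The embedding lemma is proved in stages, peeling off one kind of extension at a time. First, using saturation, realize $vL$ inside $vK^*$ and $Lv$ inside $K^*v$, and pick a transcendence basis of $L|K$ so that the resulting intermediate field $K_0 = K(\text{basis})$ gives an extension $K_0|K$ without transcendence defect; by the regularity and torsion-freeness consequences of the side conditions (Lemma~\ref{sica}) one can choose the basis and its values/residues to match the chosen embeddings, and then invoke Theorem~\ref{sewtd} (in the constructive form the authors emphasize — respecting prescribed value-group and residue-field embeddings) to embed $(K_0^h,v)$ into $(K^*,v)$ appropriately. Next one must handle the remaining extension $L|K_0^h$, which is now \emph{immediate} and \emph{algebraic} — indeed, after the transcendence-defect-free part is absorbed, $vL=vK_0^h$ and $Lv=K_0^hv$. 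Here is where separable tameness and finite $p$-degree do the real work: one reduces to separable-algebraic immediate extensions (purely inseparable pieces are controlled in $\cL_\rQ$ precisely because the $Q_m$ predicates encode $p$-independence and the $p$-degree $e$ is fixed and finite, so a $p$-basis of $K$ remains a $p$-basis of $L$ and extends through the embedding), and then uses that a separably tame field is separable-algebraically maximal \emph{relative to tame extensions} — more precisely, one approximates elements of $L$ by pseudo-Cauchy sequences from $K_0^h$ and uses the valuation-theoretic results on pseudo-Cauchy sequences over separably tame (equivalently, separable-algebraically maximal with the Kaplansky-type hypotheses replaced by tameness) fields to find images in the saturated $K^*$. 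This is the step that genuinely needs the ``much deeper valuation theoretical results'' the abstract alludes to, since maximal immediate extensions are not unique.

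Once the embedding lemma is in hand, the three AKE Principles follow formally. For AKE$^\exists$ in $\cL_\rQ$: given the side conditions, saturate $(L,v)$ and embed $(L,v)$ into the saturation of $(K,v)$ over $K$ by the lemma, and note an existential $\cL_\rQ$-sentence with parameters in $K$ true in $(L,v)$ pulls back; the $\cL_\rQ$ quantifier-elimination-style handling of the $Q_m$'s ensures ``$\cL_\rQ$-substructure'' means ``separable extension'', so the $p$-degree hypothesis is exactly what makes the embedding exist. For AKE$^\prec$: assume $(K,v)\subseteq(L,v)$ with $vK\prec vL$ and $Kv\prec Lv$; by a routine saturation + embedding argument (embed a large saturated elementary extension of $(L,v)$ back over $K$ into a saturated elementary extension of $(K,v)$, using the lemma twice in a back-and-forth), conclude $(K,v)\prec(L,v)$. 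For AKE$^\equiv$ in $\cL_{\VF}$: given $vK\equiv vL$ and $Kv\equiv Lv$, pass to a common elementary extension of the value groups and of the residue fields, build over a prime base (or over $\bF_p$) separably tame models with those invariants, and apply AKE$^\prec$ along a suitable chain — here one works in $\cL_{\VF}$ rather than $\cL_\rQ$ because elementary \emph{equivalence} does not see the $p$-basis, and the $p$-degree being a fixed finite $e$ is part of the class definition so it is automatically matched. The main obstacle, as indicated, is the immediate-algebraic step of the embedding lemma: getting an element of $L$ transcendental-defect-free-reduced to land correctly in the saturated separably tame target despite non-uniqueness of maximal immediate extensions, which is where one must import the fine pseudo-Cauchy-sequence analysis for separably tame fields from \cite{FVK_TF} and control the interaction with purely inseparable defect via the Ershov invariant.
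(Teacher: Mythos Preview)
Your overall architecture matches the paper's: reduce to an embedding lemma, split the extension into a transcendence-defect-free part handled by Theorem~\ref{sewtd} and an immediate part handled by the deeper valuation theory, then deduce the three AKE principles formally via Lemma~\ref{REPimp}. But there is a genuine gap in your decomposition step.

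You write that after absorbing the transcendence basis, ``the remaining extension $L|K_0^h$ \ldots\ is now immediate and algebraic --- indeed, $vL=vK_0^h$ and $Lv=K_0^hv$.'' This is false in general. With $K_0=K(\cT)$ for a standard valuation transcendence basis $\cT$, one only gets that $vL/vK_0$ is torsion and $Lv|K_0v$ is algebraic; neither need be trivial. Worse, $K_0^h$ has no reason to be separably tame, so even if $L|K_0^h$ were immediate you could not invoke the separably-tame immediate machinery (Theorem~\ref{stake} or Theorem~\ref{stt3}) over it. The paper repairs this by passing to the \emph{relative algebraic closure} $K'$ of $K(\cT)$ in $L$ and invoking Lemma~\ref{Xsrac} (condition (CRAC)): since $L$ is separably tame and $Lv|K'v$ is algebraic, $K'$ is itself separably tame with $vK'=vL$ and $K'v=Lv$, so $(L|K',v)$ really is immediate and the base $K'$ is separably tame. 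This is the missing idea in your sketch.

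A second, smaller point: controlling the $p$-degree through the intermediate field is more delicate than ``a $p$-basis of $K$ remains a $p$-basis of $L$.'' The paper uses $\aleph_0$-saturation of $L$ to choose $\cT$ so that all $x_i^{1/p^n}$, $y_j^{1/p^n}$ already lie in $L$; this forces $\cT\subseteq (K')^{p^n}$ for every $n$, whence a $p$-basis $\cB$ of $K$ is a $p$-basis of $K'$ and $L|K'$ is separable. Without this device one does not know that $L|K'$ is separable, and the immediate-extension step would not apply.
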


%\par\smallskip
As an immediate consequence of the foregoing theorem, we get the following criterion for decidability.
\begin{thm}		\label{dec}
Let $(K, v)$ be a separably tame field of positive characteristic $p$ and finite $p$-degree $e$. Assume that the elementary theories $\mathrm{Th}(vK)$ of its value group (as an ordered abelian group) and $\mathrm{Th}(Kv)$ of its residue field (as a field) both admit recursive elementary axiomatizations. Then also the elementary theory of $(K, v)$ as a valued field admits a recursive elementary axiomatization and is decidable in $\cL_{\VF}$.
\end{thm}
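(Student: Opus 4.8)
The plan is to obtain Theorem~\ref{dec} as a routine consequence of the AKE$^\equiv$ Principle of Theorem~\ref{tameAKE}, by relativizing recursive axiom systems for the value group and the residue field into the language $\cL_{\VF}$. The first step is to record that the class of separably tame fields of characteristic $p$ and finite $p$-degree $e$ is axiomatized by a \emph{recursive} set $T_0$ of $\cL_{\VF}$-sentences. Indeed, ``henselian'' is given by the usual recursive axiom scheme; $\mathrm{char}\,K = p$ (which, as $p>0$, forces $\mathrm{char}\,Kv = p$ as well) by the single sentence stating that the $p$-fold sum of $1$ vanishes; ``$p$-degree equal to $e$'' by a single $\cL_{\VF}$-sentence, since for the fixed natural number $e$ the properties $[K:K^p]\ge p^e$ and $[K:K^p]\le p^e$ are each expressible by one first-order sentence; and the remaining conditions cutting out separable tameness among henselian fields of residue characteristic $p$ form a recursive axiom system over $\cL_{\VF}$ by the algebraic theory of tame and separably tame fields developed in \cite{FVK_TF} and recalled in Section~\ref{sectastf} (the substantive point being the first-order axiomatizability of separable defectlessness). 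In particular $(K,v)\models T_0$.

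By hypothesis there is a recursive set $\Sigma_{\mathrm{OG}}$ of $\cL_\mathrm{OG}$-sentences axiomatizing $\mathrm{Th}(vK)$ and a recursive set $\Sigma_{\mathrm{F}}$ of $\cL_\mathrm{F}$-sentences axiomatizing $\mathrm{Th}(Kv)$. As noted in the Introduction, elementary statements about the value group and about the residue field can be encoded inside the valued field, so there is a computable translation $\sigma\mapsto\widetilde{\sigma}$ of $\cL_\mathrm{OG}$-sentences into $\cL_{\VF}$-sentences with $(L,w)\models\widetilde{\sigma}\iff wL\models\sigma$, and a computable translation $\tau\mapsto\widehat{\tau}$ of $\cL_\mathrm{F}$-sentences into $\cL_{\VF}$-sentences with $(L,w)\models\widehat{\tau}\iff Lw\models\tau$. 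Set
\[
T \ :=\ T_0 \ \cup\ \{\,\widetilde{\sigma} : \sigma\in\Sigma_{\mathrm{OG}}\,\}\ \cup\ \{\,\widehat{\tau} : \tau\in\Sigma_{\mathrm{F}}\,\}.
\]
This is again a recursive set of $\cL_{\VF}$-sentences, and $(K,v)\models T$ because $vK\models\Sigma_{\mathrm{OG}}$ and $Kv\models\Sigma_{\mathrm{F}}$.

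Next I would prove that $T$ is complete. Let $(L,w)\models T$. From $(L,w)\models T_0$ it follows that $(L,w)$ is a separably tame field of characteristic $p$ and $p$-degree $e$, so $(L,w)$ and $(K,v)$ both lie in the AKE$^\equiv$-class furnished by Theorem~\ref{tameAKE}; moreover $wL\models\Sigma_{\mathrm{OG}}$ gives $wL\equiv vK$, and $Lw\models\Sigma_{\mathrm{F}}$ gives $Lw\equiv Kv$. The AKE$^\equiv$ Principle then yields $(L,w)\equiv(K,v)$. Hence every model of $T$ is elementarily equivalent to $(K,v)$, so $T$ is complete (and consistent, having $(K,v)$ as a model). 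Since $(K,v)\models T$ and $T$ is complete, the deductive closure of $T$ equals $\mathrm{Th}(K,v)$; thus $\mathrm{Th}(K,v)$ is axiomatized by the recursive set $T$, and by the classical fact that a consistent, recursively axiomatized, complete theory is decidable, $\mathrm{Th}(K,v)$ is decidable in $\cL_{\VF}$.

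The step I expect to be the main obstacle is not this deduction, which is routine once Theorem~\ref{tameAKE} is available, but the input used in the first paragraph: that separable tameness, together with fixed characteristic $p$ and finite $p$-degree $e$, is captured by a \emph{recursive} first-order axiom system in $\cL_{\VF}$ --- in particular the first-order axiomatizability of separable defectlessness. This rests entirely on the valuation-theoretic analysis of \cite{FVK_TF}; granting it, verifying that each defining clause is given by a recursive scheme of $\cL_{\VF}$-sentences is straightforward.
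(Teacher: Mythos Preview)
Your proof is correct and follows exactly the paper's approach: the paper likewise derives Theorem~\ref{dec} from the AKE$^\equiv$ Principle of Theorem~\ref{tameAKE} by adjoining to a recursive axiomatization of separably tame fields of characteristic $p$ and $p$-degree $e$ the translations (via Lemma~\ref{elprvgrf}) of the recursive axioms for $\mathrm{Th}(vK)$ and $\mathrm{Th}(Kv)$. The obstacle you anticipate does not arise, since the paper avoids axiomatizing separable defectlessness directly and instead uses Lemma~\ref{septame} (separably tame $\Leftrightarrow$ separable-algebraically maximal with $p$-divisible value group and perfect residue field) together with Theorem~\ref{exmaxvfth} to write down the explicit recursive scheme (HENS$_n$), (SMAXP$_n$), (VGD$_p$), (RFD$_p$), (FEC$_p$), (FPD$_{p,\,e}$).
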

Indeed, the axiomatization of $\mbox{\rm Th}(K, v)$ can be taken to consist of the axioms of separably tame fields of characteristic $p > 0$ and finite $p$-degree $e$, together with the translations of the axioms of $\mbox{\rm Th}(vK)$ and $\mbox{\rm Th}(Kv)$ to the language of valued fields (cf.\ Lemma~\ref{elprvgrf}).

%As an application, we will prove Theorem~\ref{sptame} in Section~\ref{sectmttf} which includes the following decidability result:
%\begin{thm}
%Take $q = p^n$ for some prime $p$ and some $n\in\bN$, and an ordered abelian group $\Gamma$. Assume that $\Gamma$ is divisible or elementarily equivalent to the $p$-divisible hull of $\bZ$. Then the elementary theory of the power series field $\bF_q((t^\Gamma))$ with coefficients in $\bF_q$ and exponents in $\Gamma$, endowed with its canonical valuation $v_t\,$, is decidable.
%\end{thm}

\par\smallskip
\par\smallskip
In the last section, we give an application of our main results to the cases of separable-algebraically closed valued fields and separable-algebraically maximal Kaplansky fields. Our approach gives alternate proofs to well-known results in this context.

\par\smallskip
\par\smallskip
We deduce our model theoretic results for separably tame fields from two main theorems. The first theorem is an improved version of \cite[Proposition 2.3]{HKFVK} for separable function field extensions, which answers the concern raised in \cite[Remark 2.4]{HKFVK}. We will prove it in Section~\ref{strvff}.

%\begin{thm}		\label{sepdefextn}
%Let $(F|K, v)$ be a separable valued function field with $v$ nontrivial on $F$. Let  r.r. $vF/vK = r$ and $\trdeg Fv|Kv = s$. Then for any set of rationally independent values $\{\gamma_1, \ldots, \gamma_r\}$ over $vK$ and any set of algebraically independent residues $\{\bar{w}_1, \ldots, \bar{w}_s\}$ over $Kv$, there exists a subset $\cT = \{x_1, \ldots, x_r, y_1, \ldots, y_s\}$ of $F$ with $vx_i = \gamma_i$, for $i = 1, \ldots, r$, and $y_jv = \bar{w}_j$, for $j = 1, \ldots, s$, such that
%\begin{eqnarray*}
%F|K(\cT) & \mbox{is} & \mbox{separable,} \\
%vK(\cT) & = & vK \oplus \bigoplus_{1\le i \le r} \bZ vx_i, \;\;\;\;\mbox{and}\\
%K(\cT)v & = & Kv(y_1v, \ldots, y_sv).
%\end{eqnarray*}
%In particular, if $(F|K, v)$ is a function field without transcendence defect, then $F|K(\cT)$ is finite and hence $\cT$ is a separating transcendence basis of $F|K$. Additionally, if $Fv|Kv$ is separable and $vF/vK$ is torsion free, then we can choose $\cT$ such that $Fv|K(\cT)v$ is separable and finite, and $vK(\cT) = vF$.
%\end{thm}

\begin{thm}		\label{sepdefextn}
Let $(F|K, v)$ be a separable valued function field without transcendence defect with r.r. $vF/vK = r$ and $\trdeg Fv|Kv = s$. Then there is a transcendence basis $\cT = \{x_1, \ldots, x_r, y_1, \ldots, y_s\}$ of $F|K$ such that $F|K(\cT)$ is separable, with
\begin{eqnarray*}
vK(\cT) & = & vK \oplus \bigoplus_{1\le i \le r} \bZ vx_i, \;\;\;\;\mbox{and}\\
K(\cT)v & = & Kv(y_1v, \ldots, y_sv).
\end{eqnarray*}
If, in addition, $vF/vK$ is torsion free and $Fv|Kv$ is separable, then we can choose $\cT$ which additionally satisfies that $vK(\cT) = vF$ and $Fv|K(\cT)v$ is separable.
\end{thm}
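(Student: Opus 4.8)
The plan is to build the transcendence basis $\cT$ incrementally, separating the "value group" directions from the "residue field" directions, and then to clean up purely inseparable defects at the end. First I would pick elements $x_1,\dots,x_r\in F$ whose values $vx_1,\dots,vx_r$ form a maximal set of elements of $vF$ that are $\bQ$-linearly independent modulo $vK$; since the r.r.\ of $vF/vK$ is $r$, such elements exist, and standard valuation theory (the argument behind \eqref{wtdgeq}, i.e.\ \cite[Corollary 2.3]{FVK_TF}) shows that $x_1,\dots,x_r$ are algebraically independent over $K$ and that the extension $K(x_1,\dots,x_r)|K$ is without transcendence defect, with $vK(x_1,\dots,x_r)=vK\oplus\bigoplus\bZ vx_i$ and residue field $Kv$. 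Next, working over $K':=K(x_1,\dots,x_r)$, I would pick $y_1,\dots,y_s\in\cO_F$ whose residues $y_1v,\dots,y_sv$ form a separating transcendence basis of $Fv|Kv$ — this uses that $Fv|Kv$ has transcendence degree $s$ and, in the case where $Fv|Kv$ is assumed separable, that a separating transcendence basis exists; in the general case one only asks that $y_1v,\dots,y_sv$ be algebraically independent over $Kv$. Then $y_1,\dots,y_s$ are algebraically independent over $K'$, they contribute nothing to the value group, and $K'(y_1,\dots,y_s)v=Kv(y_1v,\dots,y_sv)$. Because $F|K$ is without transcendence defect and has the right transcendence degree $r+s$, the set $\cT=\{x_1,\dots,x_r,y_1,\dots,y_s\}$ is automatically a transcendence basis of $F|K$, and by construction $vK(\cT)=vK\oplus\bigoplus\bZ vx_i$ and $K(\cT)v=Kv(y_1v,\dots,y_sv)$.

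The remaining issue is separability of $F|K(\cT)$, and this is the step I expect to be the main obstacle. A priori $F|K$ is separable, but a chosen transcendence basis need not be separating; one has to exploit the freedom in choosing the $x_i$ and $y_j$. The key point is that $F|K(\cT)$ is a finite extension (as $F|K$ is finitely generated of transcendence degree $r+s$), so it decomposes as a separable part over a purely inseparable part, and the purely inseparable part sits inside $K(\cT)^{1/p}\cap F$. Since $F|K$ is separable, $F$ is linearly disjoint from $K^{1/p^\infty}$ over $K$; the standard maneuver (replacing some $x_i$ by $x_i+f$ or $y_j$ by $y_j+g$ with suitable $f,g\in F$, or more systematically using the criterion that $\cT$ is separating iff $dx_1,\dots,dx_r,dy_1,\dots,dy_s$ are $F$-linearly independent in $\Omega_{F|K}$) lets one perturb $\cT$ so that no nonzero element of $F$ becomes a $p$-th power over $K(\cT)$. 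One must check that these perturbations can be done while preserving the value-group and residue-field normalizations already achieved — e.g.\ perturbing $x_i$ by an element of strictly larger value, or $y_j$ by an element of positive value, leaves $vx_i$ and $y_jv$ unchanged. This is exactly the kind of "respecting given data" refinement the authors emphasize, so I would expect the proof to carry out these perturbations carefully rather than invoke a black box.

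Finally, under the additional hypotheses that $vF/vK$ is torsion free and $Fv|Kv$ is separable, I would push the construction further to get $vK(\cT)=vF$ and $Fv|K(\cT)v$ separable. Torsion-freeness of $vF/vK$ together with the fact that $vK(\cT)/vK$ is already the free group on $vx_1,\dots,vx_r$ of full rational rank means $vF/vK(\cT)$ is a finite torsion group; by choosing the $x_i$ to represent actual coset representatives of a $\bZ$-basis of the (now finitely generated torsion-free, hence free) quotient lattice — i.e.\ choosing $vx_i$ to generate $vF$ over $vK$, not merely a finite-index subgroup — one arranges $vK(\cT)=vF$. Similarly, separability of $Fv|Kv$ lets us take $y_1v,\dots,y_sv$ to be a genuine separating transcendence basis with $Fv|Kv(y_1v,\dots,y_sv)$ separable, whence $Fv|K(\cT)v$ is separable. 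The only subtlety is that these stronger choices must be made compatibly with the separability perturbation from the previous paragraph; since the perturbations there only change $x_i,y_j$ by elements that do not affect $vx_i$ or $y_jv$, the two refinements do not conflict, and one obtains the desired $\cT$.
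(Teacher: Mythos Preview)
Your proposal is correct and follows essentially the same approach as the paper: choose elements with prescribed values and residues, then perturb them to achieve separability of $F|K(\cT)$ while preserving those value and residue data, and for the refined version pick $vx_i$ generating the free abelian group $vF/vK$ and $y_jv$ a separating transcendence basis of $Fv|Kv$. The paper organizes the perturbation step as an inductive lemma (adding one transcendence-basis element at a time) and makes it precise via the $p$-basis criterion that $F|K(t)$ is separable iff $t\notin F^p(\cB)$ for $\cB$ a $p$-basis of $K$: when the naive choice $z$ fails, one replaces it by $z+ct_1$ with $t_1$ an element of a separating transcendence basis of $F|K$ (so $t_1\notin F^p(\cB)$) and $c\in F^p$ chosen so that $v(ct_1)$ is large enough to leave $vz$ or $zv$ unchanged.
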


The second fundamental theorem, originally proved in \cite{FVK_thesis}, is a structure theorem for separable immediate valued function fields over separably tame fields.

\begin{thm}		\label{stt3}
Take an immediate function field $(F|K, v)$ of transcendence degree 1. Assume that $(K, v)$ is a separably tame field and $F|K$ is separable. Then
\begin{equation}
\mbox{there is $x\in F$ such that }\; (F^h, v)\,=\,(K(x)^h, v)\;.
\end{equation}
%that is, $(F|K, v)$ is henselian generated.
\end{thm}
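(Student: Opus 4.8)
The plan is to find the element $x$ inside $F$ that generates $F^h$ over $K^h$ by exploiting the fact that an immediate extension of transcendence degree $1$ must be, in a precise sense, ``one-dimensional'' in its valuation-theoretic data. First I would pick any transcendence basis $\{t\}$ of $F|K$, so $F$ is a finite separable extension of $K(t)$. Since the extension $(F|K,v)$ is immediate, both $vF=vK$ and $Fv=Kv$; in particular $t$ is transcendental and yet contributes nothing to value group or residue field. The standard mechanism here is the theory of pseudo-Cauchy sequences: because $(K,v)$ is separable-algebraically maximal (being separably tame, hence henselian, and in fact with no proper immediate separable-algebraic extensions), the value $v(t-c)$ for $c$ ranging over $K$ is bounded above but no maximum is attained unless $t\in K$, so $t$ is a pseudo-limit of a pseudo-Cauchy sequence $(c_\nu)$ in $K$ of transcendental type. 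I would then argue that one can replace $t$ by a better generator: an element $x\in F$ that is a pseudo-limit, in $F$, of that same pseudo-Cauchy sequence, but chosen so that $K(x)^h=K(x^h)$ already captures all of $F^h$.

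The key technical input is Theorem~\ref{sepdefextn} (or rather the relative-algebraic-closure phenomena that it and the cited results from \cite{FVK_TF} encapsulate), which lets me control the valued function field $K(x)$ so that $(K(x)^h,v)$ is again separably tame (tame, henselian, defectless) with the right value group and residue field; combined with the fact that $F^h|K(x)^h$ would then be an immediate \emph{algebraic} extension of a separably defectless henselian field, forcing it to be trivial as soon as it is separable, and separable it is since $F|K$ is separable and $K(x)$ is chosen inside a separably generated tower. Concretely, the steps are: (1) reduce to $F$ finite separable over a rational function field $K(t)$; (2) use separable-algebraic maximality of $(K,v)$ to realize $t$ (after a Möbius-type normalization if $v t\le 0$) as a pseudo-limit of a pseudo-Cauchy sequence $(c_\nu)_{\nu<\lambda}$ in $K$ without a limit in $K$, necessarily of transcendental type; (3) show the breadth of this sequence and the associated ``approximation'' data are already visible inside $F$, producing a single $x\in F$ with $v(x-c_\nu)$ strictly increasing and cofinal in the relevant cut, such that $K(x)\subseteq F$ and $F|K(x)$ is algebraic and separable; (4) verify $(K(x)^h,v)$ is separably tame, so separably defectless, hence $F^h|K(x)^h$ — being immediate, algebraic, and separable — is trivial, giving $F^h=K(x)^h$.

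The main obstacle, and the place where the separably-tame hypothesis (as opposed to merely separable-algebraically maximal Kaplansky) does real work, is step (3)–(4): because maximal immediate extensions of separably tame fields are \emph{not} unique, I cannot simply appeal to uniqueness of a maximal immediate extension to identify $F^h$ with a concrete object built over $K(x)^h$. Instead I have to produce the generator $x$ by a direct construction — tracking a pseudo-Cauchy sequence and its breadth through the separable tower $F|K(t)$ — and then \emph{prove} that $K(x)^h$ is already separably defectless, using the tameness of $K$ together with the structure results of Theorem~\ref{sepdefextn} rather than an abstract maximality argument. Showing that the finitely many algebraic steps from $K(x)$ up to $F$ collapse after henselization is where the defectlessness of tame fields, and the transfer of that property to $K(x)^h$, is essential; this is precisely the point flagged in the introduction as making the separably tame case ``much harder'' than the Kaplansky case.
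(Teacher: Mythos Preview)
The paper does not prove this theorem; it is quoted from \cite{FVK_thesis} as one of the two deep valuation-theoretic inputs on which the model-theoretic results rest (see the paragraph preceding the statement). So there is no in-paper proof to compare your proposal against, and I can only assess the proposal on its own terms.

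Your argument has a genuine gap at step~(4). You want to verify that $(K(x)^h,v)$ is separably tame, hence separably defectless, and then conclude that the immediate separable-algebraic extension $F^h|K(x)^h$ is trivial. But $(K(x)^h,v)$ is essentially never separably tame. By Lemma~\ref{septame}, a nontrivially valued separably tame field is separable-algebraically maximal, and the henselization of a proper immediate transcendental extension of $K$ typically admits proper immediate separable-algebraic extensions (for instance Artin--Schreier extensions built from pseudo-Cauchy sequences in $K(x)$ that have no pseudo-limit in $K(x)^h$). Indeed, if $(K(x)^h,v)$ were separably tame for \emph{every} such $x$, the theorem would be an immediate triviality and would not be singled out as a ``fundamental theorem'' requiring a separate reference. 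Your appeal to Theorem~\ref{sepdefextn} is also misplaced: that result is about valued function fields \emph{without} transcendence defect and produces a separating standard valuation transcendence basis; here $(F|K,v)$ is immediate, hence has \emph{maximal} transcendence defect, and Theorem~\ref{sepdefextn} gives no information whatsoever about this situation, let alone about defectlessness of $K(x)^h$.

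The content of the theorem is precisely that one can choose $x\in F$ so that the a~priori possible defect in $F^h|K(x)^h$ does not materialize; this cannot be deduced from any global defectlessness property of $K(x)^h$, because no such property holds. The argument in \cite{FVK_thesis} (and its tame-field analogue in \cite{FVK_TF}) controls the potential defect of the finite separable step $F^h|K(x)^h$ \emph{indirectly}, using the separable tameness of $K$ itself --- via the characterization $K^r=K^{\sep}$ and a careful case analysis of what a minimal counterexample would look like over a well-chosen $K(x)^h$ --- rather than by asserting that $K(x)^h$ enjoys any defectlessness of its own. Your outline correctly identifies the shape of the object one is after and the role of pseudo-Cauchy sequences in locating $x$, but it does not yet contain the mechanism that actually forces $[F^h:K(x)^h]=1$.
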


\section{Preliminaries}
\subsection{Valuation Theoretical Preliminaries}

%\begin{thm}\cite[Theorem 2.9]{FVK_TF}		\label{hensepdef}
%Take a valued field $(K, v)$ and fix an extension of $v$ to the algebraic closure $K^{\alg}$ of $K$. Then $(K, v)$ is separably defectless if and only if its henselization $(K, v)^h$ in $(K^{\alg}, v)$ is separably defectless.
%\end{thm}

We will denote the algebraic closure of a field $K$ by $K^{\alg}$. Whenever we have a valuation $v$ on $K$, we will automatically fix an extension of $v$ to the algebraic closure $K^{\alg}$ of $K$. It does not play a role which extension we choose, except if $v$ is also given on an extension field $L$ of $K$; in this case, we choose the extension to $K^{\alg}$ to be the restriction of the extension to $L^{\alg}$. We say that $v$ is {\bf trivial} on $K$ if $vK = \{0\}$; otherwise $(K, v)$ is said to be {\bf nontrivially valued}. If the valuation $v$ of $L$ is trivial on the subfield $K$, then we may assume that $K$ is a subfield of $Lv$ and the residue map $K\ni a\mapsto av$ is the identity.

We will denote by $K^{\sep}$ the separable-algebraic closure of $K$, and by $K^{1/p^{\infty}}$ its perfect hull. If $\Gamma$ is an ordered abelian group and $p$ a prime, then we write $\frac{1}{p^{\infty}}\Gamma$ for the $p$-divisible hull of $\Gamma$, endowed with the unique extension of the ordering from $\Gamma$. We leave the easy proof of the following lemma to the reader.

\begin{lem}		\label{sep_perf_hull}
If $K$ is an arbitrary field and $v$ is a valuation on $K^{\sep}$, then $vK^{\sep}$ is the divisible hull of $vK$, and $(Kv)^{\sep}\subseteq K^{\sep}v$. If, in addition, $v$ is nontrivial on $K$, then $K^{\sep}v$ is the algebraic closure of $Kv$.

Every valuation $v$ on $K$ has a unique extension to $K^{1/p^\infty}$, and it satisfies $vK^{1/p^\infty} = \dfrac{1}{p^{\infty}}vK$ and $K^{1/p^\infty}v = (Kv)^{1/p^\infty}$.
\end{lem}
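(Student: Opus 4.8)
The statement has three parts, each of which should follow from standard valuation-theoretic facts. For the first claim, that $vK^{\sep}$ is the divisible hull of $vK$: since $K^{\sep}|K$ is algebraic, the fundamental inequality forces $vK^{\sep}/vK$ to be torsion, so $vK^{\sep}$ is contained in the divisible hull of $vK$. For the reverse inclusion I would argue that $vK^{\sep}$ is divisible: given $\gamma = va \in vK$ with $a \in K^\times$ and $n \geq 1$, I want $\gamma/n \in vK^{\sep}$. Pick a prime $\ell$ and reduce to the case $n = \ell$. If $\ell \neq \operatorname{char} K$, then $X^\ell - a$ is separable, so any root $b$ lies in $K^{\sep}$ and has value $\gamma/\ell$. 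If $\ell = p = \operatorname{char} K$, one instead uses an Artin–Schreier-type argument: the value group of $K^{\sep}$ is $p$-divisible because, after passing to the henselization (which is immediate and separable), the maximal tamely ramified extension already makes the value group $p'$-divisible for every prime $p' \neq p$, and $p$-divisibility of the value group of the separable closure is exactly \cite[Lemma 2.17]{FVK_TF}-style content — alternatively, one notes $K^{\sep}v$ is algebraically closed (proved next) forces, together with \eqref{fundineq} applied to finite subextensions, the value group to absorb $p$-th roots via defectless tame pieces. I expect this $p$-divisibility to be the one point that genuinely needs a cited result rather than a one-line argument.

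For $(Kv)^{\sep} \subseteq K^{\sep}v$: the residue map $\cO_{K^{\sep}} \to K^{\sep}v$ is surjective onto the residue field, and I would lift a separable polynomial over $Kv$ to a polynomial over $\cO_K$ of the same degree, use that $K^{\sep}$ contains all its roots (as the lifted polynomial is separable over $K$, or can be arranged to be so after a small perturbation not affecting residues), and check the residues of those roots exhaust the roots of the original polynomial over $Kv$; hence every element of $(Kv)^{\sep}$ is a residue of an element of $K^{\sep}$. For the strengthening when $v$ is nontrivial on $K$: here I would invoke that the algebraic closure of $K$ has divisible value group and algebraically closed residue field, and that $K^{\alg}|K^{\sep}$ is purely inseparable, so $(K^{\alg})v$ is a purely inseparable extension of $K^{\sep}v$; since $K^{\sep}v$ already contains $(Kv)^{\sep}$ and, by nontriviality, $vK \neq \{0\}$ makes the residue extension $K^{\sep}v|Kv$ actually algebraic and ... — the cleanest route is: $K^{\sep}v$ is perfect (a separably closed field need not be, but here one uses nontriviality to find enough $p$-th roots in the residue field via the value group being divisible), and $K^{\sep}v \supseteq (Kv)^{\sep}$ together with perfectness and algebraicity over $Kv$ gives $K^{\sep}v = (Kv)^{\alg}$.

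For the last paragraph, uniqueness of the extension of $v$ to $K^{1/p^\infty}$ is because $K^{1/p^\infty}|K$ is purely inseparable, hence each element has a unique conjugate, forcing a unique valuation; $vK^{1/p^\infty} = \frac{1}{p^\infty}vK$ since $(a^{1/p^k})$ has value $\frac{1}{p^k}va$ and these generate the $p$-divisible hull; and $K^{1/p^\infty}v = (Kv)^{1/p^\infty}$ since residues of $p^k$-th roots are $p^k$-th roots of residues, and conversely every $p^k$-th root in $Kv$ is hit — here I would note that since the residue characteristic is $p$, a $p^k$-th root in the residue field of $av$ is the residue of $a^{1/p^k}$ (the extension is immediate in value group and residue field contributions at each stage, by Ostrowski). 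The main obstacle throughout is organizing the characteristic-$p$ / residue-characteristic-$p$ cases so that one correctly cites the $p$-divisibility and algebraic-closedness facts rather than silently assuming them; modulo those citations, everything is routine, which is why the lemma is stated with its proof left to the reader.
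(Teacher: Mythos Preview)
The paper supplies no proof of this lemma; it explicitly leaves it to the reader. So there is nothing to compare against except whether your plan actually works as a routine verification.

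Your outline is broadly correct, but you over-complicate precisely the step you flag as needing a citation. The $p$-divisibility of $vK^{\sep}$ in characteristic $p$ does \emph{not} require ramification theory or an external reference: given $a \in K^{\sep}$ with $va > 0$ (the case $va<0$ is symmetric, $va=0$ is trivial), the polynomial $X^p - X - a^{-1}$ is separable, and any root $b \in K^{\sep}$ satisfies $v(b^p - b) = -va < 0$, which by the ultrametric forces $vb<0$ and $p\,vb = -va$. That is the Artin--Schreier trick you name before veering off into henselizations and tame ramification; it is a two-line argument, not a cited result.

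The same trick disposes of the perfectness of $K^{\sep}v$ under the nontriviality hypothesis, which you leave vague. Given $\bar a \in K^{\sep}v$, lift to $a$ with $va = 0$, pick $c \in K$ with $vc > 0$ (this is where nontriviality enters), and let $b \in K^{\sep}$ be a root of the separable polynomial $X^p - cX - a$; one checks $vb = 0$ and $(bv)^p = av = \bar a$. Combined with $(Kv)^{\sep} \subseteq K^{\sep}v$ --- for which your lifting argument is fine once you note that $K^{\sep}$ is henselian, its only algebraic extensions being purely inseparable --- this gives $K^{\sep}v = (Kv)^{\alg}$. Your plan for the perfect-hull paragraph is correct as stated.
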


If $(L|K, v)$ is an extension of valued fields, then a transcendence basis $\cT$ of $L|K$ will be called a {\bf standard valuation transcendence basis} of $(L, v)$ over $(K, v)$ if $\cT = \{x_i, y_j \mid i\in I, j\in J\}$ where the values $\{vx_i\mid i\in I\}$ form a maximal set of values in $vL$ rationally independent over $vK$, and the residues $\{y_jv\mid j\in J\}$ form a transcendence basis of $Lv|Kv$. We then have

\begin{cor}\cite[Corollary 2.4]{FVK_TF}			\label{svtb->wtd}
If a valued field extension admits a standard valuation transcendence basis, then it is an extension without transcendence defect.
\end{cor}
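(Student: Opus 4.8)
The plan is to squeeze the inequality (\ref{wtdgeq}) to an equality by combining a cardinality count on the given transcendence basis with the a priori bound (\ref{wtdgeq}) itself. Write the standard valuation transcendence basis as a disjoint union $\cT=\cT_X\cup\cT_Y$, where $\cT_X=\{x_i\mid i\in I\}$ consists of the elements whose values $vx_i$ are rationally independent over $vK$, and $\cT_Y=\{y_j\mid j\in J\}$ consists of the elements whose residues $y_jv$ form a transcendence basis of $Lv|Kv$ (in particular $vy_j=0$ for all $j$, as otherwise $y_jv$ could not lie in a transcendence basis of $Lv|Kv$; note also that these two subfamilies are disjoint, a value rationally independent over $vK$ being nonzero). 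Rational independence of the $vx_i$ over $vK$ means their images in $\bQ\otimes vL/vK$ are $\bQ$-linearly independent, so $|I|\le\dim_\bQ \bQ\otimes vL/vK$; algebraic independence of the $y_jv$ over $Kv$ gives $|J|\le\trdeg Lv|Kv$.

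I would first treat the case $\trdeg L|K<\infty$. Then $\cT$ is finite and, being a transcendence basis of $L|K$, satisfies $\trdeg L|K=|\cT|=|I|+|J|$. Hence
\[
\trdeg L|K \;=\; |I|+|J| \;\le\; \dim_\bQ \bQ\otimes vL/vK + \trdeg Lv|Kv \;\le\; \trdeg L|K
\]
by (\ref{wtdgeq}), so all three quantities coincide; equality in (\ref{wtdgeq}) is precisely the statement that $(L|K,v)$ is without transcendence defect.

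For the general case, let $(L_0|K,v)$ be any subextension of finite transcendence degree, with transcendence basis $a_1,\dots,a_n$ of $L_0|K$; I must show $(L_0|K,v)$ is without transcendence defect. Each $a_k\in L$ is algebraic over $K(\cT)$, hence over $K(\cT')$ for some finite subset $\cT'\subseteq\cT$, and we may take one $\cT'$ that works for all $k$; then every element of $L_0$ is algebraic over $K(\cT')$, so $L_0\subseteq L_1$, where $L_1$ denotes the relative algebraic closure of $K(\cT')$ in $L$. Now $\cT'$ is a finite transcendence basis of $L_1|K$, the values on $\cT'\cap\cT_X$ remain rationally independent over $vK$, and the residues on $\cT'\cap\cT_Y$ remain algebraically independent over $Kv$; since the finite case above used only these independence properties, it applies to $(L_1|K,v)$, which is therefore without transcendence defect. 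Finally, apply (\ref{wtdgeq}) to the two steps of the tower $K\subseteq L_0\subseteq L_1$ and add the resulting inequalities; using additivity of $\trdeg$ and of rational rank along towers, the two right-hand sides sum to $\trdeg L_1v|Kv+\dim_\bQ \bQ\otimes vL_1/vK=\trdeg L_1|K$, so both inequalities must be equalities, and in particular $(L_0|K,v)$ is without transcendence defect.

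I do not expect a serious obstacle: the statement reduces to the cardinality identity $\trdeg L|K=|\cT_X|+|\cT_Y|$ together with the a priori inequality (\ref{wtdgeq}). The only mildly delicate point is the infinite-transcendence-degree case, where one has to extract from $\cT$ a finite transcendence basis that controls a given finite-transcendence-degree subextension and then propagate the equality down the tower.
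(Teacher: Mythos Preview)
Your argument is correct. The paper does not give its own proof of this statement; it simply cites \cite[Corollary~2.4]{FVK_TF}. What you have written is essentially the standard proof, and it is the one a reader would reconstruct from the definitions together with the inequality~(\ref{wtdgeq}).

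Two minor remarks. First, in the finite case you can shorten the argument slightly: by the definition of a standard valuation transcendence basis, the $vx_i$ form a \emph{maximal} rationally independent set over $vK$ and the $y_jv$ form a transcendence basis of $Lv|Kv$, so in fact $|I|=\dim_\bQ \bQ\otimes vL/vK$ and $|J|=\trdeg Lv|Kv$ directly, and the squeeze via~(\ref{wtdgeq}) is not even needed there. However, your version using only independence (not maximality) is exactly what is required in the infinite case when passing to the finite subset $\cT'$, since maximality need not survive restriction; so the way you set it up is the right one. Second, your tower argument for pushing the equality from $L_1$ down to $L_0$ relies on additivity of rational rank, which follows from the exactness of $0\to\bQ\otimes vL_0/vK\to\bQ\otimes vL_1/vK\to\bQ\otimes vL_1/vL_0\to 0$; this is routine but worth a word if you are being careful.
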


As mentioned in the Introduction, a defectless (respectively, separably defectless) field is a valued field for which all finite (respectively, all finite and separable) extensions are defectless, i.e., satisfy equality in the fundamental inequality (\ref{fundineq}). We will be using the following properties of defectless and separably defectless fields in this paper. The details can be found in \cite{FVK_ER}, \cite{FVK_TF}.
\begin{lem}		\label{defless}
The following are some properties of defectless and separably defectless fields.
\begin{itemize}
\item Every trivially valued field $(K, v)$ is defectless.
\item Every valued field $(K, v)$ with char $Kv = 0$ is a defectless field.
\item Every finite algebraic extension of a defectless field is again a defectless field.
\item Every finite separable-algebraic extension of a separably defectless field is again separably defectless.
\end{itemize}
\end{lem}

As mentioned in the Introduction, all algebraically maximal and separable-algebraically maximal fields are henselian because the henselization is an immediate separable-algebraic extension and therefore these fields must coincide with their henselization. Every henselian defectless (respectively, separably defectless) field is algebraically maximal (respectively, separable-algebraically maximal). However, the converse is not true in general: algebraically maximal (respectively, separable-algebraically maximal) fields need not be defectless (respectively, separably defectless). An additional condition is needed, see \cite{FVK_AS}. Note that for a valued field of residue characteristic 0, ``henselian'', ``algebraically maximal'', ``separable-algebraically maximal'', ``henselian defectless'' and ``henselian separably defectless'' are all equivalent. We will need the following characterization of separable-algebraically maximal fields; cf.\ Theorems 1.6 and 1.8 of \cite{FVK_AS}.
\begin{thm}		\label{exmaxvfth}
A valued field $(K,v)$ is separable-algebraically maximal if and only if it is henselian and for every separable polynomial $f\in K[X]$,
\begin{equation*}		\label{exmaxvf}
\exists x\in K\, \forall y\in K\>\Big(vf(x)\geq vf(y)\Big).
\end{equation*}
\end{thm}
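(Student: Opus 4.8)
The plan is to prove both implications through Kaplansky's theory of pseudo-Cauchy sequences, exploiting the correspondence between proper immediate separable-algebraic extensions of $(K,v)$ and pseudo-Cauchy sequences in $K$ of algebraic type, without pseudo-limit in $K$, whose associated minimal polynomial is separable.

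First the direction ``$\Rightarrow$''. A separable-algebraically maximal field is henselian, since the henselization is an immediate separable-algebraic extension and so must coincide with $(K,v)$. Suppose the polynomial condition fails and choose, among all separable $f\in K[X]$ for which $\{vf(x):x\in K\}$ has no maximum, one of least degree $n$; pick $a_\nu\in K$ with $vf(a_\nu)$ strictly increasing and cofinal in this set. Using the minimality of $n$ one shows, by the classical argument, that $(a_\nu)$ is a pseudo-Cauchy sequence of algebraic type and that (after replacing $f$ by an irreducible factor if necessary) $f$ is its minimal polynomial. It has no pseudo-limit in $K$: since $f$ has no zero in $K$, the set $\{vf(x):x\in K\}$ does not contain $\infty$, so a pseudo-limit $b\in K$ would give $vf(b)>vf(a_\nu)$ for all large $\nu$, against cofinality. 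Adjoining a pseudo-limit $a$ then yields an immediate extension $K(a)|K$ in which $a$ is a root of $f$; as $f$ is separable, $K(a)|K$ is a proper immediate separable-algebraic extension, contradicting separable-algebraic maximality.

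Now ``$\Leftarrow$''. Assume the maximization condition and suppose, towards a contradiction, that $(K,v)$ has a proper immediate separable-algebraic extension $(K(a)|K,v)$ with $a\notin K$ and minimal polynomial $f\in K[X]$; then $f$ is separable and irreducible of degree $\geq 2$, so it has no zero in $K$. Because $K(a)|K$ is immediate, the set $\{v(a-c):c\in K\}$ has no largest element, so we may pick $a_\nu\in K$ with $v(a-a_\nu)$ strictly increasing and cofinal in it; then $(a_\nu)$ is a pseudo-Cauchy sequence with pseudo-limit $a$. Expanding $f$ at $a$ by Hasse--Schmidt derivatives and using $f'(a)\neq 0$ (separability), a standard computation gives $vf(a_\nu)=vf'(a)+v(a-a_\nu)$ for large $\nu$, which is strictly increasing and cofinal in $\{vf(x):x\in K\}$. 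Since $f$ never takes the value $\infty$ on $K$, this contradicts the hypothesis.

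The step I expect to be the main obstacle is the control of separability in positive characteristic inside ``$\Rightarrow$'': the minimality argument must be organized entirely within the class of separable polynomials, and one must verify that the resulting pseudo-Cauchy sequence has a \emph{separable} associated minimal polynomial, so that adjoining a pseudo-limit produces a separable-algebraic — not merely algebraic — immediate extension; without this care the construction would only contradict algebraic maximality. This is precisely the content of the pseudo-Cauchy sequence lemmas of \cite{FVK_AS}, which I would invoke rather than reprove. One may also note that the maximization condition for separable polynomials, applied to $K^h|K$, already forces henselianity, so that hypothesis in the statement is in fact redundant.
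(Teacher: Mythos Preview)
The paper does not actually prove this theorem: it is quoted verbatim as Theorems~1.6 and~1.8 of \cite{FVK_AS}, with no argument given. Your pseudo-Cauchy approach is exactly the standard route and is, in outline, what \cite{FVK_AS} does; you even anticipate citing that paper for the delicate separability bookkeeping in the ``$\Rightarrow$'' direction, which is appropriate.

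One small point to tighten in your ``$\Leftarrow$'' sketch: from ``$vf(a_\nu)=vf'(a)+v(a-a_\nu)$ is strictly increasing'' alone you cannot yet contradict the existence of a maximizer $b\in K$, since a strictly increasing sequence can still be bounded above by a single value $vf(b)$. The missing step is to compare $b$ with the $a_\nu$: since $v(a-a_\nu)$ is cofinal in $\{v(a-c):c\in K\}$ and $b\in K$, choose $\nu$ with $v(a-a_\nu)>v(a-b)$; the Taylor/Newton-polygon expansion of $f$ at $a$ then gives $vf(a_\nu)=vf'(a)+v(a-a_\nu)>vf'(a)+v(a-b)\geq vf(b)$, the last inequality because the linear term already bounds the minimum over all Hasse-derivative terms at $b$. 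That yields the contradiction. With this adjustment your argument is correct and coincides with the approach of \cite{FVK_AS} that the paper invokes.
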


We will assume the reader to be familiar with the theory of pseudo-Cauchy sequences as developed in \cite{IK}. Recall that a pseudo-Cauchy sequence $\{x_\alpha\}_{\alpha < \lambda}$ (with $\lambda$ a limit ordinal) in $(K, v)$ is of {\bf transcendental type} if it fixes the value of every polynomial $f \in K[X]$, that is, $\{vf(x_\alpha)\}$ is constant for all large enough $\alpha < \lambda$. And $\{x_\alpha\}_{\alpha < \lambda}$ is of {\bf algebraic type} if there is a polynomial $f\in K[X]$ and $\alpha_0 < \lambda$ such that $\{vf(x_\alpha)\}_{\alpha_0\le\alpha < \lambda}$ is strictly increasing. In the later case, there is a polynomial $f\in K[X]$ of minimal degree with the said property. Such a polynomial is called a {\bf minimal polynomial} of $\{x_\alpha\}_{\alpha < \lambda}$.

\subsection{Algebraic Preliminaries}		\label{algprelim}

We will use the notion of a $p$-basis in the proof of Theorem~\ref{sepdefextn}. So, let us first define the notions of ``$p$-degree'' and ``$p$-basis'' and describe their relations to (not necessarily algebraic) separable extensions. For further details, consult \cite{MFMJ}, \cite{NJ}, \cite{SL_AG}, \cite{SL}.

Let $K\subseteq L$ be fields of characteristic $p > 0$. We say that $L$ is a {\bf separable extension} of $K$ if the extensions $L|K$ and $K^{1/p}|K$ are linearly disjoint. If $L$ is algebraic over $K$, this is equivalent to $L\subseteq K^{\sep}$. In characteristic zero, every field extension is separable. Some of the algebraic properties satisfied by separable extensions are as follows. In this paper, we denote the field compositum of two fields $L$ and $E$ by $LE$.
\begin{thm}		\label{sepextns}
Separable extensions have the following algebraic properties:
\begin{enumerate}
\item If $K$ is perfect, then every extension $L|K$ is separable.
\item If $E|K$ and $L|E$ are separable extensions, then so is $L|K$.
\item If $L|K$ is separable and $E|K$ is a subextension of $L|K$, then $E|K$ is also separable.
\item If $L|K$ and $E|K$ are linearly disjoint, then $L|K$ is separable if and only if $LE|E$ is separable.
\item Every finitely generated subextension $F|K$ of a separable extension $L|K$ is {\bf separably generated}, i.e., there is a transcendence basis $\{t_1, \ldots, t_n\}$ of $F|K$ such that $F|K(t_1, \ldots, t_n)$ is separable-algebraic. Such a transcendence basis is called a {\bf separating transcendence basis}. Moreover, such a transcendence basis can be chosen from any set of generators.
\end{enumerate}
\end{thm}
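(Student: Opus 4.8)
The plan is to route all five statements through the equivalent forms of \emph{MacLane's separability criterion}. For an extension $L|K$ with $K$ of characteristic $p>0$, I would first prove the equivalence of: (i) $L|K$ and $K^{1/p}|K$ are linearly disjoint (the given definition); (ii) $L|K$ and $K^{1/p^{\infty}}|K$ are linearly disjoint; (iii) every finitely generated subextension of $L|K$ is separably generated; (iv) $L\otimes_K K^{1/p^{\infty}}$ is reduced, equivalently $L\otimes_K K'$ is reduced for \emph{every} field extension $K'|K$. When $K$ has characteristic $0$ there is nothing to prove, since by convention every extension is separable; so assume $p>0$. The one extra tool is the \emph{tower lemma} for linear disjointness: for $K\subseteq E\subseteq L$ and $K\subseteq M$ inside a common overfield, $L$ and $M$ are linearly disjoint over $K$ iff $E$ and $M$ are linearly disjoint over $K$ and $L$ and $EM$ are linearly disjoint over $E$.

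The substantive step is (i)$\Rightarrow$(ii), which I would prove directly rather than via the tower lemma. Let $\{u_i\}$ be a family in $L$ linearly independent over $K$; I claim it stays linearly independent over $K^{1/p^{n}}$ for all $n$, which gives (ii) in the limit. The case $n=1$ is (i). For $n\to n+1$, one first notes that $\{u_i^{p}\}$ is again linearly independent over $K$: a relation $\sum a_i u_i^{p}=0$ with $a_i\in K$ would, on extracting $p$-th roots, give $\sum a_i^{1/p}u_i=0$ with $a_i^{1/p}\in K^{1/p}$, so (i) forces all $a_i=0$. By the inductive hypothesis $\{u_i^{p}\}$ is then linearly independent over $K^{1/p^{n}}$; and any relation $\sum c_i u_i=0$ with $c_i\in K^{1/p^{n+1}}$ becomes, after raising to the $p$-th power, $\sum c_i^{p}u_i^{p}=0$ with $c_i^{p}\in K^{1/p^{n}}$, hence trivial, so all $c_i=0$. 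Next, (ii)$\Leftrightarrow$(iii) is the classical exchange argument: for $F=K(z_1,\dots,z_m)$, take a maximal subset $S$ of the $z_j$ algebraically independent over $K$; if some $z_j$ is inseparable over $K(S)$, its minimal polynomial lies in $K(S)[X^p]$, and (ii) forces an element of $S$ to occur in the resulting relation with exponent prime to $p$, so it may be exchanged for $z_j$; iterating reduces the number of inseparable generators and terminates at a \emph{separating} transcendence basis, which by construction is a subset of $\{z_1,\dots,z_m\}$ --- this is exactly part~(5). Finally, (ii)$\Leftrightarrow$(iv) follows by identifying linear disjointness of $L$ and $M$ over $K$ with injectivity of $L\otimes_K M\to LM$ (so the source is then a domain, hence reduced), and noting conversely that an element of $L\otimes_K K^{1/p^{\infty}}$ in the kernel of the map to $L\cdot K^{1/p^{\infty}}$ is killed by a suitable $p$-th power, so a failure of injectivity produces a nonzero nilpotent.

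With the criterion available, the rest is short. Part (1): $K$ perfect gives $K^{1/p}=K$, so (i) is vacuous. Part (3): linear disjointness passes to subfields, so $L$ linearly disjoint from $K^{1/p}$ over $K$ forces the same for any intermediate $E$. Part (2): in form (ii), $E$ is linearly disjoint from $K^{1/p^{\infty}}$ over $K$, and $L$ is linearly disjoint from $E^{1/p^{\infty}}$, a fortiori from $EK^{1/p^{\infty}}$, over $E$, so the tower lemma yields $L$ linearly disjoint from $K^{1/p^{\infty}}$ over $K$. Part (5) is (iii) together with the refinement already obtained. For part (4): the forward direction uses form (iv) --- $L\otimes_K E^{1/p}$ is reduced, and since $L$ and $E$ are linearly disjoint over $K$ the ring $L\otimes_K E$ is a domain with fraction field $LE$, whence $LE\otimes_E E^{1/p}$ is a localization of $L\otimes_K E^{1/p}$, so it too is reduced and $LE|E$ is separable; the converse is the direct argument --- if $a_1,\dots,a_r\in L$ are linearly independent over $K$ but dependent over $K^{1/p}$, then linear disjointness of $L$ and $E$ over $K$ keeps them independent over $E$, while their $K^{1/p}$-relation is a fortiori an $E^{1/p}$-relation, so $LE|E$ fails separability too.

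I expect the main obstacle to lie entirely inside the proof of the criterion: keeping the induction for (i)$\Rightarrow$(ii) honest, and --- more delicately --- running the exchange process in (ii)$\Rightarrow$(iii) so that the separating transcendence basis is visibly drawn from the prescribed generating set (the clause ``from any set of generators'' in part~(5)). Once (i)--(iv) are known to be equivalent, each of (1)--(5) is a one- or two-line deduction, part~(4) being the only one that calls on two different arguments for its two directions. Since all of this is classical, one could instead simply cite \cite{SL}, \cite{NJ} or \cite{MFMJ}, but the self-contained route above is the one I would carry out.
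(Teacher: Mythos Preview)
The paper does not actually prove this theorem: it is stated in the algebraic preliminaries as a standard fact, with the surrounding text directing the reader to \cite{MFMJ}, \cite{NJ}, \cite{SL_AG}, \cite{SL} for details. So there is no ``paper's own proof'' to compare against; the authors are simply citing the classical literature.

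Your proposal is correct and is essentially the proof one finds in those references --- MacLane's criterion in its several equivalent forms, followed by short deductions of (1)--(5). The induction for (i)$\Rightarrow$(ii) is clean (the key observation that the inductive hypothesis applies to \emph{any} $K$-independent family in $L$, hence to $\{u_i^p\}$, is exactly what one needs), the exchange argument for (ii)$\Rightarrow$(iii) correctly produces a separating transcendence basis from a prescribed generating set, and your two-directional treatment of (4) via reducedness of tensor products in one direction and a direct linear-disjointness argument in the other is standard and sound. You yourself note at the end that one could simply cite the textbooks instead; that is precisely what the paper does.
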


Again let $K$ be a field of characteristic $p > 0$. Then the map $\eta: K\longrightarrow K^p: x\mapsto x^p$ is a field endomorphism. In particular, $K^p$ is a subfield of $K$, and thus $K$ is naturally a $K^p$-vector space. We say that $\cB\subseteq K$ is {\bf $p$-independent} in $K$ if the set $\cM$ of all monomials in $\cB$ of the form $b^{i_1}_1 \cdots b^{i_n}_n$, with $b_1, \ldots, b_n\in\cB$ and $0\le i_1, \ldots, i_n\le p - 1$, is linearly independent in the $K^p$-vector space $K$. If, furthermore, $\cM$ is a basis of the $K^p$-vector space $K$, then we call $\cB$ a {\bf $p$-basis} of $K$. $\cB$ is a $p$-basis of $K$ if and only if $\cB$ is a maximal $p$-independent subset of $K$. In particular,
\begin{enumerate}
\item Any $\cB\subseteq K$ is $p$-independent in $K$ if and only if for every $b\in\cB,\; b\not\in K^p[\cB\setminus\{b\}]$.
\item Any $p$-independent subset $\cB\subseteq K$ is a $p$-basis of $K$ if and only if $K^p[\cB] = K$.
\end{enumerate}
Any $p$-independent subset of $K$ extends to a $p$-basis of $K$, and any two $p$-bases of $K$ have the same cardinality, which by convention is an element of $\bN\cup\{\infty\}$. If $|\cB| = e\in\bN$, then $[K : K^p] = p^e$. The size of a $p$-basis $\cB$ of $K$ is called the {\bf $p$-degree} or the {\bf degree of imperfection} or the {\bf Ershov invariant} of $K$. It remains invariant under field isomorphism.

Analogously, if $E$ is a subfield of $K$, we say that $\cB$ is a {\bf $p$-basis of $K$ over $E$} if the set $M$ of all monomials in $\cB$ of the form $b^{i_1}_1\cdots b^{i_n}_n$ with $b_1, \ldots, b_n\in\cB$ and $0\le i_1, \ldots, i_n\le p - 1$, is a basis of the $EK^p$-vector space $K$. Then $K = EK^p[\cB]$.
\begin{thm}		\label{pindep}
The following conditions are equivalent:
\begin{enumerate}
\item[(1)] $L|K$ is separable.
\item[(2)] $L^p$ and $K$ are linearly disjoint over $K^p$.
\item[(3)] $L$ and $K^{1/p^\infty}$ are linearly disjoint over $K$.
\item[(4)] Any $p$-basis of $K$ remains $p$-independent in $L$.
\item[(4')] Some $p$-basis of $K$ remains $p$-independent in $L$.
\item[(5)] Any $p$-basis of $K$ is contained in a $p$-basis of $L$.
\item[(5')] Some $p$-basis of $K$ is contained in a $p$-basis of $L$.
\end{enumerate}
\end{thm}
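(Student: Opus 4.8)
The plan is to deduce the non-obvious equivalences from two standard facts: linear disjointness is preserved under field isomorphisms and is transitive along towers, and (for the passage to (3)) the stability of separability recorded in Theorem~\ref{sepextns}(4). I would organize $(1),(2),(4),(4'),(5),(5')$ into clusters and then bridge to $(3)$ at the end.

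First, $(1)\Leftrightarrow(2)$ is essentially a tautology after applying Frobenius: the map $x\mapsto x^p$ is a field isomorphism of the ambient field carrying the pair of subfields $(K,K^{1/p})$ onto $(K^p,K)$ and the pair $(K,L)$ onto $(K^p,L^p)$, and linear disjointness transports along field isomorphisms, so ``$L$ and $K^{1/p}$ linearly disjoint over $K$'' --- which \emph{is} (1) --- coincides with (2). Next, $(2)\Leftrightarrow(4)\Leftrightarrow(4')$: fix a $p$-basis $\cB$ of $K$, so that its set of monomials $\cM$ is a $K^p$-basis of $K$; by the standard criterion that two fields over a common subfield are linearly disjoint iff some (equivalently, every) basis of one over the subfield stays linearly independent over the other, statement (2) is equivalent to $\cM$ being $L^p$-linearly independent, which is exactly the assertion that $\cB$ is $p$-independent in $L$. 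Reading this over all $p$-bases of $K$ gives $(2)\Rightarrow(4)$; $(4)\Rightarrow(4')$ holds since $K$ has a $p$-basis; and $(4')\Rightarrow(2)$ is the criterion applied in reverse. (Infinite $p$-bases are harmless, as $L^p$-linear independence of $\cM$ is tested on finite subsets.) Finally, $(4)\Leftrightarrow(5)$ and $(4')\Leftrightarrow(5')$: a $p$-basis of $K$ that is $p$-independent in $L$ extends to a maximal $p$-independent subset of $L$, i.e.\ a $p$-basis of $L$, and conversely any subset of a $p$-basis of $L$ is $p$-independent in $L$ straight from the characterization $b\notin L^p[\cB\setminus\{b\}]$.

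It remains to weave in (3). The implication $(3)\Rightarrow(1)$ is immediate: $K^{1/p}$ is a subfield of $K^{1/p^\infty}$, and linear disjointness of $L$ from a field over $K$ descends to subfields. The implication $(1)\Rightarrow(3)$ is the step I expect to require real work, because it must control the whole purely inseparable tower $K^{1/p^\infty}$. Here I would first reduce to the finite levels: any witness to a failure of linear disjointness of $L$ and $K^{1/p^\infty}$ over $K$ is a single finite linear relation, hence already lives inside $L$ and $K^{1/p^n}$ for some $n$, so it suffices to prove that $L$ and $K^{1/p^n}$ are linearly disjoint over $K$ for every $n$. I would establish this in the stronger form ``\emph{if $M|E$ is separable, then $M$ and $E^{1/p^n}$ are linearly disjoint over $E$}'', by induction on $n$, uniformly in the base field. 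The case $n=1$ is the definition of separability. For the inductive step, use transitivity of linear disjointness along $E\subseteq E^{1/p}\subseteq E^{1/p^{n+1}}=(E^{1/p})^{1/p^{n}}$: the first factor is the $n=1$ case for $M|E$, and the second factor, linear disjointness of $ME^{1/p}$ and $(E^{1/p})^{1/p^{n}}$ over $E^{1/p}$, follows from the induction hypothesis once $ME^{1/p}|E^{1/p}$ is known to be separable --- and that is Theorem~\ref{sepextns}(4), applicable since $M$ and $E^{1/p}$ are linearly disjoint over $E$ by the $n=1$ case. Specializing $E=K$, $M=L$ and assembling the levels yields (3), closing the cycle.

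Thus the only genuinely non-formal ingredient is the base-field-changing induction in $(1)\Rightarrow(3)$, whose engine is that separability of $M|E$ is inherited by $ME^{1/p}|E^{1/p}$. (Alternatively one could prove $(1)\Rightarrow(3)$ by reducing to finitely generated subextensions and invoking Theorem~\ref{sepextns}(5) together with the elementary linear disjointness of purely transcendental extensions from algebraic ones and of separable-algebraic extensions from purely inseparable ones.)
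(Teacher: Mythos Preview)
Your argument is correct. Note, however, that the paper does not actually prove this theorem: it is stated without proof in the algebraic preliminaries, with the surrounding text directing the reader to standard references (Bourbaki-style treatments in \cite{MFMJ}, \cite{NJ}, \cite{SL_AG}, \cite{SL}) for details. So there is no ``paper's own proof'' to compare against.

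On the mathematics itself: your organization is clean and the steps are sound. The Frobenius transport for $(1)\Leftrightarrow(2)$, the basis criterion for linear disjointness in $(2)\Leftrightarrow(4)\Leftrightarrow(4')$, the extension/restriction of $p$-independent sets for $(4)\Leftrightarrow(5)$ and $(4')\Leftrightarrow(5')$, and the inductive tower argument for $(1)\Rightarrow(3)$ using Theorem~\ref{sepextns}(4) are all standard and correctly executed. The reduction to finite levels $K^{1/p^n}$ in $(1)\Rightarrow(3)$ is the right move, and your uniform-in-the-base-field induction is exactly how the textbooks handle it. Your alternative sketch via separating transcendence bases would also work but is heavier than necessary.
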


We will also need the following two results about separable extensions.

\begin{lem}		\label{relsep}
If $L|K$ is a separable field extension and $K_1$ is an algebraic extension of $K$ in $L$, then $L|K_1$ is also separable. In particular, $L|K^{\rac}$ is separable, where $K^{\rac}$ is the relative algebraic closure of $K$ in $L$.
\end{lem}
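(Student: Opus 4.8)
The plan is to reduce everything to Theorem~\ref{pindep} via the linear-disjointness characterizations of separability, using the transitivity of linear disjointness. First I would recall the statement to prove: $L|K$ separable and $K\subseteq K_1\subseteq L$ with $K_1|K$ algebraic implies $L|K_1$ separable. By Theorem~\ref{pindep}(3), $L|K$ separable means $L$ and $K^{1/p^\infty}$ are linearly disjoint over $K$. I want to conclude, again by Theorem~\ref{pindep}(3) applied with base field $K_1$, that $L$ and $K_1^{1/p^\infty}$ are linearly disjoint over $K_1$.

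The key observation is that $K_1^{1/p^\infty}$ is an algebraic extension of $K$ contained in $K^{1/p^\infty}$ (since $K_1\subseteq L$ and $K_1|K$ is algebraic, every element of $K_1^{1/p^\infty}$ is algebraic over $K$, and being a $p$-power root of an element of $K_1\subseteq K^{\alg}$ it lies in $K^{1/p^\infty}$ once we fix compatible algebraic closures). So I have the tower $K\subseteq K_1\subseteq K_1^{1/p^\infty}\subseteq K^{1/p^\infty}$. Linear disjointness of $L$ and $K^{1/p^\infty}$ over $K$ gives, by the standard ``intermediate field'' property of linear disjointness, that $L$ and $K_1^{1/p^\infty}$ are linearly disjoint over $K$; and then by the tower property (linear disjointness of $L$ and $K_1^{1/p^\infty}$ over $K$, together with $K_1$ lying between), one gets that $LK_1$ and $K_1^{1/p^\infty}$ are linearly disjoint over $K_1$. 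Since $K_1\subseteq L$, we have $LK_1 = L$, so $L$ and $K_1^{1/p^\infty}$ are linearly disjoint over $K_1$, which is exactly condition (3) of Theorem~\ref{pindep} for $L|K_1$; hence $L|K_1$ is separable. The ``in particular'' statement for $K^{\rac}$ is the special case $K_1 = K^{\rac}$.

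The main obstacle — really the only point requiring care rather than bookkeeping — is justifying the compatibility of perfect hulls inside a common algebraic closure, i.e. that $K_1^{1/p^\infty}\subseteq K^{1/p^\infty}$ rather than merely being abstractly isomorphic to a subfield; this is where one invokes the convention (fixed in the excerpt's discussion of $K^{\alg}$) that all algebraic extensions are taken inside one fixed algebraic closure of $L$. An alternative route, avoiding perfect hulls, uses Theorem~\ref{pindep}(2): $L^p$ and $K$ linearly disjoint over $K^p$; since $K_1|K$ is algebraic one checks $K_1^p K \subseteq K_1$ behaves well and deduces $L^p$ and $K_1$ linearly disjoint over $K_1^p$. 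I would present the perfect-hull version as it is shortest, but either works, and the routine transitivity-of-linear-disjointness lemmas are standard (see \cite{SL}) and need not be reproved here.
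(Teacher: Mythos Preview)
Your argument contains a genuine error at the ``key observation'': the inclusion $K_1^{1/p^\infty}\subseteq K^{1/p^\infty}$ is false in general. The perfect hull $K^{1/p^\infty}$ consists precisely of those $x$ with $x^{p^n}\in K$ for some $n$; being a $p$-power root of an element of $K^{\alg}$ is not enough. Concretely, take $K=\bF_p(t)$ with $p\ne 2$, let $L=K^{\sep}$, and let $K_1=K(\alpha)$ with $\alpha^2=t$. Then $\alpha\in K_1\subseteq K_1^{1/p^\infty}$, but $\alpha\notin K^{1/p^\infty}$ since $\alpha$ is separable (not purely inseparable) over $K$. So already $K_1\not\subseteq K^{1/p^\infty}$, and the chain $K\subseteq K_1\subseteq K_1^{1/p^\infty}\subseteq K^{1/p^\infty}$ you rely on does not exist.

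The paper's proof uses the correct relationship between the two perfect hulls: one has $K_1^{1/p^\infty}=K_1\,K^{1/p^\infty}$. Indeed, $K_1K^{1/p^\infty}$ is algebraic over the perfect field $K^{1/p^\infty}$, hence perfect, hence contains $K_1^{1/p^\infty}$; the reverse inclusion is clear. With this in hand, linear disjointness of $L$ and $K^{1/p^\infty}$ over $K$ yields (by the tower property you quote, applied with the intermediate field $K_1\subseteq L$) that $L=LK_1$ and $K_1K^{1/p^\infty}=K_1^{1/p^\infty}$ are linearly disjoint over $K_1$, which is exactly what is needed. So your overall strategy via Theorem~\ref{pindep}(3) and transitivity of linear disjointness is the same as the paper's; only the identification of $K_1^{1/p^\infty}$ inside the picture needs to be corrected from an inclusion into $K^{1/p^\infty}$ to the compositum equality $K_1^{1/p^\infty}=K_1K^{1/p^\infty}$.
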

\begin{proof}
Since any algebraic extension of a perfect field is perfect and $K_1K^{1/p^\infty} | K^{1/p^\infty}$ is algebraic, it follows that $K_1K^{1/p^\infty}$ is a perfect field containing $K_1$. In particular, $K_1^{1/p^\infty} \subseteq K_1K^{1/p^\infty}$. The other inclusion is trivial. Hence, $K_1^{1/p^\infty} = K_1K^{1/p^\infty}$.
\begin{displaymath}
	\xymatrix {
		L & & & \\
		& & & K_1K^{1/p^\infty} & = & K_1^{1/p^\infty} \\
		K_1 \ar@{-}[uu] \ar@{-}[urrr]^{\txt{lin. disj.\;\;\;\;\;\;\;\;\;}}  & & & K^{1/p^\infty} \ar@{-}[u] \\
		K \ar@{-}[u] \ar@{-}[urrr]^{\txt{lin. disj.\;\;\;\;\;\;\;\;\;}} & & &
	}
\end{displaymath}
Since $L|K$ is separable, we know from Theorem~\ref{pindep} that $L|K$ is linearly disjoint from $K^{1/p^\infty}|K$. It follows that $L|K_1$ is linearly disjoint from $K_1K^{1/p^\infty}|K_1$ \cite[Proposition 3.1]{SL}. Combining this with the fact that  $K_1^{1/p^\infty} = K_1K^{1/p^\infty}$, we get that $L|K_1$ is separable.
\end{proof}

\begin{lem}		\label{perfhul}
If $L|K$ is a field extension such that $L^pK = L$, then $LK^{1/p^n} = L^{1/p^n}$ for all $n\in\bN$. In particular, $LK^{1/p^\infty} = L^{1/p^\infty}$.
\end{lem}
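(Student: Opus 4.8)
The plan is to split the claimed equality into its two inclusions and to observe that one direction is free while the other reduces to a single application of the Frobenius homomorphism. The inclusion $LK^{1/p^n}\subseteq L^{1/p^n}$ is immediate: $L\subseteq L^{1/p^n}$ and, since $K\subseteq L$, also $K^{1/p^n}\subseteq L^{1/p^n}$, so the compositum $LK^{1/p^n}$ is contained in $L^{1/p^n}$. Thus the entire content of the lemma is the reverse inclusion $L^{1/p^n}\subseteq LK^{1/p^n}$.

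The first step is to bootstrap the hypothesis $L^pK=L$ to $L^{p^n}K=L$ for every $n\geq 1$, by induction on $n$: raising the relation $L=L^{p^n}K$ to the $p$-th power gives $L^p=L^{p^{n+1}}K^p$ (the $p$-th power of a compositum of fields being the compositum of the $p$-th powers, since Frobenius is additive), and multiplying by $K$ then yields $L=L^pK=L^{p^{n+1}}K^pK=L^{p^{n+1}}K$. The second step records how elements of $L^{p^n}K$ look: the compositum $L^{p^n}K$ formed inside $L$ coincides with the ring $K[L^{p^n}]$, which is a domain integral over the field $L^{p^n}$ (every element of $L$ is purely inseparable over $L^{p^n}$) and hence is itself a field; as a $K$-module it is spanned by $L^{p^n}$. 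Consequently every element of $L=L^{p^n}K$ can be written as a finite sum $\sum_i k_i\,l_i^{p^n}$ with $k_i\in K$ and $l_i\in L$.

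Now, given $a\in L^{1/p^n}$ we have $a^{p^n}\in L$, so we may write $a^{p^n}=\sum_i k_i\,l_i^{p^n}$ as above. Applying the iterated inverse Frobenius $x\mapsto x^{1/p^n}$, which is a field homomorphism on $L^{1/p^\infty}$ and in particular additive and multiplicative, we obtain $a=\sum_i k_i^{1/p^n}\,l_i$; here $k_i^{1/p^n}\in K^{1/p^n}$ and $l_i\in L$, so $a$ lies in the compositum $LK^{1/p^n}$. This gives $L^{1/p^n}\subseteq LK^{1/p^n}$, hence the asserted equality for each $n$. Finally, taking the union over $n$ and using $K^{1/p^\infty}=\bigcup_n K^{1/p^n}$ yields $LK^{1/p^\infty}=\bigcup_n LK^{1/p^n}=\bigcup_n L^{1/p^n}=L^{1/p^\infty}$. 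I do not anticipate any serious obstacle here; the only points that deserve a line of justification are that $K[L^{p^n}]$ is already a field (integrality over a subfield) and that $x\mapsto x^{1/p^n}$ may legitimately be applied termwise, both of which are routine.
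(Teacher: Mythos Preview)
Your proof is correct and follows essentially the same route as the paper's: both bootstrap $L^pK=L$ to $L^{p^n}K=L$ by iteration, both express an arbitrary element of $L$ as a finite sum $\sum_i k_i\,l_i^{p^n}$ with $k_i\in K$ and $l_i\in L$, and both then apply the $p^n$-th root map termwise to conclude. The only cosmetic difference is viewpoint: the paper phrases the middle step as ``there is a basis of the $L^{p^n}$-vector space $L$ lying in $K$'', whereas you phrase it as ``the $K$-module $K[L^{p^n}]$ is all of $L$''; these are the same fact read from the two sides of the compositum.
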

\begin{proof}
The condition $L = L^pK$ says that there is a basis $\cM_1$ of the extension $L|L^p$ in $K$. Moreover, this condition also gives $L^p = L^{p^2}K^p$, and thus, $L = L^{p^2}K^pK = L^{p^2}K$. Consequently, there is a basis $\cM_2$ of the extension $L|L^{p^2}$ in $K$. By a simple induction, there exists a basis $\cM_n$ of the extension $L|L^{p^n}$ in $K$ for every integer $n\ge 1$.

Let $x\in L^{1/p^n}$ be arbitrary. Then $x^{p^n}\in L$ and hence,
$$x^{p^n} = \sum_i y_i^{p^n} m_{i, n},$$
where $m_{i, n}\in\cM_n\subseteq K$ and $y_i\in L$. Consequently,
$$x = \sum_i y_i\, m_{i, n}^{1/p^n}.$$
In other words, $x\in LK^{1/p^n}$. Thus, $L^{1/p^n}\subseteq LK^{1/p^n}$. Since the other inclusion is obvious, we get that
$$LK^{1/p^n} = L^{1/p^n}.$$
Since $L^{1/p^\infty}$ and $K^{1/p^\infty}$ are direct limits of $L^{1/p^n}$ and $K^{1/p^n}$ respectively, for $n\ge 1$, we also obtain $LK^{1/p^\infty} = L^{1/p^\infty}.$
\end{proof}

For the proof of Theorem~\ref{sepdefextn}, we will need the following lemma.

\begin{lem}		\label{sepextnclass}
Let $K$ be a field of characteristic $p > 0$ and let $\cB$ be a $p$-basis of $K$. Let $L|K$ be a separable field extension. Then for any element $t\in L$ transcendental over $K$,
$$L|K(t) \mbox{ is separable if and only if } t\not\in L^p(\cB).$$
\end{lem}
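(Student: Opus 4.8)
The statement is an equivalence, so I will prove both directions, and in each direction the key tool is Theorem~\ref{pindep}: an extension $M|N$ of characteristic-$p$ fields is separable iff any (equivalently, some) $p$-basis of $N$ remains $p$-independent in $M$. I would first record the auxiliary fact that since $\cB$ is a $p$-basis of $K$ and $L|K$ is separable, $\cB$ is $p$-independent in $L$ (Theorem~\ref{pindep}, (1)$\Rightarrow$(4)), and moreover $\cB$ can be extended to a $p$-basis of $L$. The game is then to understand when $\cB \cup \{t\}$ (or a suitable variant) is a $p$-basis, respectively $p$-independent set, over the right subfield, in order to detect separability of $L|K(t)$.

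\emph{Direction ($\Leftarrow$):} Assume $t \notin L^p(\cB)$. I want to show $L|K(t)$ is separable. I would produce a $p$-basis of $K(t)$ that remains $p$-independent in $L$. The natural candidate is $\cB \cup \{t\}$ (note $t$ is transcendental over $K$, so $t \notin K$, and since $\cB$ is a $p$-basis of $K$, $K^p[\cB] = K$; one checks $K(t)^p[\cB \cup \{t\}] = K(t)$ using $K(t)^p = K^p(t^p)$, so $\cB \cup \{t\}$ is at least a generating $p$-set for $K(t)$). That $\cB \cup \{t\}$ is actually $p$-\emph{independent} in $K(t)$, hence a genuine $p$-basis of $K(t)$, follows because $t$ is transcendental over $K$ and $\cB$ is a $p$-basis of $K$: a nontrivial $K(t)^p$-linear relation among the monomials in $\cB \cup \{t\}$ would, after clearing denominators and using transcendence of $t$, force a nontrivial $K^p$-relation among the monomials in $\cB$. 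Now I must show $\cB \cup \{t\}$ stays $p$-independent in $L$. We already know $\cB$ is $p$-independent in $L$; by the monomial criterion it suffices to show $t \notin L^p[\cB \setminus \{b\}\text{-combinations}]$, i.e. that $t \notin L^p(\cB)$ — which is exactly the hypothesis. (More carefully: $p$-independence of $\cB \cup \{t\}$ in $L$ is equivalent, by criterion (1) in the discussion of $p$-bases, to each element of the set not lying in the $L^p$-algebra generated by the others; for $t$ this says $t \notin L^p[\cB]= L^p(\cB)$, and for $b \in \cB$ this is the already-known $p$-independence of $\cB$ in $L$ together with transcendence of $t$.) Hence $\cB\cup\{t\}$, a $p$-basis of $K(t)$, is $p$-independent in $L$, so by Theorem~\ref{pindep} the extension $L|K(t)$ is separable.

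\emph{Direction ($\Rightarrow$):} Assume $L|K(t)$ is separable, and suppose for contradiction that $t \in L^p(\cB)$. Since $L|K(t)$ is separable and $K(t)|K$ is separable (it is even separably generated, $t$ transcendental), $L|K$ is separable by transitivity (Theorem~\ref{sepextns}(2)) — that part is automatic from the hypotheses, but the point is the contradiction. From $t \in L^p(\cB) = L^p[\cB]$ I would argue that $\cB$ already $p$-generates $K(t)$ over... more precisely, $\cB \cup \{t\}$ fails to be $p$-independent in $L$: the relation expressing $t$ as an $L^p$-combination of $\cB$-monomials is a nontrivial dependence. But if $L|K(t)$ is separable, then a $p$-basis of $K(t)$ must remain $p$-independent in $L$; and one shows $\cB\cup\{t\}$ is forced to be (contained in) a $p$-basis of $K(t)$ — indeed $\cB$ is $p$-independent in $K(t)$ and $t$ is not in $K(t)^p[\cB] = K^p(t^p)[\cB] = K(t^p)$ since $t\notin K(t^p)$ (as $[K(t):K(t^p)]=p$ and $t\notin K(t^p)$). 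So $\cB\cup\{t\}$ is $p$-independent in $K(t)$, extends to a $p$-basis of $K(t)$, and separability of $L|K(t)$ forces this $p$-basis — in particular $\cB\cup\{t\}$ — to stay $p$-independent in $L$, contradicting $t\in L^p(\cB)$. Hence $t\notin L^p(\cB)$.

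\emph{Main obstacle.} The delicate point in both directions is verifying the $p$-independence statements over $K(t)$ — i.e. that adjoining the transcendental $t$ to the $p$-basis $\cB$ of $K$ yields a $p$-basis of $K(t)$, and the corresponding statement relating $L^p(\cB)$ to $L^p[\cB]$ and to the monomial criterion. These require careful bookkeeping with monomials in $\cB\cup\{t\}$ and repeated use of transcendence of $t$ over $K$ (so that $K(t)^p = K^p(t^p)$ and linear relations over $K(t)^p$ can be cleared of the variable $t$). Once those $p$-basis computations are in hand, the equivalence drops out of the criterion (1)$\Leftrightarrow$(4)/(4$'$) in Theorem~\ref{pindep}.
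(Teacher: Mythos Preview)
Your approach is essentially the same as the paper's: both arguments hinge on (i) the observation that $\cB\cup\{t\}$ is a $p$-basis of $K(t)$ and (ii) the criterion (1)$\Leftrightarrow$(4)/(4$'$) of Theorem~\ref{pindep}. The paper runs both directions by contrapositive, which makes the logic slightly cleaner.

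There is one soft spot worth flagging. In your $(\Leftarrow)$ direction you want $\cB\cup\{t\}$ to be $p$-independent in $L$, and for the case of an element $b\in\cB$ you need $b\notin L^p[(\cB\setminus\{b\})\cup\{t\}]$. You justify this by ``the already-known $p$-independence of $\cB$ in $L$ together with transcendence of $t$''. Transcendence of $t$ over $K$ is not the operative fact here; it says nothing about $p$-dependence inside $L$. What you actually need is the Exchange Principle for $p$-dependence (this is exactly what the paper invokes, citing \cite[Lemma~2.7.1]{MFMJ}): if $b\in L^p[(\cB\setminus\{b\})\cup\{t\}]$ but $b\notin L^p[\cB\setminus\{b\}]$, then $t\in L^p[(\cB\setminus\{b\})\cup\{b\}]=L^p[\cB]=L^p(\cB)$, contradicting the hypothesis. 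Equivalently, once you know $\cB$ is $p$-independent in $L$ and $\cB\cup\{t\}$ is not, the matroid property of $p$-independence forces $t\in L^p(\cB)$. With that correction your proof is complete and matches the paper's.
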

\begin{proof}
First note that since $\cB$ is a $p$-basis of $K$ and $t$ is algebraically independent over $K$, it follows that $\cB\cup\{t\}$ is a $p$-basis of $K(t)$. We will now prove the contrapositive of the above statement.

If $t\in L^p(\cB)$, then $\cB\cup\{t\}$ is not $p$-independent in $L$. Thus, by Theorem~\ref{pindep}, the extension $L|K(t)$ is not separable.

Conversely, assume $L|K(t)$ is inseparable. Then by the equivalence of $(1)$ and $(4')$ in Theorem~\ref{pindep}, $\cB\cup\{t\}$ is not $p$-independent in $L$. However, since $L|K$ is separable, $\cB$ remains $p$-independent in $L$. By the Exchange Principle \cite[Lemma 2.7.1]{MFMJ}, it then follows that $t\in L^p(\cB)$.
\end{proof}

\subsection{Results on the structure of valued function fields}		\label{strvff}

We will say that a valued function field $(F|K,v)$ is \bfind{strongly inertially generated}, if there is a transcendence basis
\[
\cT = \{x_1, \ldots,x_r,y_1,\ldots, y_s\}
\]
of $(F|K,v)$ such that
\par\smallskip
a) \ $vF = vK(\cT) = vK\oplus\bZ vx_1\oplus \ldots\oplus \bZ vx_r$,\par
b) \ $y_1v,\ldots,y_sv$ form a separating transcendence basis of $Fv|Kv$,
\par\smallskip\noindent
and there is an element $a$ in some henselization $F^h$ of $(F,v)$ such that $F^h = K(\cT)^h(a)$, $va = 0$ and $K(\cT)v(av)|K(\cT)v$ is separable of degree equal to $[K(\cT)^h(a) : K(\cT)^h]$. The latter means that $F$ lies in the ``absolute inertia field'' of $(K(\cT), v)$.
As henselizations are separable algebraic extensions, this also means that the field extension $F|K$ is separably generated and hence separable.

\par\smallskip
For the purpose of the present paper, the importance of strongly inertially generated valued function fields lies in the the following embedding lemma \cite[Lemma~5.6]{FVK_TF}.

\begin{lem}                        \label{ael}
%{\bf (Embedding Lemma I)}\n
Let $(F|K, v)$ be a strongly inertially generated function field and $(K^*, v^*)$ be a henselian extension of $(K, v)$. Assume that $vF/vK$ is torsion free and that $Fv|Kv$ is separable. If $\rho:\>vF \longrightarrow v^*K^*$ is an embedding over $vK$ and $\sigma:\>Fv
\longrightarrow K^*v^*$ is an embedding over $Kv$, then there exists an embedding $\iota:\>(F, v) \longrightarrow (K^*, v^*)$ over $(K, v)$ that respects $\rho$ and $\sigma$, i.e., $v^*(\iota a) = \rho (va)$ and $(\iota a)v^*= \sigma(av)$ for all $a \in F$.
\end{lem}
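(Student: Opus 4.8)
The plan is to build the embedding $\iota$ in stages, matching the structure of a strongly inertially generated function field: first embed the ``value-transcendental'' part $K(x_1,\dots,x_r)$, then the ``residue-transcendental'' part, then the final henselian-algebraic step using $a$. Throughout, the key technical tool is that $(K^*,v^*)$ is henselian, so it contains a henselization of every subfield we build, and that rational function field extensions with prescribed value-group or residue-field behavior are rigid enough to admit canonical embeddings once we name the images of the generators.

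\emph{Step 1: the value transcendence basis.}
By hypothesis (a), $vK(x_1,\dots,x_r) = vK \oplus \bigoplus_i \bZ vx_i$, so the values $vx_1,\dots,vx_r$ are rationally independent over $vK$. The given embedding $\rho\colon vF \hookrightarrow v^*K^*$ sends these to elements $\gamma_i := \rho(vx_i)$ of $v^*K^*$, still rationally independent over $vK$ (identifying $vK$ with its image). Since $K^*$ is nontrivially valued in the relevant directions, pick any $c_i\in K^*$ with $v^*c_i=\gamma_i$; mapping $x_i\mapsto c_i$ extends to a $K$-embedding $K(x_1,\dots,x_r)\hookrightarrow K^*$. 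One checks, using that a valuation on a rational function field is determined by its values on a rationally independent family of generators together with the valuation on the base and the fact that no residue-algebraic relations are introduced, that this embedding is valuation-preserving and that it respects $\rho$ on $vK(x_1,\dots,x_r)$. (This is the standard ``no residue defect, no value defect'' computation for generators whose values are $\bQ$-linearly independent; I will cite it rather than redo it.)

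\emph{Step 2: the residue transcendence basis.}
Now work over $K_1 := K(x_1,\dots,x_r)$, already embedded. By (b), $y_1v,\dots,y_sv$ form a separating transcendence basis of $Fv|Kv = K_1 v$. Using $\sigma\colon Fv\hookrightarrow K^*v^*$, the elements $\sigma(y_jv)$ are algebraically independent over $Kv$ (again identifying); since $Fv|Kv$ is separable and the $y_jv$ are a separating transcendence basis, the residue extension $K_1(y_1,\dots,y_s)v = K_1v(y_1v,\dots,y_sv)$ over $K_1v$ is separable. Choose $d_j\in K^*$ with $v^*d_j=0$ and $d_jv^*=\sigma(y_jv)$; mapping $y_j\mapsto d_j$ extends the embedding to $K_2 := K_1(y_1,\dots,y_s)\hookrightarrow K^*$. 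Because the $y_j$ have value $0$ and their residues are algebraically independent, this extension adds no new values and has residue field $K_1v(d_jv^*)$, so it is immediate on the value group side and the residue map is exactly $\sigma$ on $K_2 v$. Thus $\iota$ on $K_2$ respects $\rho$ and $\sigma$.

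\emph{Step 3: the final henselian step, and the main obstacle.}
We have now embedded $K(\cT)=K_2$ and, since $K^*$ is henselian, this extends canonically to $K(\cT)^h\hookrightarrow K^*$ (the henselization is the minimal henselian extension, unique up to valued isomorphism over $K(\cT)$). It remains to embed $a$, where $F^h = K(\cT)^h(a)$, $va=0$, and $K(\cT)v(av)|K(\cT)v$ is separable of degree $[K(\cT)^h(a):K(\cT)^h]$. The residue $av$ has minimal polynomial $\bar g$ over $K(\cT)v$, which is separable; its image $\sigma(\bar g)$ under the embedded residue field has a simple root in $K^*v^*$, namely we must arrange the image of $av$ to be a specified element of $K^*v^*$ compatible with $\sigma$. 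Here is where the structure ``$F$ lies in the absolute inertia field of $K(\cT)$'' does its work: lift $\bar g$ to a monic $g\in K(\cT)^h[X]$ whose reduction is $\bar g$; its image $g^\iota\in K^*[X]$ has reduction $\sigma(\bar g)$, which by separability and Hensel's Lemma in $(K^*,v^*)$ has a root $\alpha\in K^*$ with $v^*\alpha=0$ and $\alpha v^* = $ (the chosen root of $\sigma(\bar g)$, i.e.\ $\sigma(av)$). Sending $a\mapsto\alpha$ gives the $K(\cT)^h$-embedding of $K(\cT)^h(a)=F^h$ into $K^*$, and by construction $v^*(\iota a)=0=\rho(va)$ and $(\iota a)v^* = \sigma(av)$. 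Restricting to $F$ yields the desired $\iota$. \textbf{The part that needs care} is checking that $\iota$ respects $\rho$ and $\sigma$ on \emph{all} of $F$, not just on the named generators: one must verify that $vF = vK(\cT)$ (given by (a)) and that the residue map on $F$ is pinned down by its values on $K(\cT)v$ together with $av$ — i.e.\ that $Fv$ is generated over $K(\cT)v$ by $av$, which follows because $F\subseteq F^h=K(\cT)^h(a)$ is an immediate extension of $K(\cT)(a)$ after henselization and $[K(\cT)^h(a):K(\cT)^h]=[K(\cT)v(av):K(\cT)v]$ forces the residue and value data to match exactly. I expect this bookkeeping, rather than any single hard lemma, to be the real content of the argument.
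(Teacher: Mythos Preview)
The paper does not give its own proof of this lemma: it is quoted verbatim as \cite[Lemma~5.6]{FVK_TF} and used as a black box. So there is no in-paper argument to compare against; your sketch is in fact a reconstruction of what the cited proof must look like, and the three-stage strategy (value-transcendental generators, residue-transcendental generators, then the inertial primitive element via Hensel) is exactly the standard route.

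One genuine imprecision in Step~3 is worth flagging. You write ``lift $\bar g$ to a monic $g\in K(\cT)^h[X]$ whose reduction is $\bar g$'' and then declare that $a\mapsto\alpha$ extends to a field embedding of $K(\cT)^h(a)$. For that last step you need $g(a)=0$, so $g$ cannot be an \emph{arbitrary} lift of $\bar g$: you must take $g$ to be the minimal polynomial of $a$ over $K(\cT)^h$ and then argue that its reduction is $\bar g$. That argument is short --- since $[K(\cT)^h(a):K(\cT)^h]=[K(\cT)v(av):K(\cT)v]$ and $va=0$, the coefficients of $g$ lie in the valuation ring, the reduction $gv$ has degree~$n$ and has $av$ as a root, hence $gv=\bar g$ --- but it is not the same as ``pick any lift.'' With that fix, applying Hensel's Lemma to $\iota(g)$ over $(K^*,v^*)$ to produce $\alpha$ with $\alpha v^*=\sigma(av)$ is correct and does give the required $K(\cT)^h$-embedding of $F^h$.

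Your final bookkeeping is also right and is exactly the point: $vF=vK(\cT)$ by hypothesis~(a), henselization is immediate so $Fv=F^hv=K(\cT)v(av)$, and therefore specifying $\iota$ on $\cT$ and on $a$ pins down the induced maps on $vF$ and $Fv$ completely, forcing them to agree with $\rho$ and $\sigma$.
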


\par\smallskip
We need a criterion for a valued function field to be strongly inertially generated. The following is Theorem~3.4 of \cite{HKFVK_AP}:

\begin{thm}                             \label{hrwtd}
Take a defectless field $(K, v)$ and a valued function field $(F|K, v)$ without transcendence defect. Assume that $Fv|Kv$ is a separable
extension and $vF/vK$ is torsion free. Then $(F|K, v)$ is strongly inertially generated. In fact, for each transcendence basis $\cT$
that satisfies conditions a) and b) there is an element $a$ with the required properties in every henselization of $F$.
\end{thm}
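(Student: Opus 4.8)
The plan is to prove the two assertions of the theorem in turn: first exhibit \emph{one} transcendence basis $\cT$ satisfying conditions a) and b), and then, for an \emph{arbitrary} such $\cT$, locate the element $a$ inside a henselization $F^h$ of $F$. Together these give that $(F|K,v)$ is strongly inertially generated, together with the ``in fact'' clause.

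\emph{Step 1: producing $\cT$.} Since $(F|K,v)$ is a function field without transcendence defect, the inequality (\ref{wtdgeq}) is an equality, so $\trdeg F|K=r+s$; moreover, for valued function fields without transcendence defect the group $vF/vK$ is finitely generated and the residue extension $Fv|Kv$ is finitely generated (these structure facts are in \cite{FVK_TF}). Being finitely generated and torsion free, $vF/vK$ is free of rank $r$, so I would choose $x_1,\dots,x_r\in F$ whose values form a $\bZ$-basis of $vF/vK$. As $Fv|Kv$ is finitely generated and separable, Theorem~\ref{sepextns}(5) provides a separating transcendence basis $\bar y_1,\dots,\bar y_s$ of $Fv|Kv$, which I would lift to $y_1,\dots,y_s\in\cO_F$ with $vy_j=0$. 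A routine induction on the number of generators --- adjoining the value-transcendental $x_i$ first, then the residue-transcendental $y_j$ --- then shows that $\cT:=\{x_1,\dots,x_r,y_1,\dots,y_s\}$ is a transcendence basis of $F|K$ with $vK(\cT)=vK\oplus\bigoplus_{1\le i\le r}\bZ vx_i=vF$ and $K(\cT)v=Kv(\bar y_1,\dots,\bar y_s)$, so a) and b) hold and $\cT$ is a standard valuation transcendence basis of $(F|K,v)$.

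\emph{Step 2: producing $a$.} Fix any $\cT=\{x_1,\dots,x_r,y_1,\dots,y_s\}$ satisfying a) and b) and put $L:=K(\cT)$; from a) and b) one checks $vF=vL=vK\oplus\bigoplus_{1\le i\le r}\bZ vx_i$, $Lv=Kv(y_1v,\dots,y_sv)$, and $Fv|Lv$ separable algebraic (and finite, since $Fv|Kv$ is finitely generated). The extension $L|K$ is a standard valuation transcendence basis extension, so defectlessness passes from $(K,v)$ to $(L,v)$ --- this is the crucial input, discussed below --- and hence $L^h=K(\cT)^h$ is henselian and defectless. Since $\trdeg F|K=r+s=\trdeg L|K$, the extension $F|L$ is algebraic, and being finitely generated it is finite; moreover $F^h=F\cdot L^h$ (this compositum is finite over the henselian field $L^h$, hence henselian), so $[F^h:L^h]\le[F:L]<\infty$. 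By a) and the immediacy of henselizations, $vF^h=vF=vL=vL^h$ while $F^hv=Fv$ and $L^hv=Lv$, so the fundamental equality for the henselian defectless field $L^h$ gives
\[
[F^h:L^h]\;=\;(vF^h:vL^h)\,(F^hv:L^hv)\;=\;[Fv:Lv]\;=:\;f.
\]
I would then write $Fv=Lv(\theta)$ with $\theta\ne0$ (primitive element theorem) and pick $a\in\cO_{F^h}$ with $av=\theta$, so $va=0$. From $Lv(\theta)=L^hv(av)\subseteq L^h(a)v\subseteq F^hv$ one reads off $f=[Lv(\theta):Lv]\le(L^h(a)v:L^hv)\le[L^h(a):L^h]\le[F^h:L^h]=f$, forcing equality throughout; in particular $L^h(a)=F^h$, the value group of $L^h(a)$ equals that of $L^h$, and $L^h(a)v=L^hv(av)=Lv(av)$ is separable over $Lv=K(\cT)v$ of degree $f=[K(\cT)^h(a):K(\cT)^h]$. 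Thus $a$ has exactly the properties demanded in the definition of strongly inertially generated, which completes the argument.

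\emph{Main obstacle.} Apart from one point, all of the above is bookkeeping with the fundamental (in)equality, with henselizations (which are immediate and separable-algebraic), and with the primitive element theorem. The genuinely valuation-theoretic ingredient --- the step I expect to be the hardest --- is the transfer of defectlessness: that $(K,v)$ defectless implies $(K(\cT),v)$ defectless. By an induction removing one element of $\cT$ at a time, this reduces to the two one-variable statements that adjoining a single value-transcendental, respectively residue-transcendental, element to a defectless field again yields a defectless field; these are proved by a Weierstrass-division / Lemma-of-Ostrowski type analysis (cf.\ \cite{FVK_ER}). I would either invoke them from the literature or isolate them as preliminary lemmas, and then assemble the remainder exactly as above.
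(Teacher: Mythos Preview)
The paper does not prove this theorem at all: it is quoted verbatim as Theorem~3.4 of \cite{HKFVK_AP} and used as a black box. So there is no ``paper's own proof'' to compare your proposal against.

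That said, your outline is essentially the standard argument and is correct. A couple of minor remarks. In Step~2 you do not actually need the separate fact that $Fv|Kv$ is finitely generated to conclude $[Fv:Lv]<\infty$: since $\cT$ is a transcendence basis and $F|K$ is finitely generated, $F|L$ is finite, and the fundamental inequality gives $[Fv:Lv]\le[F:L]<\infty$ directly. More importantly, the step you flag as the obstacle --- that $(K(\cT),v)$ is again defectless --- is exactly the Generalized Stability Theorem of \cite{FVK_ER} (indeed, it is the case of a purely transcendental extension generated by a standard valuation transcendence basis). Your inductive reduction to the one-variable value-transcendental and residue-transcendental cases is fine as an organizational device, but each of those one-variable cases is already the nontrivial content of that theorem; it is not a Weierstrass-preparation triviality, and you should simply invoke \cite{FVK_ER} rather than attempt to reprove it. With that citation in place, your Step~2 computation with the fundamental equality over the henselian defectless field $K(\cT)^h$ and the primitive element for $Fv|K(\cT)v$ goes through exactly as you wrote it.
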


This theorem does in general not hold without the assumption that $(K,v)$ is a defectless field. However, if we replace this condition 
by the assumption that $F|K$ is separable, then we can still find a separating transcendence basis $\cT$ that satisfies conditions a) and b). This is the main assertion of Theorem~\ref{sepdefextn}. For its proof, we will need another lemma.

\begin{lem}		\label{sepdefextnlem}
Let $(F|K, v)$ be a separable valued function field with $v$ nontrivial on $F$. Then for any set of rationally independent values $\{\gamma_1, \ldots, \gamma_r\}$ in $vF$ over $vK$ and any set of algebraically independent residues $\{\bar{y}_1, \ldots, \bar{y}_s\}$ in $Fv$ over $Kv$, there exists a subset $\cT = \{x_1, \ldots, x_r, y_1, \ldots, y_s\}$ of $F$ with $vx_i = \gamma_i$, for $i = 1, \ldots, r$, and $y_jv = \bar{y}_j$, for $j = 1, \ldots, s$, such that $F|K(\cT)$ is separable.
%\begin{eqnarray*}
%F|K(\cT) & \mbox{is} & \mbox{separable,} \\
%vK(\cT) & = & vK \oplus \bigoplus_{1\le i \le r} \bZ vx_i, \;\;\;\;\mbox{and}\\
%K(\cT)v & = & Kv(y_1v, \ldots, y_sv).
%\end{eqnarray*}
\end{lem}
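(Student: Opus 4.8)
The plan is to start with an arbitrary set of representatives and then correct it, using Lemma~\ref{sepextnclass} as the engine. First I would pick any $x_i\in F$ with $vx_i=\gamma_i$ and any $y_j\in F$ with $y_jv=\bar y_j$; call this tentative set $\cT_0=\{x_1,\dots,x_r,y_1,\dots,y_s\}$. Since the $\gamma_i$ are rationally independent over $vK$ and the $\bar y_j$ are algebraically independent over $Kv$, the set $\cT_0$ is a standard valuation transcendence basis fragment and in particular algebraically independent over $K$; so $K(\cT_0)|K$ is a rational function field and $F|K(\cT_0)$ is an extension I want to make separable. Fix a $p$-basis $\cB$ of $K$. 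Since $F|K$ is separable, $\cB$ remains $p$-independent in $F$ (Theorem~\ref{pindep}), and I will adjoin the transcendental elements one at a time, at each stage invoking Lemma~\ref{sepextnclass}: having arranged that $K(t_1,\dots,t_{k-1})$ has $\cB\cup\{t_1,\dots,t_{k-1}\}$ as a $p$-basis and that this set is still $p$-independent in $F$, the next element $t_k$ can be kept (and $F|K(t_1,\dots,t_k)$ made separable in the relevant sense) provided $t_k\notin F^p(\cB,t_1,\dots,t_{k-1})$.

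The substitution step is where the valuation-theoretic input enters. If some $x_i$ (respectively $y_j$) lies in the ``forbidden'' set $F^p(\cB,\dots)$, I must replace it by another element with the same value (respectively the same residue) that escapes this set. For the value case: if $x_i\in F^p(\cB,\ldots)=:M$, I want to find $u\in F$ with $vu=\gamma_i$ and $u\notin M$; I would look for $u=x_i(1+c)$ or $u=x_i\cdot d$ with $vc>0$, $vd=0$, chosen so that $u\notin M$ — the point being that $M$ is a proper $F^p$-subspace (it is spanned by monomials in $\cB$ and the previously fixed elements, and since $v$ is nontrivial on $F$ and $\cB\cup\{\dots\}$ is $p$-independent, $M\ne F$), and one has a lot of freedom since the coset of $x_i$ modulo ``value-preserving perturbations'' is large. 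Concretely, if $x_i\in M$ then some element of value $\gamma_i$ is not in $M$: otherwise the whole coset $x_i(1+\fm)$ together with nearby elements would lie in $M$, which one can rule out by a dimension/counting argument against $M$ being proper. For the residue case it is analogous: if $y_j\in M$, replace $y_j$ by $y_j+w$ where $vw>0$ (so the residue is unchanged) and $w$ is chosen so $y_j+w\notin M$; again possible because $M$ is a proper subspace and $\{y_j+w : vw>0\}$ is not contained in any proper $F^p$-subspace — indeed $M\cap(y_j+\fm_F)$ has ``codimension'' at least one inside $y_j+\fm_F$.

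The main obstacle I anticipate is precisely the bookkeeping in this one-at-a-time correction: when I fix $t_k$ I must not disturb the earlier $t_1,\dots,t_{k-1}$, so the forbidden set $M$ grows as I proceed, and I need to be sure that at every stage a value-preserving (or residue-preserving) representative outside $M$ still exists. The clean way to organize this is to prove a small sublemma: if $N$ is a proper $F^p$-subspace of $F$ containing $F^p$ and $\gamma\in vF$ (resp. $\bar y\in Fv$), then there is $u\in F$ with $vu=\gamma$ (resp. $uv=\bar y$) and $u\notin N$. Granting that sublemma, the induction runs smoothly and terminates after $r+s$ steps, yielding the desired $\cT=\{x_1,\dots,x_r,y_1,\dots,y_s\}$ with $\cB\cup\cT$ $p$-independent in $F$, whence $F|K(\cT)$ is separable by Theorem~\ref{pindep} (equivalence of (1) and (4$'$)). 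One should also note that replacing representatives does not affect the rational independence of the values or the algebraic independence of the residues, so $\cT$ is still a transcendence basis of $F|K$; this is immediate since those properties depend only on $vx_i$ and $y_jv$, which are preserved.
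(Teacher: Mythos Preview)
Your overall strategy---adjoin representatives one at a time and use Lemma~\ref{sepextnclass} to control separability via $p$-independence---matches the paper's; the paper organizes the induction by absorbing the already-chosen elements into $K$ (reducing to $r+s=1$), which is equivalent to your bookkeeping with the growing set $M=F^p(\cB,t_1,\dots,t_{k-1})$.

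The genuine gap is the proof of your ``small sublemma'', and your proposed methods for it do not work as stated. Multiplicative perturbations $x_i\cdot d$ with $d\in F^p$ stay inside $M$ whenever $x_i\in M$, and there is no finite ``dimension/counting'' to invoke: $M$ is a proper $F^p$-subspace of possibly infinite codimension, and that alone does not prevent a ball from lying in it without further input. The paper's argument for this step is concrete and is exactly where the nontriviality of $v$ on $F$ enters: pick any $t_1\notin M$ (the paper takes $t_1$ from a separating transcendence basis of $F|K$, so $F|K(t_1)$ is separable and hence $t_1\notin F^p(\cB)$ by Lemma~\ref{sepextnclass}), then choose $c\in F^p$ with $v(ct_1)>\gamma_i$ in the value case, respectively $v(ct_1)>0$ in the residue case---possible because $vF^p$ is cofinal in $vF$. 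Since $M$ is an $F^p$-subspace and $c\in F^{p}\setminus\{0\}$, one has $ct_1\notin M$, hence $z+ct_1\notin M$, while $v(z+ct_1)=vz$ (respectively $(z+ct_1)v=zv$). This is the replacement you are looking for. One minor slip at the end: you assert $\cT$ is a transcendence basis of $F|K$, but Lemma~\ref{sepdefextnlem} does not assume the extension is without transcendence defect, so $\cT$ is only algebraically independent in general; the transcendence-basis conclusion belongs to Theorem~\ref{sepdefextn}.
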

\begin{proof}
%The second and third assertion of the theorem follows from \cite[Lemma 2.2]{FVK_TF}.
If char $K$ = 0, then char $K(\cT)$ = 0 and thus, every field extension of $K(\cT)$ is separable. So, we assume without loss of generality that char $K = p > 0$.

%Since $F|K$ has finite transcendence degree, we have the inequality
%$$\trdeg F|K \ge \mbox{ r.r. } vF/vK +  \trdeg Fv|Kv \ge r + s.$$

We prove the result by induction on $r + s$. If $r + s = 0$, then choosing $\cT = \emptyset$ does the job. If $r'\le r$, $s'\le s$ and $r'+s'+1= r+s$, then by induction hypothesis, there exists a subset $\cT'=\{x_1, \ldots, x_{r'}, y_1, \ldots, y_{s'}\}$ of $F$ with $vx_i = \gamma_i$ for $i = 1, \ldots, r'$ and $y_jv = \bar{y}_j$ for $j = 1, \ldots, s'$, such that $F|K(\cT')$ is separable. Replacing $K$ by $K(\cT')$, we have therefore reduced the problem to the case $r + s = 1$, which further splits into two subcases:

%For induction hypothesis, let us assume that the result holds for all separable valued function fields $F|K'$ with $\trdeg F|K' \le \trdeg F|K$ and all $(r', s')$ such that $r'\le r$, $s'\le s$ and $r' + s' < r + s$.

\par\smallskip
\underline{\bf Subcase I. $r = 0, s = 1$} : There is an element $\bar{y}_1\in Fv$ transcendental over $Kv$. Pick an element $z_1\in F$ such that $z_1v = \bar{y}_1$.
%By \cite[Lemma 2.2]{FVK_TF}, $z_1$ is transcendental over $K$, $K(z_1)v = Kv(\bar{y}_1)$ and $vK(z_1) = vK$.
If $F|K(z_1)$ is separable, we set $y_1 := z_1$, and we are done.

Otherwise, $F|K(z_1)$ is inseparable. By \cite[Lemma 2.2]{FVK_TF}, $z_1$ is transcendental over $K$. Then $z_1\in F^p(\cB)$ by Lemma~\ref{sepextnclass}, where $\cB$ is a $p$-basis of $K$. Now pick an element $t_1\in F$ such that $F|K(t_1)$ is separable. We can do that because $F|K$ is a separable function field, and we can choose $t_1$ from a separating transcendence basis of $F|K$. Since $F|K(t_1)$ is separable, it follows again by Lemma~\ref{sepextnclass} that $t_1\not\in F^p(\cB)$. Pick $c\in F^p$ such that $vct_1 > 0$. This is possible because $vF^p$ is cofinal in $vF$ and $v$ is nontrivial on $F$. Since $c\in F^p$, we get that $ct_1\not\in F^p(\cB)$. Thus, $ct_1 + z_1\not\in F^p(\cB)$ either. By Lemma~\ref{sepextnclass} again, $F|K(ct_1 + z_1)$ is separable. Moreover, $(ct_1 + z_1)v = z_1v = \bar{y}_1$. We set $y_1 := ct_1 + z_1$, and we are done.
%Then $F|K(y_1)$ is separable. Moreover, $y_1v = (ct_1 + w_1)v = w_1v = \bar{w}_1$, and hence, $K(y_1)v = Kv(\bar{w}_1)$. Thus, in both cases, we have successfully reduced the problem of finding an appropriate set $\cT$ for $(F|K(y_1), v)$ which now follows from the induction hypothesis.

\par\smallskip
\underline{\bf Subcase II. $r = 1, s = 0$} :  There is an element $\gamma_1\in vF$ rationally independent over $vK$. Pick an element $z_1\in F$ such that $vz_1 = \gamma_1$. %By \cite[Lemma 2.2]{FVK_TF}, $z_1$ is transcendental over $K$, $vK(z_1) = vK \oplus\bZ vx_1$ and $K(z_1)v = Kv$.
If $F|K(z_1)$ is separable, we set $x_1 := z_1$, and we are done.

Otherwise, $F|K(z_1)$ is inseparable, and $z_1\in F^p(\cB)$ by Lemma~\ref{sepextnclass} again. As before, pick an element $t_1\in F$ such that $F|K(t_1)$ is separable. It follows that $t_1\not\in F^p(\cB)$. For similar reasons as before, there is $c\in F^p$ such that $vct_1 > vz_1$. Since $c\in F^p$, we get that $ct_1\not\in F^p(\cB)$. Thus, $ct_1 + z_1\not\in F^p(\cB)$ either. By Lemma~\ref{sepextnclass} again, $F|K(ct_1 + z_1)$ is separable. Moreover, $v(ct_1 + z_1) = vz_1 = \gamma_1$. We set $x_1 := ct_1 + z_1$, and we are done.
\end{proof}
%Then $F|K(x_1)$ is separable. Moreover, $vx_1 = v(ct_1 + w_1) = vw_1 = \gamma_1$. Hence, $vx_1$ is rationally independent over $vK$, and therefore $vK(x_1) = vK \oplus \bZ vx_1$. Thus, in both cases, we have again successfully reduced the problem of finding an appropriate set $\cT$ for $(F|K(x_1), v)$ which now follows from the induction hypothesis.

%Now suppose $Fv|Kv$ is separable and $vF/vK$ is torsion free. Start with a separating transcendence basis $\{\bar{w}_1, \ldots, \bar{w}_s\}$ of $Fv|Kv$. Since $K(\cT)v = Kv(\bar{w}_1, \ldots, \bar{w}_s)$, it follows that $Fv|K(\cT)v$ is finite and separable. On the other hand, since $F|K(\cT)$ is finite, it follows by the fundamental inequality that $vF/vK(\cT)$ is also finite. Thus,
%$$vF = \Gamma \oplus \bigoplus_{1\le i\le r} \bZ \gamma_i,$$
%where $\gamma_i\in vK(\cT)$ for $1\le i \le r$, and $\Gamma/vK$ is finite. Since $vF/vK$ is torsion free, it follows that $\Gamma/vK$ is also torsion free. Combining this with the fact that $\Gamma/vK$ is finite and hence a torsion group, we get $\Gamma = vK$. Choosing these $\gamma_i$'s as the rationally independent elements over $vK$ to start with, we then get that $vF = vK(\cT)$.
%\end{proof}

Now we prove Theorem~\ref{sepdefextn}.

\begin{proof}
If $v$ is trivial on $F$ (and hence on $K$), then $F\cong Fv$ and $K\cong Kv$. With these isomorphisms, we identify $F$ with $Fv$ and $K$ with $Kv$. Given $F|K$ separable, we choose a separating transcendence basis $\cT$ of $F|K$. By the above identification, $Fv|Kv$ is also separable and $\cT$ is a separating transcendence basis of $Fv|Kv$. Trivially, $vF = vK(\cT) = \{0\}$. Moreover, since $K(\cT)v\cong Kv(\cT)$, it follows that $Fv|K(\cT)v$ is separable.

Now suppose $v$ is nontrivial on $F$. Choose a transcendence basis $\{\bar{y}_1, \ldots, \bar{y}_s\}$ of $Fv|Kv$, and a maximal rationally independent set of elements $\{\gamma_1, \ldots, \gamma_r\}$ of $vF$ over $vK$. Choose a subset $\cT = \{x_1, \ldots, x_r, y_1, \ldots, y_s\}$ of $F$ as given by Lemma~\ref{sepdefextnlem}. By \cite[Lemma 2.2]{FVK_TF}, the set $\cT$ is algebraically independent over $K$ with
\begin{eqnarray*}
vK(\cT) & = & vK \oplus \bigoplus_{1\le i \le r} \bZ vx_i\;, \;\;\;\;\mbox{and}\\
K(\cT)v & = & Kv(y_1v, \ldots, y_sv).
\end{eqnarray*}
Since $(F|K, v)$ is without transcendence defect and $F|K(\cT)$ is separable, it follows that $\cT$ is in fact a separating transcendence basis of $F|K$.
%Since $(F|K, v)$ is without transcendence defect, it follows that $\cT$ is a transcendence basis of $F|K$. Moreover, since $F|K(\cT)$ is separable, it follows that $\cT$ is in fact a separating transcendence basis of $F|K$.

Now assume that additionally $vF/vK$ is torsion free and $Fv|Kv$ is separable. Since $K(\cT)v = Kv(\bar{y}_1, \ldots, \bar{y}_s)$ and $Fv|K(\cT)v$ is finite, it follows that by choosing $\{\bar{y}_1, \ldots, \bar{y}_s\}$ to be a separating transcendence basis of $Fv|Kv$ to start with, we get that $Fv|K(\cT)v$ is separable. On the other hand, since $vF/vK(\cT)$ is finite and $vK(\cT)/vK$ is finitely generated, it follows that $vF/vK$ is also finitely generated. Thus,
$$vF = \Gamma \oplus \bigoplus_{1\le i\le r} \bZ \delta_i\;,$$
where $\delta_i\in vF$ for $1\le i \le r$, and $\Gamma/vK$ is finite. Since $vF/vK$ is torsion free by assumption, it follows that $\Gamma/vK$ is also torsion free. Combining this with the fact that $\Gamma/vK$ is finite and hence a torsion group, we get $\Gamma = vK$. Thus, by choosing the $\gamma_i$'s as the $\delta_i$'s to start with, we get that $vF = vK(\cT)$.
\end{proof}

One important application of the above theorem is the following result for separably defectless fields.

\begin{cor}		\label{sepdefless}
Let $(F|K, v)$ be a separable valued function field without transcendence defect. If, in addition, $(K, v)$ is a separably defectless field and $vK$ is cofinal in $vF$, then the extension $(F|K(\cT), v)$ is defectless, where $\cT$ is a standard valuation separating transcendence basis of $F|K$ as in Theorem~\ref{sepdefextn}.
\end{cor}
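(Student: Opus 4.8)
The plan is to reduce the defectlessness of $(F|K(\cT), v)$ to the separable defectlessness of $(K,v)$ by showing that $F|K(\cT)$ is a \emph{separable} finite extension and that $(K(\cT)|K, v)$ is itself defectless, and then to invoke the last bullet of Lemma~\ref{defless} together with a transitivity argument for the defect. Concretely, I would proceed as follows. First, apply Theorem~\ref{sepdefextn} to obtain a separating transcendence basis $\cT = \{x_1,\ldots,x_r,y_1,\ldots,y_s\}$ of $F|K$ with $F|K(\cT)$ separable, $vK(\cT) = vK \oplus \bigoplus_i \bZ vx_i$ and $K(\cT)v = Kv(y_1v,\ldots,y_sv)$; since $F|K$ is without transcendence defect, this $\cT$ is a standard valuation (separating) transcendence basis, so in particular $(F|K(\cT), v)$ is an \emph{algebraic} separable extension — indeed finite, as $F|K$ is a function field.

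Next I would observe that $(K(\cT)|K, v)$ is a defectless extension. Because $\cT$ is a standard valuation transcendence basis, $(K(\cT), v)$ is obtained from $(K,v)$ by adjoining a transcendence basis $\{x_i\}$ whose values are rationally independent over $vK$ together with a transcendence basis $\{y_j\}$ whose residues are algebraically independent over $Kv$; such extensions are valuation-theoretically ``rigid'' (the Gauss/functional valuation), and the fundamental inequality is an equality for every finite subextension — there is no defect in a purely transcendental extension of this standard type. (This is where I would cite the relevant structure result on standard valuation transcendence bases; alternatively one checks directly that $vK(\cT)/vK$ and $K(\cT)v/Kv$ ``use up'' the full transcendence degree, leaving no room for defect.) Then, using that $vK$ is cofinal in $vF$ and hence in $vK(\cT)$, together with the separable defectlessness of $(K,v)$, I would argue that the henselization $(K(\cT)^h, v)$ is separably defectless: the cofinality hypothesis is exactly what lets one transfer separable defectlessness from $K$ through a standard valuation transcendence extension (this kind of transfer is the content of the results in \cite{FVK_ER}, \cite{FVK_TF} on stability under function field extensions).

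Finally, since $F|K(\cT)$ is finite and separable, and $(K(\cT), v)$ — or its henselization — is separably defectless, the last bullet of Lemma~\ref{defless} gives that every finite separable extension of $K(\cT)$ is defectless; in particular $(F|K(\cT), v)$ is defectless, which is the assertion. The main obstacle I anticipate is the cofinality step: one must be careful that separable defectlessness of $(K,v)$ genuinely propagates to $(K(\cT), v)$ — defectlessness of a valued field is \emph{not} in general preserved under arbitrary transcendental extensions, and it is precisely the hypothesis ``$vK$ cofinal in $vF$'' (hence in $vK(\cT)$, since $vK(\cT)/vK$ has torsion-free quotient but the $vx_i$ may be large) that must be used to rule out the bad behaviour. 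Getting that propagation cleanly — rather than the routine reductions around it — is the crux of the argument.
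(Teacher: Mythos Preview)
Your approach is essentially the same as the paper's: transfer separable defectlessness from $(K,v)$ to $(K(\cT),v)$ via \cite[Theorem~1.1]{FVK_ER} using that $vK$ is cofinal in $vK(\cT)$, and then use that $F|K(\cT)$ is finite and separable to conclude that this extension is defectless. Drop the detour about ``$(K(\cT)|K,v)$ is a defectless extension'' --- defect is only defined for \emph{finite} extensions, so that phrase is meaningless for a transcendental extension and plays no role; and note that the final step follows directly from the \emph{definition} of a separably defectless field, not from the last bullet of Lemma~\ref{defless} (which says the top field is again separably defectless, not that the extension is defectless).
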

\begin{proof}
Since $(K(\cT)|K, v)$ is a valued function field without transcendence defect by Corollary~\ref{svtb->wtd}, $(K, v)$ is separably defectless and $vK$ is cofinal in $vK(\cT)$, it follows by \cite[Theorem 1.1]{FVK_ER} that $(K(\cT), v)$ is separably defectless. Moreover, by Theorem~\ref{sepdefextn}, the extension $F|K(\cT)$ is separable and finite. Consequently, $(F|K(\cT), v)$ is a defectless extension.
\end{proof}

\subsection{Algebra of Separably Tame Fields}		\label{sectastf}
We mention some of the algebraic properties of separably tame fields in this section, in addition to Theorem~\ref{stt3}. The details and the proofs of the statements can be found in \cite{FVK_thesis}, \cite{FVK_TF}.

As mentioned in the Introduction, it is an easy observation that separably tame fields of characteristic 0 are, in fact, tame fields. So, the interesting case is that of positive characteristic. Also, a separably tame field is always henselian and separably defectless since every finite separable-algebraic extension of a separably tame field is a tame, and thus defectless, extension. The converse is however not true; it needs additional assumptions on the value group and the residue field. Under the assumptions that we are going to use frequently, the converse will even hold for ``separable-algebraically maximal'' in place of ``henselian and separably defectless'', as mentioned in the following lemma.
%(Note that ``henselian and separably defectless'' implies ``separable-algebraically maximal''.)
\begin{lem}		\label{septame}
Take a nontrivially valued field $(K,v)$ of characteristic $p>0$. The following assertions are equivalent:
\begin{itemize}
\item  $(K,v)$ is separably tame,
\item  $(K,v)$ is separable-algebraically maximal, $vK$ is $p$-divisible and $Kv$ is perfect.
\end{itemize}
\end{lem}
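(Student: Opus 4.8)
The plan is to prove the two implications separately, relying on the characterization of tame fields in terms of the ramification field together with standard facts about ramification theory, and on the characterization of separable-algebraic maximality from Theorem~\ref{exmaxvfth}. For the forward direction, suppose $(K,v)$ is separably tame, i.e. $K^{\sep}=K^r$, the ramification field of $K^{\sep}|K$. Since every separably tame field is henselian and separably defectless (as every finite separable-algebraic extension is tame, hence defectless), Theorem~\ref{exmaxvfth} already gives that $(K,v)$ is separable-algebraically maximal — alternatively one quotes that henselian separably defectless implies separable-algebraically maximal, stated in the excerpt. It remains to show $vK$ is $p$-divisible and $Kv$ is perfect. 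Both follow from applying the defining conditions (TE1) and (TE2) to suitable finite separable subextensions of $K^{\sep}|K$: given $\gamma\in vK$, the extension generated by a $p$-th root of an element of value $\gamma$ is separable (the polynomial $X^p-a$ has separable analogue after a Hensel-type adjustment, or one uses that $vK^{\sep}$ is the divisible hull of $vK$ by Lemma~\ref{sep_perf_hull} and that (TE1) forbids ramification index divisible by $p$), so $p$-divisibility of $vK$ is forced; similarly, if $Kv$ had a proper purely inseparable extension, one could realize a corresponding residue extension inside a finite separable extension of $K$ contradicting (TE2). More cleanly: by Lemma~\ref{sep_perf_hull}, $K^{\sep}v$ contains $(Kv)^{\sep}$ and, when $v$ is nontrivial, equals $(Kv)^{\alg}$; since $K^{\sep}=K^r$ and the residue field of the ramification field is the separable closure of $Kv$ with the same underlying reasoning forcing $(Kv)^{\alg}=(Kv)^{\sep}$, we get $Kv$ perfect; and $vK^r/vK$ has no $p$-torsion, while $vK^{\sep}=\frac1{p^\infty}vK$, forcing $vK$ to be $p$-divisible.

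For the converse, assume $(K,v)$ is separable-algebraically maximal with $vK$ $p$-divisible and $Kv$ perfect; we must show every finite subextension $E|K$ of $K^{\sep}|K$ satisfies (TE1), (TE2), (TE3). Condition (TE3) is immediate: a separable-algebraically maximal field is henselian, hence $(E|K,v)$ has no defect — more precisely, separable-algebraically maximal implies separably defectless under henselianity, or one argues directly that an immediate separable-algebraic extension would contradict maximality and the standard decomposition of finite extensions into an unramified-then-totally-ramified-then-defect tower collapses the defect part. For (TE2): $Ev|Kv$ is a finite extension of the perfect field $Kv$, hence automatically separable. For (TE1): we need $(vE:vK)$ prime to $p$. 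Here one uses that since $vK$ is $p$-divisible, any element of $vE\setminus vK$ that contributes a $p$-part to the ramification index would, via Kummer-type or Artin–Schreier-type considerations on the totally ramified part of $E|K$, produce either an immediate extension (contradicting maximality) or a defect — this is the step where $p$-divisibility of $vK$ and separable-algebraic maximality interact. Concretely, the totally ramified part with ramification index divisible by $p$ in characteristic $p$ with perfect residue field necessarily carries defect (by the Lemma of Ostrowski / the theory in \cite{FVK_TF}), and a separably defectless henselian field cannot have such a subextension.

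The main obstacle I expect is the converse direction, specifically establishing (TE1) — showing that separable-algebraic maximality together with $p$-divisibility of $vK$ rules out ramification indices divisible by $p$ in finite separable extensions. The subtlety is that in equal characteristic $p$, a "wildly ramified" separable extension (ramification index divisible by $p$) of a field with $p$-divisible value group and perfect residue field is forced to have nontrivial defect by Ostrowski's lemma, and then separable-algebraic maximality (which gives separably defectless, hence no defect in separable extensions) yields a contradiction; assembling this cleanly requires invoking the structure of the ramification filtration and the defect-theoretic results from \cite{FVK_TF} and \cite{FVK_AS} rather than a bare-hands computation. I would organize the argument around the equivalence "separably tame $\iff$ $K^{\sep}=K^r$" from \cite[Lemma 2.17(a)]{FVK_TF} and reduce everything to controlling $vK^r/vK$ and $K^rv/Kv$, which under our hypotheses forces $K^r=K^{\sep}$ and conversely.
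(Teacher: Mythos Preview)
The paper does not actually prove this lemma; it is stated in Section~\ref{sectastf} with the note that ``the details and the proofs of the statements can be found in \cite{FVK_thesis}, \cite{FVK_TF}.'' So there is no in-paper argument to compare against. That said, your plan contains a genuine gap in the converse direction, and you have the difficulty inverted.

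Condition (TE1) is the \emph{easy} part, not the main obstacle. Since $vE$ is torsion free and $vK$ is $p$-divisible, the finite quotient $vE/vK$ can have no $p$-torsion: if $p\gamma\in vK$ then $p\gamma=p\delta$ for some $\delta\in vK$, whence $\gamma=\delta\in vK$. Thus $(vE:vK)$ is prime to $p$ with no appeal to separable-algebraic maximality, Artin--Schreier extensions, or defect.

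The real problem is your handling of (TE3). You assert that ``separable-algebraically maximal implies separably defectless under henselianity,'' and you invoke this again in your (TE1) argument. But the paper itself warns, just after Lemma~\ref{defless}, that this implication fails in general and that ``an additional condition is needed, see \cite{FVK_AS}.'' That additional condition is precisely the pair of hypotheses $vK$ $p$-divisible and $Kv$ perfect, and showing that \emph{under these hypotheses} separable-algebraic maximality forces separable defectlessness is the actual content of the converse --- it is not something you can quote. Your tower $K\subset E^i\subset E^r\subset E$ is a reasonable start: once (TE1) and (TE2) are in hand one does get that $E|E^r$ is immediate. But this is an immediate extension of $E^r$, not of $K$, and you give no mechanism for descending it to a proper immediate separable-algebraic extension of $K$ itself, which is what would contradict maximality. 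One clean route (in the spirit of Lemma~\ref{phtd}) is to pass to the perfect hull: under the hypotheses $K^{1/p^\infty}|K$ is immediate, and a proper immediate algebraic extension of $K^{1/p^\infty}$ can be pushed down via Frobenius to a proper immediate separable-algebraic extension of $K$; then the tame-field characterization from \cite{FVK_TF} together with Lemma~\ref{phtd} finishes. Your closing paragraph gestures at the $K^r=K^{\sep}$ approach, but without this descent step the argument remains incomplete.
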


As an immediate corollary, we get the following:

\begin{cor}		\label{samkap}
Every separable-algebraically maximal Kaplansky field is a separably tame field, but not conversely.
\end{cor}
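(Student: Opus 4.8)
The plan is to read this off Lemma~\ref{septame}, handling separately the cases that lemma does not literally cover. Let $(K,v)$ be a separable-algebraically maximal Kaplansky field, and let $p$ be the characteristic exponent of $Kv$.

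First I would dispose of the degenerate and residue-characteristic-zero cases. If $v$ is trivial on $K$, then $(K,v)$ is separably tame by the observation in the Introduction. If char $Kv = 0$ (so $p = 1$ and char $K = 0$), then separable-algebraic maximality gives that $(K,v)$ is henselian, residue characteristic $0$ gives that it is defectless, characteristic $0$ gives that $Kv$ is perfect, and conditions (TE1), (TE2) hold automatically (the ramification indices are prime to $1$, and residue extensions in characteristic $0$ are separable); hence every finite subextension of $K^{\sep}|K$ is tame, so $(K,v)$ is separably tame. If char $K = 0$ but char $Kv = p > 0$ (mixed characteristic), then, $K$ being perfect, every algebraic extension is separable, so separable-algebraic maximality coincides with algebraic maximality; thus $(K,v)$ is an algebraically maximal Kaplansky field, hence tame by \cite[Corollary~3.3]{FVK_TF}, and a tame field is a fortiori separably tame.

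The remaining, main case is char $K = p > 0$ with $v$ nontrivial. By definition of a Kaplansky field, $vK$ is $p$-divisible, and $Kv$ has no finite extension of degree divisible by $p$; in particular $Kv$ is perfect, since an imperfect field of characteristic $p$ would carry a degree-$p$ purely inseparable extension. Combined with the hypothesis that $(K,v)$ is separable-algebraically maximal, this verifies the conjunction in the second item of Lemma~\ref{septame}, so $(K,v)$ is separably tame.

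For the failure of the converse I would simply point to a tame field that is not Kaplansky. Tame fields are separably tame, and the residue field of a tame field may well admit finite \emph{separable} extensions of degree divisible by $p$: for instance a maximal valued field with value group $\bQ$ and residue field $\bF_p$ is tame (it is henselian, defectless, has $p$-divisible value group and perfect residue field; see \cite{FVK_TF}) but not Kaplansky, because $\bF_p$ has the separable extension $\bF_{p^p}$ of degree $p$. I do not anticipate a genuine obstacle: the corollary is bookkeeping on top of Lemma~\ref{septame}, the only point requiring attention being to route the residue-characteristic-$0$ and mixed-characteristic cases through the tameness results, since Lemma~\ref{septame} is stated only in characteristic $p$.
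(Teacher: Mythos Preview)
Your argument is correct and matches the paper's intent: the corollary is stated there without proof, as ``an immediate corollary'' of Lemma~\ref{septame}, with the failure of the converse justified in the Introduction by exactly the observation you make (a separably tame field may have a residue field admitting separable extensions of degree divisible by~$p$). Your explicit case split for trivial valuation, residue characteristic~$0$, and mixed characteristic is more careful than the paper, which tacitly restricts attention to the positive equicharacteristic case and treats the others as already disposed of; in the main case your proof and the paper's coincide.
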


The following result is crucial for our work because it provides a nice passage from separably tame fields to tame fields.

\begin{lem}		\label{phtd}
$(K, v)$ is a separably tame field if and only if $(K^{1/p^\infty}, v)$ is a tame field. In this case, if $v$ is nontrivial, then $(K, v)$ is dense in $(K^{1/p^\infty}, v)$.
\end{lem}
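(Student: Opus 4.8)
The plan is to prove the equivalence in both directions, relying on Lemma~\ref{septame} and the characterization of tame fields as those whose algebraic closure equals the ramification field $K^r$ of $K^{\sep}|K$. First suppose $(K,v)$ is separably tame. By Lemma~\ref{septame} (assuming $v$ nontrivial; the trivial case is immediate since everything there is both separably tame and tame) we know $(K,v)$ is separable-algebraically maximal, $vK$ is $p$-divisible and $Kv$ is perfect. I would first compute the value group and residue field of $(K^{1/p^\infty},v)$ via Lemma~\ref{sep_perf_hull}: $vK^{1/p^\infty} = \frac{1}{p^\infty}vK = vK$ since $vK$ is already $p$-divisible, and $K^{1/p^\infty}v = (Kv)^{1/p^\infty} = Kv$ since $Kv$ is perfect. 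Hence the extension $(K^{1/p^\infty}|K,v)$ is immediate. To show $(K^{1/p^\infty},v)$ is tame, by Lemma~\ref{septame} applied to $K^{1/p^\infty}$ (which is perfect, so separably tame $\Leftrightarrow$ tame there) it suffices to show $K^{1/p^\infty}$ is algebraically maximal, i.e.\ admits no proper immediate algebraic extension. Since $K^{1/p^\infty}$ is perfect, every algebraic extension is separable, so algebraically maximal $=$ separable-algebraically maximal, and this should follow from the separable-algebraic maximality of $K$ together with the fact that $K^{1/p^\infty}|K$ is purely inseparable hence has no effect on the separable-algebraic side — an immediate algebraic extension of $K^{1/p^\infty}$ would, after taking the compositum with $K^{\sep}$ inside a big field, contradict separable-algebraic maximality of $K$; this is where I would invoke the density assertion or argue directly that $K^{\sep}$ and $K^{1/p^\infty}$ behave well together (e.g.\ $K^{1/p^\infty}\cdot K^{\sep}$ is the algebraic closure, so $K^{1/p^\infty}$ having a proper immediate algebraic extension pulls back to $K$ having a proper immediate separable-algebraic extension).

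For the converse, suppose $(K^{1/p^\infty},v)$ is tame. Then it is henselian with $vK^{1/p^\infty}$ $p$-divisible and $K^{1/p^\infty}v$ perfect (these follow from the definition of tame, or from Lemma~\ref{septame} with the roles reversed). By Lemma~\ref{sep_perf_hull}, $vK^{1/p^\infty} = \frac{1}{p^\infty}vK$, which is $p$-divisible automatically, so I need to extract $p$-divisibility of $vK$ itself: since $vK \hookrightarrow \frac{1}{p^\infty}vK$ and in fact one checks directly that $\frac{1}{p^\infty}vK$ $p$-divisible gives no information, so instead I would argue $p$-divisibility of $vK$ from henselianity of $K$ plus the structure — actually the cleanest route is: $(K,v)\subseteq(K^{1/p^\infty},v)$ and $K^{1/p^\infty}|K$ is purely inseparable, so $vK^{1/p^\infty}/vK$ is a $p$-group and $K^{1/p^\infty}v|Kv$ is purely inseparable; from $K^{1/p^\infty}v$ perfect I need $Kv$ perfect, which requires showing no element of $Kv$ is a genuine $p$-th power obstruction, i.e.\ that $Kv = (Kv)^p$ — this follows because if $a\in Kv$ had $a^{1/p}\notin Kv$, then... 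I would need density here: Lemma~\ref{phtd}'s own density claim (proved once $K$ is dense in $K^{1/p^\infty}$) forces $Kv = K^{1/p^\infty}v$ and $vK = vK^{1/p^\infty}$, so $vK$ is $p$-divisible and $Kv$ is perfect. Then henselianity of $K$ follows since henselianity descends along $K\subseteq K^{1/p^\infty}$ with $K$ relatively algebraically closed in $K^{1/p^\infty}$ up to purely inseparable stuff, or directly because $K^h\subseteq K^{1/p^\infty}$ would be immediate separable-algebraic hence... and separable-algebraic maximality of $K$ follows because a proper immediate separable-algebraic extension of $K$ would generate, together with $K^{1/p^\infty}$, a proper immediate algebraic extension of $K^{1/p^\infty}$, contradicting tameness. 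Then Lemma~\ref{septame} gives that $(K,v)$ is separably tame.

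The main obstacle I anticipate is establishing the density statement and, simultaneously, the transfer of $p$-divisibility of the value group and perfectness of the residue field in the converse direction: the naive computations give $vK^{1/p^\infty} = \frac{1}{p^\infty}vK$ and $K^{1/p^\infty}v = (Kv)^{1/p^\infty}$, which are $p$-divisible and perfect for \emph{any} $vK$ and $Kv$, so the hypothesis "tame" has to be used more substantially — namely through \emph{defectlessness} of $K^{1/p^\infty}$ and the fundamental inequality applied to the purely inseparable steps $K^{1/p^n}|K^{1/p^{n-1}}$. The cleanest argument for density is: take $a\in K^{1/p^\infty}$, so $a^{p^n}\in K$ for some $n$; the extension $K(a)|K$ is purely inseparable of degree $\le p^n$, and since $(K^{1/p^\infty},v)$ is tame hence defectless — and $(K^{1/p^\infty}|K,v)$ is immediate by the first paragraph's computation once we know $vK$ is $p$-divisible and $Kv$ perfect, which in the converse we must get from Lemma~\ref{septame} applied to... — one shows using the henselianity of $K$ (which I would establish first, separately) that $a$ can be approximated arbitrarily well by elements of $K$. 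I would structure the converse so that henselianity of $(K,v)$ is proved first (via $K^h$ being an immediate separable-algebraic subextension of the henselian $K^{1/p^\infty}$), then density, then read off $p$-divisibility and perfectness from density, then separable-algebraic maximality, then conclude by Lemma~\ref{septame}.
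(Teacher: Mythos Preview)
The paper does not actually prove Lemma~\ref{phtd}: it is one of the results in Section~\ref{sectastf} whose proofs are explicitly deferred to \cite{FVK_thesis} and \cite{FVK_TF}, so there is no in-paper argument to compare your plan against.

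That said, your plan has a genuine circularity problem in the converse direction that you flag but do not resolve. You propose to deduce $p$-divisibility of $vK$ and perfectness of $Kv$ from density of $K$ in $K^{1/p^\infty}$; but your density argument (via defectlessness of the purely inseparable steps, or via the Newton-style iteration) already presupposes that $(K^{1/p^\infty}|K,v)$ is immediate, which is precisely what $p$-divisibility and perfectness encode. Knowing only that $K^{1/p^\infty}$ is tame gives no direct information about $vK$ or $Kv$, since $\frac{1}{p^\infty}vK$ and $(Kv)^{1/p^\infty}$ are automatically $p$-divisible and perfect regardless. Similarly, in the forward direction, the assertion that a proper immediate algebraic extension of $K^{1/p^\infty}$ ``pulls back'' to a proper immediate separable-algebraic extension of $K$ is stated without justification, and the descent is not automatic.

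Both gaps disappear if you use the ramification-field characterization already quoted in the Introduction (from \cite[Lemma~2.17(a)]{FVK_TF}): $(K,v)$ is separably tame iff $K^r=K^{\sep}$, and $(K^{1/p^\infty},v)$ is tame iff $(K^{1/p^\infty})^r=K^{\alg}$. Since $K^{1/p^\infty}|K$ is purely inseparable, restriction identifies $\mathrm{Gal}(K^{\alg}|K^{1/p^\infty})$ with $\mathrm{Gal}(K^{\sep}|K)$ together with their ramification subgroups, so $(K^{1/p^\infty})^r = K^r\cdot K^{1/p^\infty}$; linear disjointness of $K^{\sep}$ and $K^{1/p^\infty}$ over $K$ then gives the equivalence at once. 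Density can be proved \emph{afterwards}: once $(K,v)$ is known to be separably tame, Lemma~\ref{septame} gives $vK$ $p$-divisible and $Kv$ perfect, so $(K^{1/p^\infty}|K,v)$ is immediate, and then your iteration (or a pseudo-Cauchy argument using separable-algebraic maximality of $K$) shows each $a$ with $a^{p^n}\in K$ lies in $K^c$.
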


The following is another important lemma on separably tame fields that we will need in several instances.

\begin{lem}		\label{Xsrac}
Let $(L, v)$ be a separably tame field and $K\subseteq L$ a relatively algebraically closed subfield of $L$. If the residue field extension $Lv|Kv$ is algebraic, then $(K, v)$ is also a separably tame field, and moreover, $vL/vK$ is torsion free and $Lv = Kv$.
\end{lem}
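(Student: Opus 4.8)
The plan is to reduce the statement to the analogous one for tame fields by passing to perfect hulls, using Lemma~\ref{phtd}. If $v$ is trivial on $L$, then $Lv|Kv$ being algebraic forces $L|K$ to be algebraic, and relative algebraic closedness forces $L=K$; and if $\operatorname{char}L=0$ the field is already tame, so there is nothing new. Hence assume $\operatorname{char}K=\operatorname{char}Kv=p>0$ and $v$ nontrivial on $L$, and put $L':=L^{1/p^\infty}$, $K':=K^{1/p^\infty}$. Then $K'$ is relatively algebraically closed in $L'$: an $a\in L'$ algebraic over $K'$ is algebraic over $K$ and satisfies $a^{p^n}\in L$ for some $n$, hence $a^{p^n}\in K$ and $a\in K'$. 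By Lemma~\ref{sep_perf_hull} we have $L'v=(Lv)^{1/p^\infty}$ and $K'v=(Kv)^{1/p^\infty}$, so $L'v|K'v$ is again algebraic, and $(L',v)$ is tame by Lemma~\ref{phtd}.

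The heart of the argument is the tame analogue: \emph{if $(M,v)$ is tame, $E\subseteq M$ is relatively algebraically closed and $Mv|Ev$ is algebraic, then $(E,v)$ is tame, $vM/vE$ is torsion free, and $Mv=Ev$}. I would prove this in four steps. (i) $(E,v)$ is henselian, because $E^h$ embeds over $E$ into the henselian field $M$ and, being algebraic over $E$, must equal $E$. (ii) $Mv=Ev$: Hensel's Lemma lifts every separable monic residue polynomial, so $Ev$ is separably closed in $Mv$; a tame field is algebraically maximal, hence perfect (otherwise, since $vM$ is $p$-divisible and $Mv$ perfect, $M^{1/p}|M$ would be a proper immediate algebraic extension), so a lift $d\in\cO_E$ of any $\bar d\in Ev$ is a $p$-th power $f^p$ in $M$, where $f$ is algebraic over $E$ and therefore lies in $E$, giving $\bar d^{1/p}=fv\in Ev$; thus $Ev$ is perfect and the purely inseparable extension $Mv|Ev$ is trivial. (iii) $vM/vE$ is torsion free: for a prime $\ell$ and $\gamma\in vM$ with $\ell\gamma\in vE$, choose $b\in E$ with $vb=\ell\gamma$ and $a\in M$ with $va=\gamma$; then $u:=b/a^\ell\in\cO_M^\times$, $uv\in Mv=Ev$ lifts to $u_0\in\cO_E^\times$, and $b/u_0=a^\ell(u/u_0)$ with $u/u_0\in 1+\fm_M$; the factor $u/u_0$ is an $\ell$-th power in $M$ (by Hensel's Lemma if $\ell\ne p$, by perfectness if $\ell=p$), so $b/u_0\in E$ is an $\ell$-th power in $M$, its $\ell$-th root lies in $M$, is algebraic over $E$, hence lies in $E$, and has value $\gamma$; so $\gamma\in vE$. (iv) Finally $vE$ is $p$-divisible (as $vM$ is, by (iii)) and $Ev$ is perfect, so $(E,v)$ is tame provided it is algebraically maximal; this is the remaining point, and I would argue that a proper immediate algebraic extension $E^*|E$, which necessarily satisfies $E^*\not\subseteq M$ since $E$ is relatively algebraically closed in $M$, produces a proper \emph{immediate} algebraic extension $ME^*|M$, contradicting the algebraic maximality of $M$.

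Transferring back: applying the tame analogue to $(L',v)$ and $K'$ shows $(K',v)=(K^{1/p^\infty},v)$ is tame, $vL'/vK'$ is torsion free, and $L'v=K'v$. By Lemma~\ref{phtd}, $(K,v)$ is separably tame. Since $Lv$ is perfect (Lemma~\ref{septame}) and $Kv$ is perfect (Lemma~\ref{septame} applied to the now separably tame $(K,v)$; when $v$ is trivial on $K$ one instead reads off $Lv=Kv$ directly from the hypotheses), the identity $(Lv)^{1/p^\infty}=L'v=K'v=(Kv)^{1/p^\infty}$ gives $Lv=Kv$. And if $\gamma\in vL$ with $m\gamma\in vK$ for some $m\ge 1$, then torsion freeness of $vL'/vK'$ places $\gamma$ in $vK'=\frac{1}{p^\infty}vK$, so $p^k\gamma\in vK$ for some $k$; as $vK$ is $p$-divisible and $vL$ is torsion free, $\gamma\in vK$, so $vL/vK$ is torsion free.

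I expect step (iv) --- the inheritance of algebraic maximality --- to be the main obstacle: pinning down the value group and residue field of the compositum $ME^*$ is not a formal manipulation but genuinely relies on the structure theory of tame fields (that all of their algebraic extensions are defectless), which is precisely the kind of deeper valuation-theoretic input the paper emphasizes.
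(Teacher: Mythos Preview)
The paper does not give its own proof of this lemma; it is quoted from \cite{FVK_TF}. Your reduction to the tame analogue via perfect hulls (Lemma~\ref{phtd}) is the natural route and your steps (i)--(iii) are correct. The genuine gap is in step (iv). You claim that a proper immediate algebraic extension $E^*|E$ yields a proper \emph{immediate} algebraic extension $ME^*|M$, contradicting algebraic maximality of $M$. But this cannot work: $M$ being tame is precisely what makes $M$ algebraically maximal, so a proper algebraic extension of $M$ is \emph{never} immediate. You correctly flag (iv) as the obstacle and correctly name defectlessness of tame fields as the relevant input, but the mechanism you propose points in the wrong direction.

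Here is how defectlessness is actually used. Since $E$ is perfect (as you noted) and relatively algebraically closed in $M$, the extension $M|E$ is regular, so $M$ and $E^{\alg}$ are linearly disjoint over $E$; in particular $[ME^*:M]=[E^*:E]=p^k$ (a $p$-power by Ostrowski, since $E$ is henselian and $E^*|E$ is immediate). As $M$ is tame, $ME^*|M$ is a tame extension: defectless with ramification index prime to $p$. Hence $(v(ME^*):vM)=1$ and $[(ME^*)v:Mv]=p^k>1$, so $ME^*|M$ is \emph{not} immediate. Now lift a primitive element of the separable residue extension $(ME^*)v|Ev$ via Hensel's Lemma to obtain $\alpha\in ME^*$ algebraic over $E$ with $[E(\alpha):E]=p^k$ and $ME(\alpha)=ME^*$. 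Linear disjointness then forces $[E(\alpha)E^*:E]=[M\cdot E(\alpha)E^*:M]=[ME^*:M]=p^k$, so $E^*\subseteq E(\alpha)$ and by equality of degrees $E^*=E(\alpha)$. But $E(\alpha)v\supsetneq Ev=E^*v$, a contradiction. Thus no proper immediate algebraic $E^*|E$ exists, $(E,v)$ is algebraically maximal, and hence tame.
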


\subsection{Model Theoretic Preliminaries}		\label{modprelim}
As mentioned in the Introduction, our primary language for talking about valued fields is $\cL_{\VF} := \{+, - , \cdot, ^{-1}, 0, 1, \cO\}$, where $\cO$ is a binary relation symbol for valuation divisibility, interpreted as follows:
$$\cO(x, y) :\iff vx\ge vy.$$
We prefer to write ``$vx\ge vy$'' in place of ``$\cO(x, y)$''. For convenience, we define the following relations:
\begin{eqnarray*}
vx > vy & \leftrightarrow & vx\ge vy \;\wedge\; \neg(vy\ge vx)\\
vx = vy & \leftrightarrow & vx\ge vy \;\wedge\; vy\ge vx.
\end{eqnarray*}
The definitions for the reversed relations $vx \le vy$ and $vx < vy$ are obvious.

It is an easy exercise to axiomatize the theory $T_{\VF}$ of valued fields in $\cL_{\VF}$, see \cite{FVK_TF}. The following facts are well-known; the easy proofs are left to the
reader.
\begin{lem}		\label{elprvgrf}
Take a valued field $(K,v)$.
\begin{itemize}
\item[a)] For every sentence $\varphi^{og}$ in the language of ordered groups, there is a sentence $\varphi^{og\rightarrow vf}$ in the language of valued fields such that for every valued field $(K, v)$, $\varphi^{og}$ holds in $vK$ if and only if $\varphi^{og\rightarrow vf}$ holds in $(K, v)$.
\item[b)] For every sentence $\varphi^r$ in the language of rings there is a sentence $\varphi^{r\rightarrow vf}$ in the language of valued fields such that for every valued field $(K,v)$, $\varphi^r$ holds in $Kv$ if and only if $\varphi^{r\rightarrow vf}$ holds in $(K, v)$.
\end{itemize}
%If the language of ordered groups or the language of fields are extended by constants, then the language of valued fields can be extended correspondingly so that these assertions remain true.
\end{lem}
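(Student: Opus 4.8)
The plan is to show that both $vK$ and $Kv$ are \emph{uniformly interpretable} (without parameters) in $(K,v)$ regarded as an $\cL_{\VF}$-structure, and then to appeal to the standard fact that a parameter-free interpretation of a structure $N$ in a structure $M$ furnishes a recursive translation $\psi \mapsto \psi^\ast$ of $\cL_N$-sentences into $\cL_M$-sentences with $N \models \psi \iff M \models \psi^\ast$. Since the interpreting formulas exhibited below do not depend on $(K,v)$, this translation works simultaneously for every valued field, which is precisely what a) and b) assert.

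For a): realize $vK$ as the quotient of $K^\times$ (the $\cL_{\VF}$-definable set $\{x : x \neq 0\}$) by the $\cL_{\VF}$-definable equivalence relation $E(x,y) :\equiv \cO(x,y) \wedge \cO(y,x)$, i.e.\ $vx = vy$. On this quotient, multiplication of $K$ induces the group operation (because $v(xx') = vx + vx'$), the constant $1$ names the neutral element $0 \in vK$, the field inverse names the group inverse, and the formula $\cO(y,x)$ defines the order $vx \leq vy$. Each interpreting formula is $E$-invariant, and $x/E \mapsto vx$ is an isomorphism from the interpreted structure onto $(vK,+,-,0,<)$. Hence, given an $\cL_\mathrm{OG}$-sentence $\varphi^{og}$, define $\varphi^{og\rightarrow vf}$ by flattening terms into conjunctions of atomic statements about single operations (introducing auxiliary bound variables), relativizing every quantifier to $\{x : x \neq 0\}$, and replacing $=$ by $E$ and $s < t$ by the corresponding $\cO$-formula; an induction on the structure of $\varphi^{og}$ gives $vK \models \varphi^{og} \iff (K,v) \models \varphi^{og\rightarrow vf}$.

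For b): realize $Kv$ as the quotient of $\cO_v = \{x : \cO(x,1)\}$ by the $\cL_{\VF}$-definable equivalence relation $E'(x,y) :\equiv \neg\,\cO(1, x-y)$, which says $v(x-y) > 0$ (and in particular holds when $x = y$). Addition and multiplication of $K$ induce the ring operations on this quotient, the symbols $0$ and $1$ name themselves, every interpreting formula is $E'$-invariant on $\cO_v$, and $x/E' \mapsto xv$ is an isomorphism onto $Kv$. As in a), a ring sentence $\varphi^r$ translates to $\varphi^{r\rightarrow vf}$ by flattening terms, relativizing quantifiers to $\{x : \cO(x,1)\}$, and replacing equality by $E'$, and the analogous induction yields $Kv \models \varphi^r \iff (K,v) \models \varphi^{r\rightarrow vf}$.

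There is no genuine difficulty; the only items needing care are bookkeeping ones, namely (i) that one is interpreting quotient structures, so either one passes to $\cL_{\VF}^{\mathrm{eq}}$ or, as above, one carries the equivalence relations $E$, $E'$ along explicitly and verifies the $E$- respectively $E'$-invariance of each interpreting formula, and (ii) that the languages of ordered groups and of rings contain function symbols, so atomic formulas must be flattened into existentially quantified conjunctions of atomic statements about single operations before the termwise translation is applied. The trivially valued case is no exception: then $vK = \{0\}$ is interpreted as the trivial group and $\cO_v = K$ with $Kv \cong K$, and both translations behave correctly.
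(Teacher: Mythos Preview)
Your proof is correct and is exactly the standard interpretation argument one would expect here; the paper itself states the lemma as a well-known fact and leaves the easy proof to the reader, so there is nothing to compare against beyond noting that your approach is the intended one.
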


As immediate consequences of this lemma, we obtain:
\begin{cor}		\label{vgrfequiv}
If $(K,v)$ and $(L,v)$ are valued fields such that $(K,v)\equiv (L,v)$ in the language of valued fields, then $vK\equiv vL$ in the language of ordered groups, and $Kv\equiv Lv$ in the language of rings (and thus also in the language of fields). The same holds with $\prec$ or $\ec$ in place of $\equiv\,$.
\end{cor}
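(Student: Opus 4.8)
The plan is to deduce all three assertions directly from Lemma~\ref{elprvgrf}, using in the $\prec$ and $\ec$ cases its evident extension to formulas with parameters. For $\equiv$: assume $(K,v)\equiv(L,v)$ in $\cL_{\VF}$ and let $\varphi^{og}$ be an arbitrary $\cL_\mathrm{OG}$-sentence. By Lemma~\ref{elprvgrf}(a), $vK\models\varphi^{og}\iff(K,v)\models\varphi^{og\rightarrow vf}$ and $vL\models\varphi^{og}\iff(L,v)\models\varphi^{og\rightarrow vf}$, and since $\varphi^{og\rightarrow vf}$ is an $\cL_{\VF}$-sentence the two middle terms agree; concatenating yields $vK\models\varphi^{og}\iff vL\models\varphi^{og}$, so $vK\equiv vL$. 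The argument for $Kv\equiv Lv$ in $\cL_\mathrm{R}$ is word for word the same using part~(b), and $\cL_\mathrm{F}$-equivalence follows because every $\cL_\mathrm{F}$-sentence is, modulo the theory of fields, logically equivalent to an $\cL_\mathrm{R}$-sentence (eliminate the inverse operation in the standard way).

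For $\prec$: note first that $(K,v)\prec(L,v)$ entails $(K,v)\subseteq(L,v)$, hence $vK\subseteq vL$ and $Kv\subseteq Lv$ as $\cL_\mathrm{OG}$- resp.\ $\cL_\mathrm{R}$-substructures (the residue map restricts compatibly). Then I would upgrade Lemma~\ref{elprvgrf} to formulas: for each $\cL_\mathrm{OG}$-formula $\psi(x_1,\dots,x_n)$ there is an $\cL_{\VF}$-formula $\psi^{og\rightarrow vf}(y_1,\dots,y_n)$ with $vK\models\psi(va_1,\dots,va_n)\iff(K,v)\models\psi^{og\rightarrow vf}(a_1,\dots,a_n)$ for all $a_i\in K^\times$, and similarly over $\cL_\mathrm{R}$ with residues of elements of $\cO_K$. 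Given $\psi$ and parameters $\gamma_i=va_i\in vK$, the chain $vK\models\psi(\bar\gamma)\iff(K,v)\models\psi^{og\rightarrow vf}(\bar a)\iff(L,v)\models\psi^{og\rightarrow vf}(\bar a)\iff vL\models\psi(\bar\gamma)$, whose middle step uses $(K,v)\prec(L,v)$ together with $\bar a\in K$, then gives $vK\prec vL$; the residue field case is identical.

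For $\ec$: here too $(K,v)\ec(L,v)$ presupposes $(K,v)\subseteq(L,v)$, so $vK\subseteq vL$ and $Kv\subseteq Lv$. The point to record is that the translation of an \emph{existential} $\cL_\mathrm{OG}$-sentence with parameters from $vK$ is again an \emph{existential} $\cL_{\VF}$-sentence with parameters from $K$: an existential quantifier over the value group becomes $\exists y\,(y\neq 0\wedge\cdots)$, and the atomic group formulas become quantifier-free $\cL_{\VF}$-formulas in $\cO$ (clearing denominators one avoids the inverse operation altogether). Hence, if such a sentence holds in $vL$, its translation holds in $(L,v)$, therefore in $(K,v)$ by $(K,v)\ec(L,v)$, therefore the original sentence holds in $vK$; so $vK\ec vL$. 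For $Kv$ one uses that an existential quantifier over the residue field translates to the bounded existential quantifier $\exists y\,(\cO(y,1)\wedge\cdots)$, still in the existential fragment of $\cL_{\VF}$, and argues the same way.

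I do not expect a genuine obstacle here --- this is why the statement is labelled a corollary. The only points that require (entirely routine) care are the passage from sentences to formulas with parameters in Lemma~\ref{elprvgrf}, needed for the $\prec$ and $\ec$ parts, and the verification that the translation of an existential sentence stays existential; I have highlighted both above.
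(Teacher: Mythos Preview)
Your proposal is correct and is precisely the intended argument: the paper gives no proof of this corollary, stating it as an immediate consequence of Lemma~\ref{elprvgrf}, and what you have written is exactly the routine unpacking of that deduction. Your observations that Lemma~\ref{elprvgrf} extends to formulas with parameters and that the translation preserves the existential fragment are the only points requiring any care, and both are standard.
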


\begin{cor}\cite[Corollary 4.3]{FVK_TF}		\label{interpret}
If $(K, v)$ is $\kappa$-saturated for some cardinal $\kappa$, then so are $vK$ (in the language of ordered abelian groups) and $Kv$ (in the language of fields).
\end{cor}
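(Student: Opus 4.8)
The plan is to show that $vK$ and $Kv$ are each \emph{interpretable without parameters} in $(K,v)$ in the language $\cL_{\VF}$, and then to invoke the standard fact that a parameter-free interpretation transfers $\kappa$-saturation. For the value group: $K^\times$ is $\cL_{\VF}$-definable, the relation $x\sim y:\iff vx\ge vy\wedge vy\ge vx$ (that is, $\cO(x,y)\wedge\cO(y,x)$) is a definable equivalence relation on $K^\times$, and on the quotient $K^\times/{\sim}$ the operations $[x]\oplus[y]:=[xy]$, $[x]^{-1}:=[x^{-1}]$, the class $[1]$ as zero element, and the relation $[x]\le[y]:\iff vy\ge vx$ are all induced by $\cL_{\VF}$-definable relations on $K^\times$; since $v\colon K^\times\to vK$ is a surjective homomorphism with kernel $\cO_v^\times$, the map $[x]\mapsto vx$ is an isomorphism of $(K^\times/{\sim},\oplus,\le)$ onto $(vK,+,<)$. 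For the residue field: $\cO_v=\{x:\cO(x,1)\}$ is $\cL_{\VF}$-definable, the relation $x\approx y:\iff v(x-y)>0$ is a definable equivalence relation on $\cO_v$ whose classes are the cosets of the maximal ideal $\fm_v$, and the field operations of $Kv$ are induced by $+,\cdot$ on $\cO_v$ via the surjection $\cO_v\to Kv$, $a\mapsto av$, with kernel $\fm_v$. Thus $Kv$ is interpreted in $(K,v)$, again without parameters.

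Next comes the routine transfer of saturation through an interpretation. Let $p(x)$ be a type over a set $B\subseteq vK$ with $|B|<\kappa$ in the language $\cL_\mathrm{OG}$; the argument for $Kv$ in $\cL_\mathrm{F}$ is verbatim the same. Choose for each $b\in B$ a representative $c_b\in K^\times$ with $vc_b=b$ and put $B':=\{c_b:b\in B\}$, so $|B'|<\kappa$. Using the interpretation above, translate each $\cL_\mathrm{OG}$-formula $\varphi(x,\bar b)$ occurring in $p$ into an $\cL_{\VF}$-formula $\varphi^\ast(y,\bar c_b)$ whose single free variable $y$ is meant to range over $K^\times$, and let $p^\ast(y)$ be the resulting set of $\cL_{\VF}$-formulas over $B'$. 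If $\varphi_1,\dots,\varphi_n\in p$, then, $p$ being finitely satisfiable in $vK$, there is $g\in vK$ satisfying all of them; writing $g=vd$ with $d\in K^\times$, the defining property of the interpretation gives $(K,v)\models\varphi_i^\ast(d)$ for each $i$. Hence $p^\ast$ is finitely satisfiable in $(K,v)$ over a parameter set of size $<\kappa$, so by $\kappa$-saturation of $(K,v)$ it is realized by some $d\in K^\times$; then $vd\in vK$ realizes $p$. Running the same argument with $\cO_v,\fm_v,\approx$ in place of $K^\times,\cO_v^\times,\sim$ shows $Kv$ is $\kappa$-saturated.

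There is no genuine obstacle here; this is a standard interpretability argument (and it is essentially the content of the cited \cite[Corollary 4.3]{FVK_TF}). The only points demanding mild care are (i) checking that $vK$ and $Kv$ really are quotients of $\cL_{\VF}$-definable sets by $\cL_{\VF}$-definable equivalence relations, which is immediate once one unwinds $\cO(x,y)\leftrightarrow vx\ge vy$, and (ii) bookkeeping the translation $\varphi\mapsto\varphi^\ast$ together with the choice of representatives $c_b$ so that the transferred parameter set $B'$ stays of cardinality $<\kappa$. Both are routine.
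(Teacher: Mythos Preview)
Your proof is correct and follows essentially the same approach as the paper. The paper does not give a proof at all; it merely states the corollary as an immediate consequence of Lemma~\ref{elprvgrf} (and cites \cite[Corollary~4.3]{FVK_TF}), the point being exactly the parameter-free interpretability of $vK$ and $Kv$ in $(K,v)$ that you spell out in detail.
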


But, as noticed in the Introduction, the language $\cL_{\VF}$ is not sufficient for purposes of separably tame fields because we need to deal with separable extensions only. Since, in characteristic 0 every field extension is separable, we need to deal with the positive characteristic case. So, for the rest of this section, let $p$ be a fixed prime; and by $(K, v)$ we will denote a valued field of characteristic $p$. We choose a language in which every field extension is a $p$-basis extension, i.e., linear independence over the subfield of $p^{th}$ powers is preserved. Let
$$\cL_\rQ := \cL_{\VF} \cup \{Q_m\}_{m = 1}^\infty$$
where $Q_m$ is an $m$-ary predicate symbol for each $m \ge 1$. In a valued field $(F, v)$ of characteristic zero, the $Q_m$'s are interpreted trivially, i.e., $Q_m(x_1, \ldots, x_m)$ holds for all $x_1, \ldots, x_m\in F$. But in $(K, v)$, a valued field of characteristic $p > 0$, the $Q_m$'s are interpreted as follows: for any $m$-tuple $\{x_1, \ldots, x_m\}$ from $K$,
\begin{eqnarray*}
Q_m(x_1, \ldots, x_m)\mbox{ holds } & \iff & \mbox{the elements } x_1, \ldots, x_m \mbox{ are $p$-independent}\\
& \iff & \mbox{the monomials of exponents $< p$ in the $x_i$'s are} \\
& & \mbox{linearly independent over the subfield of $p^{th}$ powers.}
\end{eqnarray*}
We add these defining axioms for the predicates $Q_m$ (for $m \ge 1$) to $T_{VF}$ and get the theory $T_{\rQ}$ of valued fields of a fixed characteristic in the language $\cL_\rQ$. Note that $T_\rQ$ is an $\forall\exists$-theory.

It is now easy to see that if $(L|K, v)$ is an extension in the language $\cL_\rQ$, then any $p$-independent subset in $K$ (in particular any $p$-basis of $K$) remains $p$-independent in $L$, and hence by Theorem~\ref{pindep}, the extension $L|K$ is separable.

It is also easy to see that if $[K : K^p] \ge p^m$ for some $m \ge1$, then there are elements $a_1, \ldots, a_m\in K$ such that $Q_m(a_1, \ldots, a_m)$ holds. Thus, for any fixed $e\in\bN$, we can elementarily define the class of all valued fields of characteristic $p$ and $p$-degree at most $e$ by adding to $T_\rQ$ the axiom
$$\forall x_1\cdots\forall x_{e+1}\, \neg Q_{e+1}(x_1, \ldots, x_{e+1}),$$
and the class of all valued fields of $p$-degree exactly $e$ by further adding the axiom
$$\exists x_1\cdots\exists x_e\, Q_e(x_1, \ldots, x_e).$$

Finally, note that since the predicates $Q_m$, for $m\ge 1$, are definable in the language $\cL_{\VF}$, the two languages $\cL_{\VF}$ and $\cL_\rQ$ give the same definable sets. In particular, for any three valued fields $(K, v)$, $(F, v)$ and $(L, v)$ of the same positive characteristic $p$,
$$(F, v)\equiv_{(K, v)}(L, v) \mbox{ in } \cL_{\VF} \iff (F, v)\equiv_{(K, v)}(L, v) \mbox{ in } \cL_\rQ.$$
In particular, if $(K, v)\subseteq (L, v)$, then
$$(K, v) \prec (L, v) \mbox{ in } \cL_{\VF} \iff (K, v) \prec (L, v) \mbox{ in } \cL_\rQ.$$

%As the $Q_m$'s are definable by $\forall\exists$-formulae,
Quite naturally the analogous result with $\prec$ replaced by $\prec_{\exists}$ does not hold in general. It does hold in the characteristic zero case. However, even in the positive characteristic case, it holds if the extension $L|K$ share a common $p$-basis as the following lemma shows; for a proof, see \cite[Lemme 3.12]{FD_thesis}.
\begin{lem} 		\label{VF2Q}
Let $(L|K, v)$ be an extension of valued fields with a common $p$-basis. Then
$$(K, v)\prec_\exists (L, v)\; \mathrm{in}\; \cL_{\VF}\iff (K, v)\prec_\exists (L, v) \;\mathrm{in}\; \cL_\rQ.$$
\end{lem}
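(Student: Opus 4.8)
The plan is to reduce the claim to the fact that, over a field equipped with a fixed finite $p$-basis, the predicates $Q_m$ and their negations are all \emph{existentially} definable in $\cL_{\VF}$ with that $p$-basis as a parameter tuple. One implication is free: since $\cL_{\VF}\subseteq\cL_\rQ$, every existential $\cL_{\VF}$-sentence with parameters in $K$ is a fortiori an existential $\cL_\rQ$-sentence, so $(K,v)\ec(L,v)$ in $\cL_\rQ$ trivially gives $(K,v)\ec(L,v)$ in $\cL_{\VF}$. For the converse, I would assume $(K,v)\ec(L,v)$ in $\cL_{\VF}$ and fix a common $p$-basis $\vec b=(b_1,\dots,b_e)$ of $K$ and $L$, treating the case $e<\infty$ (the one relevant to this paper) and indicating at the end how the general case goes; if $\operatorname{char}K=0$ the $Q_m$ are interpreted trivially and every quantifier-free $\cL_\rQ$-formula is equivalent to a quantifier-free $\cL_{\VF}$-formula, so one may assume $\operatorname{char}K=p>0$. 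The goal is: for every quantifier-free $\cL_\rQ$-formula $\varphi(\vec x,\vec y)$, produce an existential $\cL_{\VF}$-formula $\tilde\varphi(\vec x,\vec y,\vec u)$ such that, in \emph{every} field $F$ of characteristic $p$ in which $\vec b$ is a $p$-basis — in particular in both $K$ and $L$, which is exactly where the word ``common'' enters — one has $\varphi(\vec a,\vec c)\leftrightarrow\tilde\varphi(\vec a,\vec c,\vec b)$ for all $\vec a,\vec c$ from $F$. Granting this, the lemma follows: if $\vec a\in K$ and $(L,v)\models\exists\vec y\,\varphi(\vec a,\vec y)$, then $(L,v)\models\exists\vec y\,\tilde\varphi(\vec a,\vec y,\vec b)$, an existential $\cL_{\VF}$-sentence with parameters $\vec a,\vec b$ all in $K$; by hypothesis it holds in $(K,v)$, whence $(K,v)\models\exists\vec y\,\varphi(\vec a,\vec y)$.

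To construct $\tilde\varphi$, I would put $\varphi$ in disjunctive normal form and use that finite conjunctions and finite disjunctions of existential $\cL_{\VF}$-formulas are again existential, so that it suffices to express each literal. The $\cL_{\VF}$-literals are already quantifier-free. A negative literal $\neg Q_m(t_1,\dots,t_m)$ simply asserts the existence of a nontrivial $F^p$-linear relation among the $p^m$ monomials $\prod_j t_j^{i_j}$ with $0\le i_j<p$, so it is already existential in $\cL_{\VF}$, with no parameters needed.

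The only real work, and the step I expect to be the main obstacle, is a positive literal $Q_m(t_1,\dots,t_m)$: here $Q_m$ is a $\forall$-condition on its face (``no nontrivial relation''), and the $p$-basis is precisely what converts it to an $\exists$-condition. Every $z\in F$ has unique coordinates $z=\sum_{\vec i\in\{0,\dots,p-1\}^e}w_{\vec i}^{\,p}\,b^{\vec i}$ with $w_{\vec i}\in F$; expanding each monomial $t^{\vec j}$, $\vec j\in\{0,\dots,p-1\}^m$, in this way and using that Frobenius is an isomorphism $F\xrightarrow{\ \sim\ }F^p$ together with the fact that $(b^{\vec i})_{\vec i}$ is an $F^p$-basis of $F$, one checks that the $t^{\vec j}$ are linearly independent over $F^p$ exactly when the $p^m\times p^e$ matrix whose $(\vec j,\vec i)$-entry is the $\vec i$-th coordinate of $t^{\vec j}$ has full row rank $p^m$ over $F$. ``Full row rank $p^m$'' is the disjunction, over all choices of $p^m$ columns, of the non-vanishing of the corresponding $p^m\times p^m$ minor; so, introducing existential variables for the coordinates together with their defining equations $t^{\vec j}=\sum_{\vec i}w_{\vec i}^{\,p}b^{\vec i}$, one obtains an existential $\cL_{\VF}$-formula in the parameters $\vec b$ equivalent to $Q_m(t_1,\dots,t_m)$ in every field in which $\vec b$ is a $p$-basis (and simply unsatisfiable, correctly, when $m>e$). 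The one residual subtlety is that this coordinate argument genuinely needs a \emph{finite} $p$-basis: only finitely many $Q_m$ occur in $\varphi$, but the expansion above does not survive passing to an infinite $\vec b$, so for an arbitrary common $p$-basis one falls back on the finer treatment in \cite[Lemme 3.12]{FD_thesis}, the finite case being all that is used here.
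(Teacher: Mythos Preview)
The paper does not supply its own proof of this lemma; it simply refers the reader to \cite[Lemme 3.12]{FD_thesis}. So there is no in-paper argument to compare against, only your proposal to assess on its own.

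Your argument is correct in the finite $p$-degree case, which is the only case the paper ever invokes. The crucial step---rewriting the universal condition $Q_m(t_1,\dots,t_m)$ as an existential $\cL_{\VF}$-condition---is handled exactly as one expects: expand each of the $p^m$ monomials $t^{\vec j}$ in the $F^p$-basis $(b^{\vec i})_{\vec i}$, observe that $F^p$-linear independence of the $t^{\vec j}$ is equivalent to the coordinate matrix having row rank $p^m$, transfer this across Frobenius to a rank condition on a matrix over $F$, and finally express ``rank $\ge p^m$'' as the disjunction of the non-vanishing of $p^m\times p^m$ minors. Introducing the coordinates via existential quantifiers and their defining equations yields the desired existential $\cL_{\VF}$-formula with $\vec b$ as parameters. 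You are also right to flag that this formula is uniform in the sense that it works in \emph{every} field having $\vec b$ as a $p$-basis, which is precisely why the hypothesis of a \emph{common} $p$-basis is needed: the translation must be valid simultaneously in $K$ and in $L$.

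Your handling of $\neg Q_m$ (already existential, no parameters needed), of the trivial characteristic-zero case, and of the easy implication are all fine. Your closing remark that the infinite $p$-degree case requires Delon's finer treatment is honest and appropriate; since the paper restricts throughout to finite $p$-degree, your self-contained argument covers everything that is actually used.
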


\section{Embedding Lemmas}
\subsection{Necessary conditions for the AKE$^\exists$ Principle}
In this section we discuss tools for the proof of AKE$^\exists$ Principle.

We will need a model theoretic tool which we will apply to valued fields as well as value groups and residue fields. We consider a countable language $\cL$ and $\cL$-structures ${\eu B}$ and ${\eu A}^*$ with a common substructure ${\eu A}$. We will say that $\sigma$ is an \bfind{embedding of ${\eu B}$ in ${\eu A}^*$ over ${\eu A}$} if it is an embedding of ${\eu B}$ in ${\eu A}^*$ that leaves the universe $A$ of ${\eu A}$ elementwise fixed.

\begin{prop} 		\label{ec}
Let ${\eu A}\subseteq {\eu B}$ and ${\eu A}\subseteq {\eu A}^*$ be extensions of $\cL$-structures. If\/ ${\eu B}$ embeds over
${\eu A}$ in ${\eu A}^*$ and if\/ ${\eu A}\ec {\eu A}^*$, then ${\eu A} \ec {\eu B}$. Conversely, if ${\eu A}\ec {\eu B}$ and if ${\eu A}^*$ is $|B|^+$-saturated, then ${\eu B}$ embeds over ${\eu A}$ in ${\eu A}^*$.
\end{prop}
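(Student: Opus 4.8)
The plan is to run two standard model-theoretic arguments, the whole thing hinging on one routine fact: an embedding $\sigma\colon\eu M\to\eu N$ of $\cL$-structures preserves quantifier-free formulas in both directions, and therefore transports existential formulas upwards, i.e.\ $\eu M\models\theta(\bar a)$ implies $\eu N\models\theta(\sigma\bar a)$ for every existential $\cL$-formula $\theta$. The first implication then follows immediately: if $\sigma\colon\eu B\to\eu A^*$ is an embedding fixing $A$ pointwise, if $\eu A\ec\eu A^*$, and if $\varphi$ is an existential $\cL$-sentence with parameters from $A$ with $\eu B\models\varphi$, then $\eu A^*\models\varphi$ by the fact above (the parameters being fixed by $\sigma$), and hence $\eu A\models\varphi$ since $\eu A\ec\eu A^*$. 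As $\varphi$ was arbitrary, $\eu A\ec\eu B$.

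For the converse I would construct an embedding $\eu B\to\eu A^*$ over $\eu A$ by transfinite recursion along an enumeration $B=\{b_i : i<|B|\}$ (we may assume $B$ infinite, the finite case being only simpler). At stage $i$ I want a map $\sigma_i$ with domain $A\cup\{b_j : j<i\}$, fixing $A$ pointwise, increasing in $i$, and satisfying the key property $(\ast_i)$: for every existential $\cL$-formula $\theta$ and every tuple $\bar d$ from $\operatorname{dom}\sigma_i$ one has $\eu B\models\theta(\bar d)\Rightarrow\eu A^*\models\theta(\sigma_i\bar d)$. The starting map $\sigma_0$ is the identity on $A$: if $\theta$ is existential over $A$ and $\eu B\models\theta$, then $\eu A\models\theta$ because $\eu A\ec\eu B$, hence $\eu A^*\models\theta$ because $\eu A\subseteq\eu A^*$, which is $(\ast_0)$; limit stages are handled by unions.

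At a successor stage $i\mapsto i+1$ I would consider the set $q(x)$ of all existential formulas $\theta(x,\sigma_i\bar d,\bar a)$ --- with $\bar d$ from $\{b_j : j<i\}$ and $\bar a$ from $A$ --- for which $\eu B\models\theta(b_i,\bar d,\bar a)$; this is a partial type over a parameter set of size $\le|A|+|i|<|B|^+$. It is finitely satisfiable in $\eu A^*$, because a finite subconjunction is again of the form $\theta(x,\sigma_i\bar d,\bar a)$ with $\eu B\models\theta(b_i,\bar d,\bar a)$, whence $\eu B\models\exists x\,\theta(x,\bar d,\bar a)$, and since $\exists x\,\theta$ is existential in $\bar d,\bar a$, property $(\ast_i)$ gives $\eu A^*\models\exists x\,\theta(x,\sigma_i\bar d,\bar a)$. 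By $|B|^+$-saturation $q(x)$ is realized by some $c_i\in A^*$, and $\sigma_{i+1}:=\sigma_i\cup\{b_i\mapsto c_i\}$ then satisfies $(\ast_{i+1})$ by the choice of $c_i$. Finally $\sigma:=\bigcup_{i<|B|}\sigma_i$ is a map $B\to A^*$ fixing $A$, and applying $(\ast_{|B|})$ to a quantifier-free $\psi$ and to $\neg\psi$ shows $\sigma$ preserves quantifier-free formulas in both directions, so $\sigma$ is an embedding of $\eu B$ into $\eu A^*$ over $\eu A$.

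The step I expect to be the main obstacle --- and the reason $(\ast_i)$ must speak about existential rather than merely atomic formulas --- is the verification that $q(x)$ is finitely satisfiable in $\eu A^*$: that computation unavoidably passes through the existential sentence $\exists x\,\theta(x,\bar d,\bar a)$, so I need $\sigma_i$ to transport existential statements from $\eu B$ into $\eu A^*$, not just atomic ones. With this strengthened induction hypothesis in place, the base case works precisely because $\eu A\ec\eu B$ together with $\eu A\subseteq\eu A^*$ pushes existential sentences over $A$ from $\eu B$ up into $\eu A^*$; everything else is bookkeeping with cardinalities plus the elementary preservation properties of embeddings.
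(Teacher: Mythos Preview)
Your argument is correct and is the standard one: the first direction is immediate from upward preservation of existential formulas under embeddings, and for the converse you run the usual type-realization construction, with the right strengthened inductive hypothesis $(\ast_i)$ (preservation of existential, not just atomic, formulas) so that finite satisfiability of $q(x)$ in $\eu A^*$ goes through via $\exists x\,\theta$. The paper does not give its own proof of this proposition but simply cites \cite[Proposition~5.1]{FVK_TF}, so there is nothing further to compare against; your write-up is what one would expect to find behind that citation.
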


For a proof of this, see \cite[Proposition 5.1]{FVK_TF}. If we have an extension ${\eu A}\subseteq {\eu B}$ of $\cL$-structures and want to show that ${\eu A}\ec {\eu B}$, then by the above proposition it suffices to show that ${\eu B}$ embeds over ${\eu A}$ in some elementary extension ${\eu A}^*$ of ${\eu A}$. This is the motivation for {\bf embedding lemmas},\index{embedding lemma} which will play an important role later in our paper. When we look for such embeddings, we can use a very helpful principle which follows immediately from the previous proposition because ${\eu A} \ec {\eu B}$ if and only if ${\eu A}\ec {\eu B}_0$ for every substructure ${\eu B}_0$ of ${\eu B}$ which is finitely generated over ${\eu A}$ (as every existential sentence only talks about finitely many elements).

\begin{lem}		\label{embfingen}
Let ${\eu A}\subseteq {\eu B}$ and ${\eu A}\subseteq {\eu A}^*$ be extensions of $\cL$-structures. Assume that ${\eu A}^*$ is
$|B|^+$-saturated. If every substructure of ${\eu B}$ which is finitely generated over ${\eu A}$ embeds over ${\eu A}$ in ${\eu A}^*$, then also ${\eu B}$ embeds over ${\eu A}$ in ${\eu A}^*$.
\end{lem}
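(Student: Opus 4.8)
The plan is to derive Lemma~\ref{embfingen} from Proposition~\ref{ec} together with a compactness/saturation argument, the only genuine input being a standard fact about direct limits of embeddings. By Proposition~\ref{ec} it suffices to show that ${\eu B}$ embeds over ${\eu A}$ in ${\eu A}^*$ (then ${\eu A}\ec{\eu B}$ would follow if we wanted it, but here the conclusion is literally the embedding statement, so that is exactly what we must produce). The hypothesis gives us, for each substructure ${\eu B}_0\subseteq{\eu B}$ that is finitely generated over ${\eu A}$, an embedding $\sigma_{{\eu B}_0}\colon{\eu B}_0\hookrightarrow{\eu A}^*$ fixing $A$ pointwise; the task is to glue these into a single embedding of all of ${\eu B}$.

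First I would reduce to the case $|B|=|A|+\aleph_0$ is infinite; if ${\eu B}$ is already finitely generated over ${\eu A}$ there is nothing to do. Then I would attempt the naive union-of-chain argument: well-order $B=\{b_\xi:\xi<\lambda\}$ and try to build an increasing chain of embeddings $\iota_\xi\colon{\eu A}(b_\eta:\eta<\xi)\hookrightarrow{\eu A}^*$ over ${\eu A}$, taking unions at limits. The successor step asks: given an embedding of a substructure ${\eu C}$ (of size $<|B|^+$, in fact of size $\le|A|+\aleph_0$), extend it to ${\eu C}(b_\xi)$. This is precisely where $|B|^+$-saturation of ${\eu A}^*$ is used: the extension ${\eu C}(b_\xi)|{\eu C}$ is finitely generated, so by the usual argument one writes down the quantifier-free type of $b_\xi$ over the image $\iota_\xi({\eu C})$ — a set of fewer than $|B|^+$ formulas, each finitely satisfiable in ${\eu A}^*$ because finitely many of them only involve a substructure of ${\eu C}(b_\xi)$ finitely generated over ${\eu A}$, hence embeddable over ${\eu A}$ in ${\eu A}^*$, and the images can be moved by a $|B|^+$-saturation/homogeneity argument to agree with $\iota_\xi$ on the finitely many relevant elements of $C$ — and then realizes the whole type in ${\eu A}^*$.

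The step I expect to be the main obstacle is the coherence in that successor step: the embeddings $\sigma_{{\eu B}_0}$ supplied by the hypothesis need not be compatible with each other or with the partial embedding $\iota_\xi$ already constructed, so one cannot simply paste them. The clean way around this is to phrase everything as a type-realization: the relevant set of formulas to realize over $\iota_\xi({\eu C})$ is
\[
p(x)=\bigl\{\varphi(x,\iota_\xi(\vec c)) : \vec c\in C,\ {\eu B}\models\varphi(b_\xi,\vec c),\ \varphi\ \text{quantifier-free}\bigr\},
\]
and finite satisfiability of $p$ in ${\eu A}^*$ over the fixed finite parameter set follows by applying the hypothesis to the finitely generated substructure of ${\eu B}$ generated over ${\eu A}$ by $b_\xi$ and the finitely many $c$'s occurring, then composing with an automorphism of ${\eu A}^*$ that fixes ${\eu A}$ and sends that embedding's values on the $c$'s to $\iota_\xi(\vec c)$ (available by $|B|^+$-saturation, since $|C|<|B|^+$). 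Since $|C|\le|A|+\aleph_0<|B|^+$, the type $p$ has size $<|B|^+$ and is realized in ${\eu A}^*$; any realization extends $\iota_\xi$ to the desired $\iota_{\xi+1}$. Taking the union $\iota=\bigcup_{\xi<\lambda}\iota_\xi$ yields an embedding of ${\eu B}$ over ${\eu A}$ into ${\eu A}^*$, which is the assertion. I would remark that this is essentially the standard proof that an $\aleph_0$-saturated—more precisely $|B|^+$-saturated—structure is ``existentially universal'' for structures of size $\le|B|$ over a common substructure, specialized to the quantifier-free/existential setting, and note the parallel with the proof of Proposition~\ref{ec} in \cite[Proposition 5.1]{FVK_TF}.
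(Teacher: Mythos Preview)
Your transfinite-induction argument has a genuine gap at the successor step. You claim that $|B|^+$-saturation of ${\eu A}^*$ supplies an automorphism fixing $A$ pointwise and carrying $\sigma(\vec c)$ to $\iota_\xi(\vec c)$. But saturation only yields homogeneity for partial \emph{elementary} maps: to move $\sigma(\vec c)$ to $\iota_\xi(\vec c)$ over $A$ you would need these two tuples to have the same \emph{complete} type over $A$ in ${\eu A}^*$, whereas all you know is that they have the same \emph{quantifier-free} type over $A$ (each being the image of $\vec c$ under an embedding over ${\eu A}$). In a structure whose theory is not model complete these types can differ, and then no such automorphism --- indeed no partial elementary map --- exists. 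So finite satisfiability of $p(x)$ at the \emph{specific} parameters $\iota_\xi(\vec c)$ does not follow from the existence of the unrelated embedding $\sigma$; the incompatibility you flagged as ``the main obstacle'' is not removed by this device.

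The paper avoids the coherence problem entirely by passing through existential closedness rather than attempting to glue the $\sigma_{{\eu B}_0}$ directly. The key observation is that ${\eu A}\ec{\eu B}$ holds if and only if ${\eu A}\ec{\eu B}_0$ for every substructure ${\eu B}_0$ of ${\eu B}$ finitely generated over ${\eu A}$, simply because every existential sentence mentions only finitely many elements. Now the forward direction of Proposition~\ref{ec} turns each hypothesised embedding ${\eu B}_0\hookrightarrow{\eu A}^*$ over ${\eu A}$ into ${\eu A}\ec{\eu B}_0$ (this step uses ${\eu A}\ec{\eu A}^*$, which is tacit here and holds in every application since ${\eu A}^*$ is always chosen as an elementary extension of ${\eu A}$); hence ${\eu A}\ec{\eu B}$, and the converse direction of Proposition~\ref{ec} together with $|B|^+$-saturation then yields the embedding of all of ${\eu B}$. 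The intermediate statement ``${\eu A}\ec{\eu B}$'' is precisely the finitary condition that the converse half of Proposition~\ref{ec} is designed to exploit, and it obviates any need to make the individual embeddings of the ${\eu B}_0$ cohere.
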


We will also need the following well known facts (which were proved, e.g., in L.~van den Dries' thesis \cite{LD_thesis}).

\begin{lem}		\label{sica}
a)\ \ Take an extension $G|H$ of torsion free abelian groups. Consider it as an extension of $\cL_{\rm G}$-structures, where $\cL_{\rm G} = \{+, -, 0\}$ is the language of abelian groups. If $H$ is existentially closed in $G$ in the language $\cL_{\rm G}$, then $G/H$ is torsion free.

b)\ \ Take a field extension $L|K$. If $K$ is existentially closed in $L$ in the language $\cL_{\rm F}$ of fields (or in the language $\cL_{\rm R}$ of rings), then $L|K$ is regular, i.e., $K$ is relatively algebraically closed in $L$ and $L|K$ is separable.
\end{lem}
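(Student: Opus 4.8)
The plan is to treat the two parts separately, since each reduces to checking that a specific algebraic obstruction can be expressed by an existential sentence with parameters in the smaller structure.

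For part a), suppose $G/H$ has torsion, so there is $g \in G \setminus H$ and an integer $n \geq 2$ with $ng \in H$; call this element $h := ng \in H$. Then the existential $\cL_{\mathrm G}$-sentence $\exists x\,(\underbrace{x + \dots + x}_{n} = h)$ holds in $G$, witnessed by $g$. Since $H \ec G$, it must hold in $H$, so there is $h' \in H$ with $nh' = h = ng$. But then $n(g - h') = 0$ in $G$, and since $G$ is torsion free this forces $g = h' \in H$, contradicting $g \notin H$. Hence $G/H$ is torsion free. (One subtlety to record: the multiplication-by-$n$ map is a genuine term in $\cL_{\mathrm G}$, so the sentence really is quantifier-existential with a parameter from $H$.)

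For part b), I would split ``regular'' into its two clauses. First, $K$ is relatively algebraically closed in $L$: if $a \in L$ is algebraic over $K$ with $a \notin K$, let $f \in K[X]$ be its minimal polynomial, of degree $\geq 2$. The sentence $\exists x\,(f(x) = 0)$ — with coefficients of $f$ as parameters from $K$ — holds in $L$, hence in $K$ by $K \ec L$, producing a root $b \in K$ of $f$; then $X - b$ divides $f$ in $K[X]$, contradicting irreducibility of $f$ unless $\deg f = 1$, i.e. $a \in K$. Second, $L|K$ is separable: by Theorem~\ref{pindep} it suffices (in characteristic $p>0$; in characteristic $0$ there is nothing to prove) to show that some $p$-basis of $K$ remains $p$-independent in $L$, equivalently that no monomial relation of exponents $<p$ among chosen $p$-independent elements of $K$ acquires nontrivial $K^p$-... — more cleanly: if $L|K$ were not separable, there would be finitely many elements $c_1,\dots,c_m \in K$, $p$-independent in $K$, and a nontrivial linear dependence of their exponent-$<p$ monomials over $L^p$; clearing denominators this yields finitely many elements $a_1,\dots,a_k \in L$ and a polynomial identity over $K$ expressing that the $c_i$'s fail to be $p$-independent, i.e. an existential $\cL_{\mathrm F}$-sentence with parameters $c_1,\dots,c_m$ from $K$ asserting the existence of such $a_j$'s, which holds in $L$; by $K \ec L$ it holds in $K$, contradicting the $p$-independence of the $c_i$ in $K$. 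Alternatively, and perhaps more transparently, one uses the criterion that $L|K$ is separable iff $L$ and $K^{1/p}$ are linearly disjoint over $K$: a failure of linear disjointness is witnessed by finitely many $a_1,\dots,a_k\in L$ satisfying a nontrivial $K$-linear relation together with the $p$th powers $a_1^p,\dots$ lying in prescribed positions — again an existential condition over $K$ whose truth in $L$ would descend to $K$.

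The main obstacle is purely bookkeeping in part b): expressing ``not separable'' (or ``$L, K^{1/p}$ not linearly disjoint over $K$'') as a genuine \emph{existential} first-order formula over $K$. The point is that non-linear-disjointness is witnessed by a \emph{finite} set of elements of $L$ and a finite set of polynomial equations/inequations with coefficients in $K$ (the coefficients of the alleged $K$-linear relation can be taken among these witnessed elements, and ``being a $p$th power of a parameter'' is just the equation $y^p = c$), so existential closedness applies. Once the witnessing data is correctly isolated, both parts are immediate applications of the definition of $\ec$. I would present part a) in full and part b) with the minimal-polynomial argument spelled out, citing \cite{LD_thesis} for the separability bookkeeping rather than reproducing it.
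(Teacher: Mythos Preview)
The paper does not give its own proof of this lemma; it simply records the facts as ``well known'' and cites van den Dries' thesis \cite{LD_thesis}. Your proposal therefore goes further than the paper does, and your arguments are correct: part~a) is exactly the standard divisibility trick, and for part~b) both clauses are handled properly --- the minimal-polynomial argument for relative algebraic closure is fine, and the separability sketch (choose finitely many $c_1,\dots,c_m\in K$ that are $p$-independent in $K$ but not in $L$, then the sentence $\exists (y_I)\bigl(\bigvee_I y_I\neq 0 \wedge \sum_I y_I^p c^I = 0\bigr)$ descends to $K$ and yields a contradiction) works once tidied up. Your fallback of citing \cite{LD_thesis} for that bookkeeping is precisely what the paper itself does for the whole lemma, so either way you are in line with the paper's treatment.
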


\subsection{Separable extensions without transcendence defect.}			\label{sectmwtd}
Our main goal in this section is to prove Theorem~\ref{sewtd} by constructing an appropriate embedding that respects the corresponding embeddings at the level of value groups and residue fields. We now make this precise. Take a separably tame field $(K,v)$ and a separable extension $(L|K,v)$ without transcendence defect. We choose $(K^*,v^*)$ to be an $|L|^+$-saturated elementary extension of $(K,v)$. Since ``henselian'' is an elementary property, $(K^*,v^*)$ is henselian like $(K,v)$. Further, it follows from Corollary~\ref{interpret} that $K^*v^*$ is an $|L|^+$- (and consequently, $|Lv|^+$-) saturated elementary extension of $Kv$ and that $v^*K^*$ is an $|L|^+$- (and consequently, $|vL|^+$-) saturated elementary extension of $vK$. Assume that the side conditions $vK\ec vL$ and $Kv\ec Lv$ hold. We wish to prove that $(K,v)\ec (L,v)$ in the language $\cL_{\VF}$.

\begin{rmk}
As noted in the Introduction, this claim is trivially true if $(K, v)$ (and hence $(L, v)$) is trivially valued. So, we can assume without loss of generality that $(K, v)$ is nontrivially valued. However, later on while proving the Separable Relative Embedding Property in Section~\ref{relEmb}, we will need to extend embeddings over trivially valued separably tame fields as well. So, we handle the general case here. For the rest of this proof, we allow $v$ to be trivial on $K$, but assume that $(K^*, v^*)$ is a nontrivially valued $|L|^+$-saturated separably tame field extending $(K, v)$.
\end{rmk}

By our assumptions, $vK\ec vL$ and $Kv\ec Lv$. Thus, by Lemma~\ref{sica}, $vL/vK$ is torsion free and $Lv|Kv$ is separable. Also, by Proposition~\ref{ec}, there exist embeddings
$$\rho:\>vL \longrightarrow v^*K^*$$
over $vK$ and
$$\sigma:\;Lv \longrightarrow K^*v^*$$
over $Kv$. Here, the embeddings of value groups and residue fields are understood to be (injective) homomorphisms of ordered groups and of fields respectively.

Our goal is to prove that $(K,v)\ec (L,v)$ in the language $\cL_{\VF}$. By Proposition~\ref{ec}, this can be achieved by showing the existence of an embedding
$$\iota:\> (L,v) \longrightarrow (K^*,v^*)$$
over $K$, i.e., an embedding of $L$ in $K^*$ over $K$ preserving the valuation, that is,
$$\forall x\in L:\; x\in\cO_L\Longleftrightarrow\iota x \in\cO_{K^*}\;.$$
%We will in fact construct an embedding $\iota$ that respects $\rho$ and $\sigma$, i.e., $v^*(\iota a) = \rho (va)$ and $(\iota a)v^*= \sigma(av)$ for all $a \in L$. This will again be useful in Section~\ref{relEmb}.

According to Lemma~\ref{embfingen}, such an embedding exists already if it exists for every finitely generated subextension $(F|K,v)$ of $(L|K,v)$. In this way, we reduce our embedding problem to an embedding problem for valued algebraic function fields $(F|K,v)$. Since in the present case, $(L|K, v)$ is assumed to be a separable extension without transcendence defect, the same holds for every finitely generated subextension $(F|K, v)$. Also, $vF/vK$ is torsion free, $Fv|Kv$ is separable, and there are corresponding embeddings $\rho$ and $\sigma$, which are just the restrictions of the original $\rho$ and $\sigma$ to $vF$ and $Fv$ respectively.

We will construct the embedding $\iota$ from the corresponding embedding lemma \cite[Lemma 5.6]{FVK_TF}. To that end, we consider the extension $(FK^{1/p^\infty}|K^{1/p^\infty},$ $v)$. Clearly it is finitely generated because $F|K$ is. Since $K^{1/p^\infty}|K$ is an algebraic extension, it follows that $\trdeg F|K = \trdeg FK^{1/p^\infty}|K^{1/p^\infty}$. Since $(K, v)$ is separably tame, Lemma~\ref{phtd} yields that $(K^{1/p^\infty}, v)$ is tame, and hence defectless. The residue field extension $FK^{1/p^\infty}v|K^{1/p^\infty}v$ is separable because $K^{1/p^\infty}v = (Kv)^{1/p^\infty}$ by Lemma~\ref{sep_perf_hull}, and hence $K^{1/p^\infty}v$ is perfect. The value group extension $vFK^{1/p^\infty}|vK^{1/p^\infty}$ is torsion free because $vK^{1/p^\infty} = 1/p^\infty\,vK$ by Lemma~\ref{sep_perf_hull}, and hence $vK^{1/p^\infty}$ is $p$-divisible; moreover, $vF/vK$ is torsion free and
$$vFK^{1/p^\infty}\subseteq vF^{1/p^\infty} = \dfrac{1}{p^\infty}vF.$$
Since $(K^*, v^*)$ is also separably tame, its perfect hull $({K^*}^{1/p^\infty}, v^*)$ is tame, and hence henselian. Moreover, the embeddings $\rho: vF\longrightarrow v^*K^*$ over $vK$ and $\sigma: Fv\longrightarrow K^*v^*$ over $Kv$ can be uniquely extended to embeddings (also denoted by) $\rho: vF^{1/p^\infty}\longrightarrow v^*{K^*}^{1/p^\infty}$ over $vK^{1/p^\infty}$ and $\sigma: F^{1/p^\infty}v\longrightarrow {K^*}^{1/p^\infty}v^*$ over $K^{1/p^\infty}v$. In particular, there are embeddings $\rho: vFK^{1/p^\infty}\longrightarrow v^*{K^*}^{1/p^\infty}$ over $vK^{1/p^\infty}$ and $\sigma: FK^{1/p^\infty}v\longrightarrow {K^*}^{1/p^\infty}v^*$ over $K^{1/p^\infty}v$.

Finally, note that if $(K, v)$ is nontrivially valued, Lemma~\ref{septame} yields that $vK$ is $p$-divisible and hence $vK^{1/p^\infty} = vK$. On the other hand, if $(K, v)$ is trivially valued, then so is $(K^{1/p^\infty}, v)$, and again we have $vK^{1/p^\infty} = vK$. In either case, since $(Kv)^{1/p^\infty}|Kv$ is algebraic, we have
$$\trdeg Fv|Kv = \trdeg Fv(Kv)^{1/p^\infty}|(Kv)^{1/p^\infty} \le \trdeg FK^{1/p^\infty}v|K^{1/p^\infty}v.$$
As a result, it follows by (\ref{wtdgeq}) and the fact that $(F|K, v)$ is without transcendence defect,
\begin{eqnarray*}
\trdeg FK^{1/p^\infty}|K^{1/p^\infty} & \ge & \trdeg FK^{1/p^\infty}v|K^{1/p^\infty}v \;+\; \mathrm{r.r.}\; v FK^{1/p^\infty}/vK^{1/p^\infty}\\
& \ge & \trdeg Fv|Kv \;+\; \mathrm{r.r.}\; vF/vK\\
& = & \trdeg F|K \\
& = & \trdeg FK^{1/p^\infty}|K^{1/p^\infty}
\end{eqnarray*}
and hence there is equality throughout. In particular, the extension $(FK^{1/p^\infty}|K^{1/p^\infty}, v)$ is also without transcendence defect.

Since $(K^{1/p^\infty}, v)$ is defectless, it follows from Theorem~\ref{hrwtd} that $(FK^{1/p^\infty}|K^{1/p^\infty}, v)$ is strongly inertially generated. Hence, from Lemma~\ref{ael} we obtain an embedding
\[
j: (FK^{1/p^\infty}, v)\longrightarrow ({K^*}^{1/p^\infty}, v^*)
\]
over $K^{1/p^\infty}$ that respects $\rho$ and $\sigma$, i.e., $v^*j(a) = \rho (va)$ and $j(a)v^*= \sigma(av)$ for all $a \in FK^{1/p^\infty}$. Since $(K^*, v^*)$ is dense in $({K^*}^{1/p^\infty}, v^*)$ by Lemma~\ref{phtd}, it follows that ${K^*}^{1/p^\infty}\subseteq {K^*}^c$, where $({K^*}^c, v^*)$ is the completion of $(K^*, v^*)$. By taking the restriction of $j$ to $(F, v)$, we thus get an embedding
$$j: (F, v) \longrightarrow ({K^*}^c, v^*)$$
over $K$ that respects $\rho$ and $\sigma$.
%By the universal property of henselizations, and the fact that $({K^*}^c, v^*)$ is henselian, we can extend this embedding to an embedding
%$$j: (F^h, v) \longrightarrow ({K^*}^c, v^*)$$
%over $K$. Since the extension $(F^h|F, v)$ is immediate, this extension automatically respects $\rho$ and $\sigma$.
We will use this embedding $j$ to construct our required embedding $\iota$ of $(F, v)$ in $(K^*, v^*)$ over $K$ that also respects $\rho$ and $\sigma$. The case of such valued function fields is covered by the following embedding lemma.

\begin{lem}		\label{EI}
{\bf (Embedding Lemma I)}\newline
Let $(K, v)$ be a valued field (the valuation is allowed to be trivial), $(F|K, v)$ a separable valued function field without transcendence defect, and $(K^*, v^*)$ a henselian extension of $(K, v)$. Also, assume that $vF/vK$ is torsion free, $Fv|Kv$ is separable, and there are embeddings $\rho:\> vF \longrightarrow v^*K^*$ over $vK$ and $\sigma:\>Fv \longrightarrow K^*v^*$ over $Kv$. If there is an embedding $j: (F, v)\longrightarrow ({K^*}^c, v^*)$ over $K$ that respects $\rho$ and $\sigma$, then there exists an embedding $\iota:\>(F, v) \longrightarrow (K^*, v^*)$ over $K$ that also respects $\rho$ and $\sigma$.
\end{lem}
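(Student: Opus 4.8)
The plan is to use the embedding $j:(F,v)\longrightarrow ({K^*}^c,v^*)$ as a "blueprint" and then replace the image of a well-chosen generating set of $F$ over $K$ by genuinely-in-$K^*$ approximations, exploiting density of $K^*$ in ${K^*}^c$ and henselianity of $(K^*,v^*)$. First I would apply Theorem~\ref{sepdefextn} to the separable valued function field $(F|K,v)$ without transcendence defect, with $vF/vK$ torsion free and $Fv|Kv$ separable: this produces a separating transcendence basis $\cT=\{x_1,\dots,x_r,y_1,\dots,y_s\}$ of $F|K$ with $vK(\cT)=vF=vK\oplus\bigoplus_i\bZ vx_i$ and $K(\cT)v=Kv(y_1v,\dots,y_sv)$ with $Fv|K(\cT)v$ separable, so that $F|K(\cT)$ is finite separable. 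Write $F=K(\cT)(a)$ for a single primitive element $a$ with separable minimal polynomial $h(X)\in K(\cT)[X]$; moreover, since $F|K(\cT)v$-data and value-group data are already saturated by $j$, I will arrange (after adjusting $a$ by a unit if necessary) that $va=0$ and that $a v$ generates $Fv$ over $K(\cT)v$, so that $h$ is in fact an inertia-type polynomial: $\bar h(X)\in K(\cT)v[X]$ is separable of degree $[F:K(\cT)]$ and $av$ is a simple root of it.

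Next I would transport the first half of $\cT$: set $\xi_i:=\rho(vx_i)\in v^*K^*$ and $\eta_j:=\sigma(y_jv)\in K^*v^*$. Since $(K^*,v^*)$ extends $(K,v)$ and $\rho,\sigma$ are embeddings over $vK$ and $Kv$, I can pick $x_i^*\in K^*$ with $v^*x_i^*=\xi_i$ and $y_j^*\in K^*$ with $v^*y_j^*=0$, $y_j^*v^*=\eta_j$; because the $\xi_i$ are rationally independent over $vK$ and the $\eta_j$ are algebraically independent over $Kv$, the set $\cT^*:=\{x_i^*,y_j^*\}$ is algebraically independent over $K$ and the map $x_i\mapsto x_i^*$, $y_j\mapsto y_j^*$ extends to a valuation-preserving $K$-embedding $\iota_0: K(\cT)\hookrightarrow K^*$ with $v^*(\iota_0 c)=\rho(vc)$ and $(\iota_0 c)v^*=\sigma(cv)$ for all $c\in K(\cT)$ — this is exactly the "standard valuation transcendence basis" matching, and it agrees with $j$ on $K(\cT)$ after composing with the inclusion $K^*\subseteq{K^*}^c$ (or, if it does not literally agree, I compose $j$ with an automorphism of ${K^*}^c$ fixing $K$ and sending $j(\cT)$ to $\iota_0(\cT)$; this is the point where I must check that $j|_{K(\cT)}$ and $\iota_0$ are conjugate, which follows because both realize the same quantifier-free type over $K$ determined by $\rho,\sigma$ on the purely transcendental part). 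Now apply $\iota_0$ to the coefficients of $h$ to get $h^*(X)\in \iota_0(K(\cT))[X]\subseteq K^*[X]$, a separable polynomial whose reduction $\bar h^*$ over $\iota_0(K(\cT))v^*=K(\cT)v$-image is still separable of the right degree, and $j(a)\in{K^*}^c$ is a root of $h^*$ with $v^*j(a)=0$ and $j(a)v^*=\sigma(av)$ a simple root of $\bar h^*$.

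The final step, and the one I expect to be the main obstacle, is to produce a root of $h^*$ inside $K^*$ itself rather than merely in ${K^*}^c$, while still respecting $\rho$ and $\sigma$. Since $(K^*,v^*)$ is henselian and $\bar h^*$ has $j(a)v^*$ as a \emph{simple} root in $K^*v^*$ (because $K(\cT)v(av)=Fv$ embeds via $\sigma$ and $\bar h^*$ is separable), Hensel's Lemma directly yields $a^*\in K^*$ with $h^*(a^*)=0$, $v^*a^*=0$, and $a^*v^*=j(a)v^*=\sigma(av)$; uniqueness of the Hensel lift with that residue shows $a^*$ is the unique root of $h^*$ in the valuation ring with residue $\sigma(av)$, hence the $K$-embedding $\iota:F=K(\cT)(a)\longrightarrow K^*$ sending $\cT\mapsto\cT^*$ and $a\mapsto a^*$ is well-defined and injective. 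It remains to verify that $\iota$ is valuation-preserving and respects $\rho,\sigma$ on all of $F$, not just on the generators: for this I would argue that $\iota$ and $j$ induce the same maps $vF\to v^*{K^*}^c$ and $Fv\to({K^*}^c)v^*$ — indeed both are determined on $K(\cT)$ by $\iota_0$, and on $a$ both send $va=0$ to $0$ and $av$ to $\sigma(av)$; since $vF=vK(\cT)$ and $Fv=K(\cT)v(av)$ are generated over the $K(\cT)$-part by the data of $a$, and $j$ is already known to respect $\rho,\sigma$, the same holds for $\iota$ by compatibility of the two extensions $F\hookrightarrow{K^*}^c$. The subtle point throughout is arranging $va=0$ with $av$ generating $Fv|K(\cT)v$, i.e. reducing the finite separable extension $F|K(\cT)$ to a genuinely inertial (unramified) one so that Hensel applies cleanly; this uses that $vF=vK(\cT)$ already (so no ramification is left) together with separability of $Fv|K(\cT)v$, which is precisely the extra conclusion of Theorem~\ref{sepdefextn}, and it is why that theorem was needed in its strengthened form.
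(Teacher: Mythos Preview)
Your approach has a genuine gap at the claim that $\bar h$ is separable, equivalently that $av$ is a \emph{simple} root of $\bar h$. This would follow if the finite separable extension $F^h|K(\cT)^h$ were defectless, but nothing in the hypotheses of Lemma~\ref{EI} forces that: $(K,v)$ is an arbitrary valued field, and Theorem~\ref{sepdefextn} yields only $vF=vK(\cT)$ and $Fv|K(\cT)v$ separable, while $[F^h:K(\cT)^h]$ may still strictly exceed $[Fv:K(\cT)v]$. When defect is present, $av$ is \emph{never} a simple root of $\bar h$, no matter which primitive element $a$ with $va=0$ you choose: if $g$ denotes the (separable) minimal polynomial of $av$ over $K(\cT)v$ and one had $\bar h=g\cdot r$ with $\gcd(g,r)=1$, Hensel's factorization lemma over $K(\cT)^h$ would force the $K(\cT)^h$-irreducible factor of $h$ vanishing at $a$ to have degree $\deg g=[Fv:K(\cT)v]$, contradicting the defect; hence $g^2\mid\bar h$. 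In effect your plan reproves Lemma~\ref{ael} (which assumes $F$ is strongly inertially generated) rather than Lemma~\ref{EI}, and it makes no essential use of the given embedding $j$ into the completion --- indeed, if your argument worked, the hypothesis on $j$ would be superfluous.

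The paper's route differs precisely here. It first replaces $F$ by $F_1:=j(F)\subseteq{K^*}^c$ and then uses the \emph{density} of $K^*$ in ${K^*}^c$ to approximate the separating transcendence basis $\cT$ of $F_1|K$ by elements $x_i',y_k'\in K^*$; for the primitive element $b$ it invokes not the residue-field form of Hensel but the Implicit Function Theorem for henselian fields, which needs only $\partial h/\partial Z(x_1,\dots,y_s,b)\neq 0$ (i.e.\ separability of $F_1|K(\cT)$, no defectlessness) to produce a root $b'\in K^*$ close to $b$. Because the resulting map is obtained by perturbation rather than by lifting a prescribed residue, two further checks are needed: witnesses $a_\ell$ distinguishing the finitely many extensions of $v^*$ from $K(\cT)$ to $F_1$ are tracked to verify that the \emph{correct} valuation is induced, and a separate residue generator $a_1$ is tracked to verify that residues are preserved. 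This is exactly where the hypothesis $j:(F,v)\hookrightarrow({K^*}^c,v^*)$ does real work; without it one is thrown back on assuming $(K,v)$ defectless, which is the hypothesis of Theorem~\ref{hrwtd} that Lemma~\ref{EI} is designed to avoid.
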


To prove this embedding lemma, it suffices to prove the following.
\begin{lem}		\label{EILem}
Let $(K, v)$ be a valued field (the valuation is allowed to be trivial) and $(K^*, v^*)$ be a henselian extension of $(K, v)$. Let $(F_1|K, v^*)$ be a separable valued function field without transcendence defect contained in ${K^*}^c$. Assume that $v^*F_1/vK$ is torsion free and $F_1v^*|Kv$ is separable. Then there exists an embedding $\iota_1:\>(F_1, v^*) \longrightarrow (K^*, v^*)$ over $K$ such that $v^*\iota_1(a) = v^*a$ and $\iota_1(a)v^* = av^*$ for all $a\in F_1$.
\end{lem}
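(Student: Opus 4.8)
The idea is that $F_1 \subseteq {K^*}^c$ is, up to a valuation-preserving $K$-isomorphism, a subfield of $K^*$ itself, because $K^*$ is henselian and $F_1|K$ is a \emph{separable} function field without transcendence defect, so it admits (by Theorem~\ref{sepdefextn}) a separating transcendence basis $\cT = \{x_1,\dots,x_r,y_1,\dots,y_s\}$ with $v^*K(\cT) = v^*F_1$, $K(\cT)v^*$ generated by the residues $y_jv^*$, and with $F_1v^*|K(\cT)v^*$ separable; moreover $v^*F_1/vK$ torsion free and $F_1v^*|Kv$ separable give, via Corollary~\ref{svtb->wtd} and Theorem~\ref{hrwtd} (applied with the \emph{henselian} field $K^*$ — note we do not need $K$ defectless here, only that the ambient $K^*$ is henselian, and the subextension structure lives inside ${K^*}^c \supseteq K^{*h} = K^*$), the required rigidity. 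Here is how I would organize it.

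First I would produce the separating transcendence basis $\cT$ of $F_1|K$ from Theorem~\ref{sepdefextn}, using the hypotheses that $F_1|K$ is separable without transcendence defect, $v^*F_1/vK$ is torsion free, and $F_1v^*|Kv$ is separable; this gives $\cT$ with $v^*K(\cT) = v^*F_1$ and $F_1v^*|K(\cT)v^*$ separable, and $v^*$ restricted to $K(\cT)$ has value group $vK \oplus \bigoplus \bZ v^*x_i$ and residue field $Kv(y_1v^*,\dots,y_sv^*)$. Second, since each $x_i, y_j \in {K^*}^c$, I would approximate them well enough by elements $x_i', y_j' \in K^*$: because $({K^*}^c, v^*)$ is an immediate extension of $(K^*, v^*)$ and the relevant transcendence/separability data are "open" conditions (controlled by finitely many valuation inequalities, using that $\cT$ is a \emph{finite} standard valuation transcendence basis), a sufficiently close approximation $\cT' = \{x_i', y_j'\} \subseteq K^*$ is again algebraically independent over $K$, satisfies $v^*x_i' = v^*x_i$, $x_i'v^* = x_iv^* = 0$ (for the $x_i$), $y_j'v^* = y_jv^*$, and spans the same value group and residue field over $K$; hence there is a valuation-preserving $K$-isomorphism $K(\cT) \to K(\cT')$ sending $x_i \mapsto x_i'$, $y_j \mapsto y_j'$. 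Third, $F_1$ is a finite separable extension of $K(\cT)$; being henselian, $(K^*, v^*)$ admits at most as many extensions as dictated by the separable data, and because $F_1v^*|K(\cT)v^*$ and $v^*F_1/v^*K(\cT)$ match up, the algebraic (in fact separable) extension $F_1|K(\cT)$ embeds over $K(\cT)$, compatibly with the isomorphism $K(\cT)\cong K(\cT')$, into $(K^*, v^*)$ — concretely, write $F_1 = K(\cT)(\theta)$ with $\theta$ separable algebraic, transport the minimal polynomial of $\theta$ along the isomorphism, use Hensel's Lemma in $K^*$ to find a root, and check (via the fundamental inequality and the fact that the residue/value data are already exhausted by $\cT'$) that the induced valued field is $K$-isomorphic to $(F_1, v^*)$. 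This yields $\iota_1 : (F_1, v^*) \to (K^*, v^*)$ over $K$ with $v^*\iota_1(a) = v^*a$ and $\iota_1(a)v^* = av^*$ for all $a \in F_1$.

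The main obstacle will be the approximation step: one must check that replacing the generators $x_i, y_j \in {K^*}^c$ by close enough elements of $K^*$ genuinely preserves algebraic independence over $K$ \emph{and} all the valuation-theoretic bookkeeping (value group, residue field, separability of the residue extension, torsion-freeness) — i.e.\ that these are controlled by finitely many strict valuation inequalities, so that the continuity/openness argument using density of $K^*$ in ${K^*}^c$ applies. Once that is in place, the rest is a routine combination of Hensel's Lemma and the equality case of the fundamental inequality for the finite separable piece $F_1|K(\cT)$. Note that Lemma~\ref{EILem} immediately implies Lemma~\ref{EI}: given the embedding $j : (F, v) \to ({K^*}^c, v^*)$ over $K$ respecting $\rho$ and $\sigma$, set $F_1 := j(F) \subseteq {K^*}^c$ (a separable valued function field without transcendence defect over $K$, with $v^*F_1/vK$ torsion free and $F_1v^*|Kv$ separable, since $j$ respects $\rho$ and $\sigma$), apply Lemma~\ref{EILem} to get $\iota_1 : (F_1, v^*) \to (K^*, v^*)$ over $K$, and compose $\iota := \iota_1 \circ j$; then $\iota$ respects $\rho$ and $\sigma$ because $j$ does and $\iota_1$ preserves values and residues.
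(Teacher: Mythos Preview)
Your overall strategy --- produce a separating standard valuation transcendence basis $\cT$ via Theorem~\ref{sepdefextn}, approximate it by $\cT'\subseteq K^*$ using density of $K^*$ in ${K^*}^c$, then lift the finite separable top $F_1|K(\cT)$ using Hensel --- is exactly the paper's approach. Two points, one minor and one substantive.

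First, your invocation of Theorem~\ref{hrwtd} is misplaced: that theorem requires the base field to be \emph{defectless}, which is not assumed of $(K,v)$ here. Fortunately you do not actually need it; Theorem~\ref{sepdefextn} alone (which has no defectlessness hypothesis) already furnishes the separating transcendence basis with $v^*K(\cT)=v^*F_1$ and $F_1v^*|K(\cT)v^*$ finite separable, and that is all the structural input required.

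Second, and this is the real gap: after you have the field isomorphism $\iota_1:F_1=K(\cT,\theta)\to K(\cT',\theta')$, the pullback $v^*\circ\iota_1$ is merely \emph{some} extension of $v^*|_{K(\cT)}$ to $F_1$, and there may be several. Your appeal to the fundamental inequality and to ``residue/value data already exhausted by $\cT'$'' does not settle this: the value group is indeed exhausted, but the residue extension $F_1v^*|K(\cT)v^*$ need not be trivial, and in any case there may be defect, so the extensions are not distinguished by $e$ and $f$ alone. The paper closes this gap as follows. Before approximating, fix finitely many witnesses $a_2,\dots,a_g\in F_1$ with $a_\ell\in\cO_{v^*}\setminus\cO_{v^*_\ell}$ for each of the other extensions $v^*_\ell$ of $v^*|_{K(\cT)}$ to $F_1$, and fix $a_1\in F_1$ with $F_1v^*=K(\cT)v^*(a_1v^*)$. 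Each $a_\ell$ is a rational function in $\cT$ and $b$ (the primitive element). Now use the Implicit Function Theorem in the henselian field $K^*$ to choose $\cT'$ and $b'$ simultaneously, close enough to $\cT$ and $b$ that, by continuity of these finitely many rational functions, the images $a_\ell':=\iota_1(a_\ell)$ satisfy $v^*a_\ell'\ge 0$ and $a_1'v^*=a_1v^*$. The first condition forces $v^*\circ\iota_1=v^*$ (any other $v^*_\ell$ is ruled out by $v^*_\ell a_\ell<0$), and the second, together with $F_1v^*=K(\cT)v^*(a_1v^*)$, propagates residue preservation from $K(\cT)$ and $a_1$ to all of $F_1$. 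This finite bookkeeping is precisely what turns the local approximation into a global valuation-and-residue preserving embedding.
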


%It is easy to see that under the assumptions of Lemma~\ref{EI} and the fact that $j: (F, v)\to (j(F), v^*)$ is a valued field isomorphism respecting $\rho$ and $\sigma$, setting $F_1 := j(F)$ satisfies all the assumptions of Lemma~\ref{EILem}. Hence, there is an embedding $\iota_1: (F_1, v^*)\to (K^*, v^*)$ over $K$ such that $v^*\iota_1(a) = v^*a$ and $\iota_1(a)v^* = av^*$ for all $a\in F_1$. Taking the composition $\iota := \iota_1\circ j$, we get our required embedding $\iota: (F, v)\to (K^*, v^*)$ over $K$ that respects $\rho$ and $\sigma$.
%So, all that is left is to prove Lemma~\ref{EILem}.

The word ``neighborhood'' in the following proof refers to open sets in the valuation topology. For a polynomial $f\in \cO_{F_1}[X_1, \ldots, X_n]$ (for $n\in\bN$), we denote by $fv^*$ the polynomial in $F_1v^*[X_1, \ldots, X_n]$ that is obtained from $f$ by replacing all its coefficients by their residues. And for a polynomial $f\in F_1[X_1, \ldots, X_n]$ (for $n\in\bN$), we denote by $\iota_1(f)$ the polynomial in $K^*[X_1, \ldots, X_n]$ that is obtained from $f$ by replacing all its coefficients by their images under $\iota_1$.

\begin{proof}[Proof of Lemma~\ref{EILem}]
By Theorem~\ref{sepdefextn}, there is a separating transcendence basis $\cT := \{x_1, \ldots, x_r, y_1, \ldots,$ $y_s\}$ of $F_1|K$, such that the values $\{v^*x_i \mid 1\le i\le r\}$ are rationally independent over $vK$ with $v^*K(\cT) = v^*F_1$, and the residues $\{y_jv^* \mid 1\le j\le s\}$ form a separating transcendence basis of $F_1v^*|Kv$ with $F_1v^*|K(\cT)v^*$ separable and finite. Since $F_1|K(\cT)$ is finite and separable, by the Primitive Element Theorem, there is $b\in F_1$ such that $F_1 = K(\cT, b)$. Let $\widetilde{h}(Z) \in K(\cT)[Z]$ be the minimal polynomial of $b$ over $K(\cT)$. Multiplying $\widetilde{h}$ by a suitable element of $K[\cT]$ to clear the denominators of the coefficients, we obtain a polynomial in $K[\cT][Z]$. This polynomial can be written as  $h(x_1, \ldots, x_r, y_1, \ldots, y_s, Z)$ for some polynomial $h\in K[X_1, \ldots, X_{r+s}, Z]$. In particular, we have
\begin{eqnarray*}
h(x_1, \ldots, x_r, y_1, \ldots, y_s, b) & = & 0, \;\;\;\;\;\mbox{and} \\
\dfrac{\partial h}{\partial Z}(x_1, \ldots, x_r, y_1, \ldots, y_s, b) & \not= & 0.
\end{eqnarray*}
Also, since $F_1|K(\cT)$ is finite, there are only finitely many possible extensions of the valuation $v^*$ from $K(\cT)$ to $F_1$, say $v^*_1, v^*_2, \ldots, v^*_g$ with $v^* = v^*_1$. Choose witnesses $a_\ell \in F_1$ such that $a_\ell\in\cO_{v^*}\setminus\cO_{v^*_\ell}$ for $\ell = 2, \ldots, g$. Since $F_1v^*|K(\cT)v^*$ is finite and separable, by the Primitive Element Theorem again, there is $\bar{a}_1\in F_1v^*$ such that $F_1v^* = K(\cT)v^*(\bar{a}_1)$. Choose $a_1\in F_1$ such that $a_1v^* = \bar{a}_1$. These elements $x_1, \ldots, x_r, y_1, \ldots, y_s, b, a_1, a_2, \ldots, a_g$ are the key players in our proof. Note that since $F_1 = K(\cT, b)$, there are rational functions $h_\ell\in K(X_1, \ldots, X_{r+s}, Z)$ such that $a_\ell = h_\ell(x_1, \ldots, x_r, y_1, \ldots, y_s, b)$ for $\ell = 1, \ldots, g$. Using the density of $(K^*, v^*)$ in $({K^*}^c, v^*)$, we wish to find elements $x_1', \ldots,$ $x_r', y_1', \ldots, y_s', b', a_1', \ldots, a_g'$ in $K^*$ that are ``close enough'' to the corresponding elements in ${K^*}^c$ and can be used to define our required embedding $\iota_1$.

To start with, we fix a small enough neighborhood $U$ of zero such that for all $x_i'\in x_i + U$ and $y_k'\in y_k + U$ we have $v^*x_i' = v^*x_i$ and $y_k'v^* = y_kv^*$, for $1\leq i \leq r$, $1\leq k\leq s$. By the continuity of rational functions, we can even choose $U$ so small that if $b'\in b+U$, then $a_\ell':=h_\ell(x_1', \ldots, x_r', y_1', \ldots, y_s', b')$ belongs to $h_\ell(x_1, \ldots, x_r, y_1, \ldots, y_s, b) + {\mathcal M}=a_\ell + {\mathcal M}$,  for $1\le\ell\le g$. For such $a_\ell'$, it follows that $a_1'v^*=a_1v^*$, and
$v^*a_\ell' \ge 0$ since $v^*a_\ell\ge 0$.

Since $(K^*, v^*)$ is dense in $({K^*}^c, v^*)$, the set $U\cap K^*$ is nonempty and therefore a neighborhood of zero in $K^*$. By the Implicit Function Theorem \cite[Theorem 7.4]{APMZ}, applied to the henselian field $(K^*, v^*)$, there is a neighborhood $V$ of zero in $K^*$, which we can take to be a subset of $U\cap K^*$, such that for $x_i'\in x_i + V$, $1\le i\le r$, and $y_k'\in y_k + V$, $1\le k\le s$, there is $b'\in b + U\cap K^*$ such that $h(x_1', \ldots, x_r', y_1', \ldots, y_s', b') = 0$.

The values $v^*x_1', \ldots, v^*x_r'$ are rationally independent over $vK$ since the same holds for the values $v^*x_1, \ldots, v^*x_r$, and the residues $y_1'v^*, \ldots, y_s'v^*$ are algebraically independent over $Kv$ since the same holds for the residues $y_1v^*, \ldots, y_sv^*$. Consequently, by \cite[Lemma 2.2]{FVK_TF}, the elements of the set $\cT' := \{x_1',\ldots, x_r', y_1', \ldots, y_s'\}$ are algebraically independent over $K$, and the map
\[
x_i\mapsto x_i'\;,\;\;\;y_k\mapsto y_k'\;,\;\;\;1\leq i\leq r\,,\; 1\leq k\leq s,
\]
induces an isomorphism $\iota_1: K(\cT) \rightarrow K(\cT')$. Furthermore, for every $f\in K[\cT]$, written as in \cite[Lemma 2.2]{FVK_TF},
\begin{eqnarray*}
v^*\iota_1(f) & = & \min_k\left(v^*c_k\,+\,\sum_{1\leq i\leq r} \mu_{k,i}v^*x'_i\right) \>=\> \min_k\left(vc_k\,+\,\sum_{1\leq i\leq r}\mu_{k,i} v^*x_i\right) = v^*f
\end{eqnarray*}
showing that $\iota_1$ satisfies $v^*\iota_1(a) = v^*a$ for all $a\in K(\cT)$. If $v^*f = 0$, then
\[
fv^*\>=\>\left(\displaystyle\sum_{\ell}^{} c_{\ell}\,\prod_{1\leq j\leq s} y_j^{\nu_{\ell,j}}\right)v^*\>=\>\displaystyle\sum_{\ell}^{} (c_{\ell}v)\prod_{1\leq j\leq s} (y_j v^*)^{\nu_{\ell,j}}
\]
where the sum runs only over those $\ell=k$ for which $\mu_{k,i}=0$ for all $i$, and a similar formula holds for $\iota_1(f)v^*$ with the same indices $\ell$. Hence,
\begin{eqnarray*}
\iota_1(f)v^* & = & \displaystyle\sum_{\ell}^{} (c_{\ell}v^*)\prod_{1\leq j\leq s} (y_j' v^*)^{\nu_{\ell,j}} \>=\>\displaystyle\sum_{\ell}^{} (c_{\ell}v)\prod_{1\leq j\leq s} (y_j v^*)^{\nu_{\ell,j}} =  fv^*
\end{eqnarray*}
showing that $\iota_1$ satisfies $\iota_1(a)v^* = av^*$ for all $a\in K(\cT)$. Hence, the isomorphism preserves valuation and residue map.

The assignment
$$b\mapsto b'$$
extends $\iota_1$ to a field isomorphism from $F_1$ onto $K(\cT', b')$ over $K(\cT)$ which sends $a_\ell$ to $a_\ell'$. Since $v^*\circ\iota_1$ is a valuation on $F_1$ and since there are only $g$ many extensions of $v^*$ from $K(\cT)$ to $F_1$, we get that
$$v^* \circ \iota_1 = v^*_{\ell_0}$$
for some $1\le\ell_0\le g$. Since $v^*_{\ell_0}(a_{\ell_0}) = v^*(\iota_1(a_{\ell_0})) = v^*(a_{\ell_0}')\ge 0$, we must have that $\ell_0 = 1$. Hence, $v^*\circ\iota_1 = v^*_1 = v^*$, that is, $\iota_1$ is a valued field isomorphism from $(F_1, v^*)$ onto $(K(\cT', b'), v^*)$ over $K(\cT)$.

Finally, recall that $a_1'v^* = a_1v^*$, and hence we obtain $\iota_1(a_1)v^* = a_1v^*$. Moreover, for each $a\in F_1$, we have $av^* = \bar{f}(\bar{a}_1)$ for some $\bar{f}(X)\in K(\cT)v^*[X]$. We choose $f(X)\in K(\cT)[X]$ of the same degree as $\bar{f}$ such that $fv^* = \bar{f}$. Then $v^*(a - f(a_1)) > 0$. Hence, $v^*\iota_1(a - f(a_1)) > 0$. Therefore,
$$\iota_1(a)v^* = \iota_1(f(a_1))v^* = \iota_1(f)(\iota_1(a_1))v^* = \iota_1(f)v^*(\iota_1(a_1)v^*) = fv^*(a_1v^*) = \bar{f}(\bar{a}_1) = av^*.$$
Hence, $\iota_1$ is the required embedding of $(F_1, v^*)$ in $(K^*, v^*)$ over $K$.
\end{proof}

We return to the proof of Theorem~\ref{sewtd}. We have already shown that for every finitely generated subextension $(F|K, v)$ of $(L|K, v)$, there is an embedding $j$ of $(F, v)$ in $({K^*}^c, v^*)$ over $K$ that respects $\rho$ and $\sigma$. Then by Lemma~\ref{EI}, there is an embedding $\iota$ of $(F, v)$ in $(K^*, v^*)$ over $K$ that respects $\rho$ and $\sigma$. Since this holds for every finitely generated subextension $(F|K, v)$ of $(L|K, v)$, it follows from Lemma~\ref{embfingen} that also $(L, v)$ embeds in $(K^*, v^*)$ over $K$. By Proposition~\ref{ec}, this shows that $(K, v)$ is existentially closed in $(L, v)$ in the language $\cL_{\VF}$, and thus we have proved Theorem~\ref{sewtd}.

For future use, we make our result more precise. This is proved by a standard application of saturation. For a demonstration of how saturation is used in this context, see \cite[Lemma 5.7]{FVK_TF}.

\begin{lem} {\bf (Embedding Lemma II)}		\label{EII}\newline
Take a valued field $(K, v)$ (the valuation is allowed to be trivial), a separable extension $(L|K, v)$ without transcendence defect, and an $|L|^+$-saturated henselian extension $(K^*, v^*)$ of $(K, v)$. Also, assume that $vL/vK$ is torsion free, $Lv|Kv$ is separable, and there are embeddings $\rho:\; vL \longrightarrow v^*K^*$ over $vK$ and $\sigma:\; Lv\longrightarrow K^*v^*$ over $Kv$. If for every finitely generated subextension $(F|K, v)$ of $(L|K, v)$, there is an embedding
$$j: (F, v)\longrightarrow({K^*}^c, v^*)$$
over $K$ that respects $\rho$ and $\sigma$, then there exists an embedding
$$\iota:\>(L, v) \longrightarrow (K^*, v^*)$$
over $K$ that respects $\rho$ and $\sigma$.
\end{lem}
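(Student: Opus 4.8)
The plan is to deduce this from Embedding Lemma~I (Lemma~\ref{EI}) together with a compactness/saturation argument, in the spirit of \cite[Lemma~5.7]{FVK_TF}. First note that for \emph{each} finitely generated subextension $(F|K,v)$ of $(L|K,v)$ the hypotheses of Lemma~\ref{EI} are met: $(F|K,v)$ is a separable valued function field without transcendence defect, $vF/vK$ is torsion free (it is a subgroup of $vL/vK$), $Fv|Kv$ is separable (a subextension of $Lv|Kv$, by Theorem~\ref{sepextns}(3)), the restrictions of $\rho$ and $\sigma$ give embeddings $vF\to v^*K^*$ over $vK$ and $Fv\to K^*v^*$ over $Kv$, and by assumption there is an embedding $j:(F,v)\to({K^*}^c,v^*)$ over $K$ respecting $\rho$ and $\sigma$. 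Hence Lemma~\ref{EI} yields an embedding $\iota_F:(F,v)\to(K^*,v^*)$ over $K$ respecting (the restrictions of) $\rho$ and $\sigma$. The difficulty is that these $\iota_F$ need not be compatible as $F$ grows, so they cannot be glued directly to an embedding of $(L,v)$; this is exactly where $|L|^+$-saturation of $(K^*,v^*)$ is used.

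To assemble the $\iota_F$ into a single $\iota$, I would realize a suitable partial type in $(K^*,v^*)$. Introduce a variable $x_a$ for each $a\in L$ and let $\Sigma\big((x_a)_{a\in L}\big)$ consist of: (i) the quantifier-free $\cL_{\VF}$-diagram of $(L,v)$ over $K$ in the variables $x_a$, which encodes that $a\mapsto x_a$ is a field embedding fixing $K$ pointwise and both preserving and reflecting $\cO$, i.e.\ a valued field embedding over $K$; (ii) for each $a\in L\setminus\{0\}$ the formula $v^*x_a=v^*c_a$, where $c_a\in K^*\setminus\{0\}$ is chosen with $v^*c_a=\rho(va)$ (possible since $\rho$ maps into $v^*K^*$); (iii) for each $a\in\cO_L$ the formula $v^*(x_a-d_a)>0$, where $d_a\in\cO_{K^*}$ is chosen with $d_av^*=\sigma(av)$ (possible since $\sigma$ maps into $K^*v^*$). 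Then $\Sigma$ involves $|L|$ variables and at most $|L|$ parameters, all from $K^*$.

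Next I would check that $\Sigma$ is finitely satisfiable in $(K^*,v^*)$. A finite $\Sigma_0\subseteq\Sigma$ mentions only finitely many $a_1,\dots,a_n\in L$; put $F:=K(a_1,\dots,a_n)$, a finitely generated subextension of $(L|K,v)$, and let $\iota_F:(F,v)\to(K^*,v^*)$ be the embedding over $K$ obtained above. Then $\big(\iota_F(a_1),\dots,\iota_F(a_n)\big)$ satisfies $\Sigma_0$: clause (i) because $\iota_F$ is a valued field embedding over $K$; clause (ii) because $v^*\iota_F(a_i)=\rho(va_i)=v^*c_{a_i}$; and clause (iii) because $\iota_F(a_i)v^*=\sigma(a_iv)=d_{a_i}v^*$, i.e.\ $v^*\big(\iota_F(a_i)-d_{a_i}\big)>0$. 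So $\Sigma$ is finitely satisfiable, hence (by compactness) consistent with the elementary diagram of $(K^*,v^*)$ over its parameters.

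Finally, since $(K^*,v^*)$ is $|L|^+$-saturated and $\Sigma$ has at most $|L|<|L|^+$ variables and parameters, $\Sigma$ is realized in $(K^*,v^*)$ — realizing the variables one at a time along a well-ordering of $L$, the parameter set at each stage having size $\le|L|$. Writing $\iota(a)$ for the value realizing $x_a$, clause (i) shows $\iota:(L,v)\to(K^*,v^*)$ is an $\cL_{\VF}$-embedding over $K$, and clauses (ii), (iii) give $v^*\iota(a)=v^*c_a=\rho(va)$ and $\iota(a)v^*=d_av^*=\sigma(av)$ for all $a\in L$, so $\iota$ respects $\rho$ and $\sigma$, as required. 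I expect the only genuine work to be the bookkeeping for the type: encoding ``$\iota$ respects $\rho$ and $\sigma$'' via the $K^*$-parameters $c_a,d_a$ rather than via elements of $v^*K^*$ and $K^*v^*$ directly, and keeping the variable and parameter counts below $|L|^+$; all the substantive valuation theory is already packaged inside Lemma~\ref{EI}.
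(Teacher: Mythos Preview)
Your argument is correct and is precisely the ``standard application of saturation'' the paper invokes (with reference to \cite[Lemma~5.7]{FVK_TF}): apply Lemma~\ref{EI} to each finitely generated subextension to get embeddings into $(K^*,v^*)$ respecting $\rho$ and $\sigma$, then use $|L|^+$-saturation and a partial type encoding the valued-field diagram together with the $\rho$- and $\sigma$-compatibility to realize a global embedding. Your bookkeeping---expressing ``respects $\rho,\sigma$'' via witnesses $c_a,d_a\in K^*$, and realizing the $|L|$-variable type one coordinate at a time---is exactly the right way to make the paper's one-line justification precise.
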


Before we end this section, we state another useful embedding result about separable-algebraically maximal fields. For that, we need the following general Embedding Lemma.

\begin{lem} {\bf (Embedding Lemma III)}\cite[Lemma 6.2]{FVK_TF}		\label{EIII} \newline
Let $(K(x)|K,v)$ be a nontrivial immediate extension of valued fields. If $x$ is the limit of a pseudo-Cauchy sequence of
transcendental type in $(K,v)$, then $(K(x),v)^h$ embeds over $K$ in every $|K|^+$-saturated henselian extension $(K,v)^{*}$ of $(K,v)$.
\end{lem}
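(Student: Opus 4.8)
The statement to prove is Embedding Lemma III (Lemma~\ref{EIII}): for a nontrivial immediate extension $(K(x)|K,v)$ where $x$ is the limit of a pseudo-Cauchy sequence of transcendental type in $(K,v)$, the henselization $(K(x),v)^h$ embeds over $K$ into every $|K|^+$-saturated henselian extension $(K,v)^*$ of $(K,v)$.

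The plan is to use the classical Kaplansky theory of pseudo-Cauchy sequences, specifically the fact (from \cite{IK}) that if $\{x_\alpha\}_{\alpha<\lambda}$ is a pseudo-Cauchy sequence of transcendental type in $(K,v)$ and $x$ is a pseudo-limit, then $K(x)|K$ is a \emph{transcendental} extension and $v$ on $K(x)$ is determined by the rule $vf(x) = $ the eventual constant value of $vf(x_\alpha)$ for every $f\in K[X]$; moreover $(K(x)|K,v)$ is immediate, and this valued field is characterized up to valuation-preserving $K$-isomorphism by the property that $x$ is a pseudo-limit of $\{x_\alpha\}$. So the first step is: use $|K|^+$-saturation of $(K^*,v^*)$ to find a pseudo-limit $y\in K^*$ of the (image of the) pseudo-Cauchy sequence $\{x_\alpha\}$. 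This is a standard saturation argument — the partial type over $K$ asserting ``$v^*(y-x_\alpha)$ is eventually increasing in the prescribed way'', equivalently $v^*(y - x_\alpha) = v(x_{\alpha+1}-x_\alpha)$ for all $\alpha$, is finitely satisfiable in $(K^*,v^*)$ because each finite subset is witnessed by some $x_\beta$ with $\beta$ large enough (this uses that the sequence has no pseudo-limit realizing it trivially — but here we only need finite satisfiability, which holds since $K\subseteq K^*$), and has cardinality $\le |K|$, hence is realized by saturation.

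The second step is to transfer the algebraicity/transcendence information: since $\{x_\alpha\}$ is of transcendental type \emph{in $(K,v)$}, it fixes the value of every $f\in K[X]$, i.e. $vf(x_\alpha)$ is eventually constant; this is a property of the sequence together with $K$, so it continues to say the same thing computed in $(K^*,v^*)$ over the subfield $K$ (the polynomials still have coefficients in $K$). Hence for the realized pseudo-limit $y\in K^*$ we get $v^*f(y) = $ (eventual value of $v f(x_\alpha)$) for every $f\in K[X]$. By Kaplansky's theorem this forces $y$ to be transcendental over $K$ and the map $x\mapsto y$ to induce a valuation-preserving $K$-isomorphism $(K(x),v)\xrightarrow{\sim}(K(y),v^*)$. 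Composing with the inclusion $K(y)\hookrightarrow K^*$ gives an embedding of $(K(x),v)$ over $K$ into $(K^*,v^*)$.

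The final step is to upgrade from $K(x)$ to its henselization: since $(K^*,v^*)$ is henselian and $(K(x),v)^h$ is the minimal henselian extension of $(K(x),v)$ (unique up to valuation-preserving isomorphism, being an immediate separable-algebraic extension), the embedding $(K(x),v)\hookrightarrow(K^*,v^*)$ extends uniquely to an embedding $(K(x),v)^h\hookrightarrow(K^*,v^*)$ over $K$ by the universal property of the henselization. I expect the main obstacle to be making the saturation argument in the first step fully rigorous — in particular, writing down precisely which $\cL_{\VF}$-formulas over $K$ express ``$y$ is a pseudo-limit of $\{x_\alpha\}$'' and ``$v^*f(y)$ has the prescribed value for each $f$'', and checking that the resulting set of formulas is finitely satisfiable in $(K^*,v^*)$ (one must be slightly careful: the relevant partial type has size $|\lambda|+|K[X]| \le |K|$, so $|K|^+$-saturation suffices) — the remaining steps are then routine invocations of \cite{IK} and of the universal property of henselizations.
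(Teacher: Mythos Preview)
The paper does not give its own proof of this lemma; it is simply quoted from \cite[Lemma~6.2]{FVK_TF}, so there is nothing in the present paper to compare against. Your argument is correct and is exactly the standard proof: realize a pseudo-limit $y\in K^*$ by $|K|^+$-saturation (the type $\{v^*(Y-x_\alpha)=v(x_{\alpha+1}-x_\alpha)\}_\alpha$ is finitely satisfiable via later terms $x_\beta$ and has size $\le |K|$), invoke Kaplansky's Theorem~2 in \cite{IK} to obtain a valuation-preserving $K$-isomorphism $K(x)\to K(y)$ (transcendental type guarantees $v^*f(y)$ equals the eventual value of $vf(x_\alpha)$ for every $f\in K[X]$, so both $x$ and $y$ are transcendental over $K$ and the map is well defined and value preserving), and then extend to the henselization using that $(K^*,v^*)$ is henselian. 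This is essentially the argument of \cite[Lemma~6.2]{FVK_TF} as well.
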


\par\medskip
The model theoretic application of Embedding Lemma~III is:

\begin{cor}		\label{imrf}
Let $(K,v)$ be a henselian field and $(K(x)|K,v)$ an immediate extension such that $x$ is the limit of a pseudo-Cauchy sequence of transcendental type in $(K,v)$. Then $(K,v)\ec (K(x),v)^h$ in $\cL_{\VF}$.
\end{cor}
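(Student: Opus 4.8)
\textbf{Proof plan for Corollary~\ref{imrf}.}
The plan is to deduce this immediately from Embedding Lemma~III (Lemma~\ref{EIII}) together with Proposition~\ref{ec}. First I would fix an $|K|^+$-saturated elementary extension $(K^*,v^*)$ of $(K,v)$. Since ``henselian'' is an elementary property in $\cL_{\VF}$, the field $(K^*,v^*)$ is again henselian, so it qualifies as an $|K|^+$-saturated henselian extension of $(K,v)$ in the sense required by Lemma~\ref{EIII}. Note that $|K(x)|=|K|$, so an $|K|^+$-saturated extension is exactly what Lemma~\ref{EIII} asks for.

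Next I would apply Lemma~\ref{EIII} to the nontrivial immediate extension $(K(x)|K,v)$: since $x$ is the limit of a pseudo-Cauchy sequence of transcendental type in $(K,v)$, we obtain an embedding of $(K(x),v)^h$ over $K$ into $(K^*,v^*)$. In particular, $(K(x),v)^h$ embeds over $K$ into this elementary extension $(K^*,v^*)$ of $(K,v)$. Now Proposition~\ref{ec} (the first, ``forward'' direction) applies with $\eu A = (K,v)$, $\eu B = (K(x),v)^h$, and $\eu A^* = (K^*,v^*)$: we have $\eu A \subseteq \eu B$, $\eu A \subseteq \eu A^*$, $\eu B$ embeds over $\eu A$ in $\eu A^*$, and $\eu A \ec \eu A^*$ because $(K^*,v^*)$ is an elementary extension of $(K,v)$ (hence in particular existentially closed over it). The conclusion of Proposition~\ref{ec} is then precisely that $(K,v)\ec(K(x),v)^h$ in $\cL_{\VF}$.

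There is essentially no obstacle here; the statement is a routine packaging of the two cited results. The only points that need a word of care are the bookkeeping of cardinalities (using $|K(x)|=|K|$ so that $|K|^+$-saturation suffices) and the observation that an elementary extension is automatically existentially closed, so that the hypothesis $\eu A\ec\eu A^*$ of Proposition~\ref{ec} is met. Since $(K,v)$ is already assumed henselian, $(K(x),v)^h$ is an (immediate, separable-algebraic) extension of $(K,v)$ containing $K(x)$, and the conclusion $(K,v)\ec(K(x),v)^h$ in $\cL_{\VF}$ is exactly what is asserted.
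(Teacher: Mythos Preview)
Your proposal is correct and follows exactly the paper's own argument: choose an $|K|^+$-saturated elementary extension of $(K,v)$, observe it is henselian, apply Embedding Lemma~III, and conclude via Proposition~\ref{ec}. Your added remarks on cardinalities and on elementary extensions giving $\eu A\ec\eu A^*$ are just spelling out what the paper leaves implicit.
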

\begin{proof}
Choose $(K,v)^*$ to be a $|K|^+$-saturated elementary extension of $(K,v)$. Since ``henselian'' is an elementary property, $(K,v)^*$ will also be henselian. Apply Embedding Lemma~III and Proposition~\ref{ec}.
\end{proof}

We use this to prove the following result about separable-algebraically maximal fields.
\begin{prop}		\label{sam}
A separable-algebraically maximal field is existentially closed in $\cL_{\VF}$ in every henselization of an immediate rational function field of transcendence degree $1$.
\end{prop}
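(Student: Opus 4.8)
The plan is to let $(K,v)$ be separable-algebraically maximal and let $(K(x)|K,v)$ be an immediate transcendental extension, and to show that $(K,v)$ is existentially closed in $(K(x),v)^h$ in $\cL_{\VF}$. Since $(K,v)$ is separable-algebraically maximal it is henselian, so it coincides with its henselization; this lets us apply the machinery of Corollary~\ref{imrf} once we know the right thing about $x$. The key dichotomy is provided by the theory of pseudo-Cauchy sequences of \cite{IK}: because $K(x)|K$ is immediate and transcendental, $x$ is the limit of a pseudo-Cauchy sequence $\{x_\alpha\}_{\alpha<\lambda}$ in $(K,v)$ with no limit in $K$, and such a sequence is either of transcendental type or of algebraic type.

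First I would dispose of the transcendental-type case: if $\{x_\alpha\}$ is of transcendental type, then $x$ is the limit of a pseudo-Cauchy sequence of transcendental type in the henselian field $(K,v)$, so Corollary~\ref{imrf} applies directly and gives $(K,v)\ec(K(x),v)^h$ in $\cL_{\VF}$, which is exactly what we want. The substantive work is the algebraic-type case. Here the relevant fact is that a pseudo-Cauchy sequence of algebraic type cannot have a limit generating a \emph{purely transcendental} immediate extension unless... — more precisely, if $\{x_\alpha\}$ is of algebraic type with minimal polynomial $f$, then $f$ acquires a root in the pseudo-completion, and one shows that $(K,v)$ having a transcendental immediate extension of this form forces the existence of a proper immediate \emph{algebraic} extension of $(K,v)$, or else that $f$ can be taken purely inseparable. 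The clean way to exploit separable-algebraic maximality is via Theorem~\ref{exmaxvfth}: for every separable polynomial $g\in K[X]$ there is $x_0\in K$ with $vg(x_0)\geq vg(y)$ for all $y\in K$; this rules out algebraic-type pseudo-Cauchy sequences whose minimal polynomial is separable. So the remaining scenario is that $x$ is the limit of an algebraic-type pseudo-Cauchy sequence whose minimal polynomial $f$ is purely inseparable, say $f(X)=X^{p^n}-c$ with $c\in K$.

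In that last scenario I would argue as follows: the extension $K(x)|K$ being transcendental and immediate with $x^{p^n}$ approximated in value by elements of $K$, one checks that $K(x^{p^n})=K$ would follow if $c$ had a $p^n$-th root in $K$, so in fact the obstruction is genuinely inseparable in nature. The point to make is that existential $\cL_{\VF}$-sentences with parameters in $K$ that hold in $(K(x),v)^h$ are witnessed by elements of $K(x)^h$, and any such witness satisfies a polynomial equation over $K$; using that $K(x)^h|K(x)$ is immediate separable-algebraic and $K(x)|K$ is regular over the separable-algebraically maximal (hence separably defectless and henselian) field $(K,v)$, together with the valuation-theoretic analysis of how $\{x_\alpha\}$ fixes or increases values of polynomials, one embeds $K(x)^h$ over $K$ into a suitable $|K|^+$-saturated elementary extension $(K,v)^*$ of $(K,v)$. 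The main obstacle — and the place where separable-algebraic maximality (rather than mere henselianity) is doing real work — is precisely handling the algebraic-type case with an inseparable minimal polynomial: one must show that such a configuration does not let $(K(x),v)^h$ satisfy any existential $\cL_{\VF}$-sentence over $K$ that $(K,v)$ does not already satisfy, which ultimately comes down to the observation that a purely inseparable polynomial over $K$ cannot be the minimal polynomial of an algebraic-type pseudo-Cauchy sequence without $(K,v)$ failing to be separable-algebraically maximal after passing to an appropriate finite separable extension — combined again with Corollary~\ref{imrf} applied to the separable part of the approximation data.
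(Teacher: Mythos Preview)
Your transcendental-type case is fine and matches the paper. The gap is in the algebraic-type case, and specifically in the purely inseparable sub-case: there you stop giving an argument and instead offer a sequence of hopeful phrases (``one checks'', ``the point to make is'', ``one embeds'', ``ultimately comes down to'') none of which is backed by a concrete step. Saying that a purely inseparable minimal polynomial forces a failure of separable-algebraic maximality ``after passing to an appropriate finite separable extension'' is not a proof --- you do not say which extension, why separable-algebraic maximality is inherited by it, or how the inseparable data becomes separable there. As stated, your proposal does not close the algebraic-type case.

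The paper's route avoids splitting into separable/inseparable altogether. It proves the auxiliary Lemma~\ref{pcc}: in a separable-algebraically maximal field, \emph{every} pseudo-Cauchy sequence of algebraic type is already Cauchy. The key input for that lemma is the nontrivial fact, quoted from \cite[Corollary~6.8]{FVK_AS}, that the completion $(K^c,v)$ of a separable-algebraically maximal field is algebraically maximal. Granting Lemma~\ref{pcc}, the algebraic-type case is disposed of in two lines: Kaplansky's Theorem~3 produces an algebraic pseudo limit $y$, Cauchy-ness forces the limit to be unique, hence $x=y$, contradicting transcendence of $x$. So $\{x_\alpha\}$ must be of transcendental type and Corollary~\ref{imrf} applies. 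What you are missing is precisely an argument equivalent to Lemma~\ref{pcc} (or a direct invocation of the algebraic maximality of $K^c$); without it the inseparable minimal polynomial case is genuinely open in your write-up.
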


To prove this proposition, we need one more lemma first.
\begin{lem}		\label{pcc}
Let $(K, v)$ be separable-algebraically maximal. Any pseudo-Cauchy sequence in $K$ of algebraic type is automatically a Cauchy sequence and hence has a unique limit.
\end{lem}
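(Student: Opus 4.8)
The plan is to show that a pseudo-Cauchy sequence $\{x_\alpha\}_{\alpha<\lambda}$ of algebraic type in a separable-algebraically maximal field $(K,v)$ cannot have the jumps in value of its minimal polynomial grow unboundedly, and in fact the sequence must already be Cauchy in the valuation topology. Recall that ``pseudo-Cauchy'' means the values $v(x_\beta-x_\alpha)$ are eventually strictly increasing in $\beta>\alpha$; to upgrade this to ``Cauchy'' we must show these values are cofinal in $vK$ (equivalently, that the ``pseudo-limit'' is in fact a genuine limit). Let $f\in K[X]$ be a minimal polynomial of the sequence, so that $v f(x_\alpha)$ is eventually strictly increasing, while $v g(x_\alpha)$ is eventually constant for every $g\in K[X]$ of smaller degree.

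First I would invoke the valuation-theoretic characterization from Theorem~\ref{exmaxvfth}: since $(K,v)$ is separable-algebraically maximal it is henselian, and for the separable polynomial $f$ (if $f$ is not separable, replace it by a separable polynomial of the same or smaller degree having the same behaviour on the sequence, using that in characteristic $p$ one can factor out $p$-th powers of $X$ — or argue directly that a minimal polynomial of algebraic type can be taken separable, which is standard from Kaplansky's theory) there is $x_0\in K$ with $v f(x_0)\geq v f(y)$ for all $y\in K$; in particular $v f(x_0)\geq v f(x_\alpha)$ for all $\alpha$. Now I would use the standard expansion (Taylor/Newton) of $f$ around $x_0$: writing $f(x_\alpha)=\sum_{i} f_i(x_0)(x_\alpha-x_0)^i$ where $f_i$ are the Hasse derivatives, and comparing with the analogous expansion around $x_\beta$, one extracts that the value $v(x_\alpha - x_0)$ must be bounded below and that increasing $v f(x_\alpha)$ forces $v(x_\alpha - x_0)$ to increase. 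The key point to isolate is: if the sequence were pseudo-Cauchy but not Cauchy, the values $v(x_\beta-x_\alpha)$ would be bounded above by some $\gamma\in vK$, and then one constructs (using that $vf(x_0)$ is maximal) an immediate algebraic extension of $K$ realizing the pseudo-limit — contradicting separable-algebraic maximality, provided the limit can be taken separable over $K$, which is exactly what minimality of $\deg f$ and the separability of $f$ guarantee via Kaplansky's lemma that a pseudo-limit of an algebraic-type sequence with separable minimal polynomial generates an immediate \emph{separable}-algebraic extension.

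So the core of the argument is a dichotomy: either $\{x_\alpha\}$ is a Cauchy sequence (and we are done, the limit being unique by the ultrametric inequality and lying in $K$ once we also know $(K,v)$ is complete enough — but actually we only need the limit to exist in $K$, which follows because a Cauchy sequence whose value-jumps are cofinal and which is of algebraic type pins down a unique element of $\widehat K$ that is algebraic and separable over $K$, hence in $K$ by maximality), or the value-jumps are bounded, in which case the pseudo-limit generates a proper immediate separable-algebraic extension, contradicting Theorem~\ref{exmaxvfth}. I would structure the write-up as: (1) reduce to $f$ separable; (2) apply Theorem~\ref{exmaxvfth} to get the optimizing $x_0$; (3) show $v(x_\alpha-x_0)\to\infty$ cofinally in $vK$, i.e. $\{x_\alpha\}$ is Cauchy, by ruling out the bounded alternative using Kaplansky's construction of the immediate extension and separable-algebraic maximality; (4) conclude the limit exists and is unique by henselianity/maximality.

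The main obstacle I anticipate is the separability bookkeeping in characteristic $p$: one must be careful that the minimal polynomial of the algebraic-type sequence, or the element realizing the pseudo-limit, can genuinely be taken separable over $K$, since separable-algebraic maximality gives nothing about purely inseparable extensions (as the introduction of this very paper emphasizes). This is handled by the classical fact (Kaplansky) that an algebraic-type pseudo-Cauchy sequence admits a minimal polynomial that one may assume has minimal degree, and such a polynomial of minimal degree is necessarily separable — if it had a nontrivial $p$-th-power part one could pass to the polynomial in $X^{1/p}$ or differentiate to lower the degree while preserving the strictly-increasing-value property, contradicting minimality. Once separability of the relevant polynomial is secured, the rest is the routine ultrametric estimate plus an appeal to Theorem~\ref{exmaxvfth}.
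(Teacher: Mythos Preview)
Your argument has a genuine gap at the separability step. You assert that the minimal polynomial $f$ of an algebraic-type pseudo-Cauchy sequence can be taken separable, and justify this by saying that otherwise ``one could pass to the polynomial in $X^{1/p}$ or differentiate to lower the degree while preserving the strictly-increasing-value property, contradicting minimality.'' Neither move works. If $f(X)=g(X^p)$ with $g\in K[Y]$, then $f'=0$, so differentiation yields nothing; and $g$ is a polynomial in a different variable, evaluated at the \emph{new} sequence $(x_\alpha^p)$, not at $(x_\alpha)$, so its smaller degree does not contradict the minimality of $f$ for the original sequence. Nothing in Kaplansky's theory forces the minimal polynomial to be separable, and in an imperfect field it can perfectly well happen that the pseudo-limit $a$ produced by Kaplansky's Theorem~3 is purely inseparable over $K$. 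In that case $K(a)|K$ is an immediate \emph{inseparable} algebraic extension, which does not contradict separable-algebraic maximality, and your contradiction evaporates. (One could try to salvage the idea by replacing $(x_\alpha)$ with $(x_\alpha^{p^k})$ until the minimal polynomial becomes separable and then pulling the Cauchy property back through Frobenius, but that is not the argument you wrote, and it requires additional care.)

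The paper sidesteps this issue entirely by a different and much shorter route. Assuming the sequence is not Cauchy, Kaplansky gives an immediate algebraic extension $K(a)|K$ with $a\notin K^c$. Since the extension is immediate, $K^c\subseteq K(a)^c$, and one checks that $K^c(a)|K^c$ is again immediate and algebraic. The key external input is then \cite[Corollary~6.8]{FVK_AS}: the completion of a separable-algebraically maximal field is \emph{algebraically} maximal (not merely separable-algebraically maximal). Hence $K^c(a)=K^c$, i.e.\ $a\in K^c$, a contradiction. No separability bookkeeping is needed at all, because the passage to the completion upgrades ``separable-algebraically maximal'' to ``algebraically maximal'', which kills every immediate algebraic extension regardless of separability.
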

\begin{proof}
Suppose $(x_\alpha)_{\alpha < \lambda}$ (with $\lambda$ a limit ordinal) is a pseudo-Cauchy sequence of algebraic type which is not Cauchy.

By Kaplansky \cite[Theorem 3]{IK}, there is an immediate extension $(K(a)|K, v)$ such that $a$ is a pseudo limit of the sequence $(x_\alpha)$ and $P(a) = 0$, where $P(x)$ is a minimal polynomial of $(x_\alpha)$. By assumption, $a\not\in K^c$. Now consider the following diagram.

\begin{displaymath}
	\xymatrix {
		& K(a)^c & \\
		& K^c(a) \ar@{-}[u] & \\
		K(a) \ar@{-}[ur] & & K^c \ar@{-}[ul] \\
		& K \ar@{-}[ul] \ar@{-}[ur] &
	}
\end{displaymath}

Since $(K(a)|K, v)$ is immediate, in particular $vK$ is cofinal in $vK(a)$, we know that $K^c\subseteq K(a)^c$. Since $(K^c|K, v)$, $(K(a)^c|K(a), v)$ and $(K(a)|K, v)$ are immediate extensions, it follows that the extension $(K(a)^c|K^c, v)$, and hence the subextension $(K^c(a)|K^c, v)$, is immediate. Moreover, $K^c(a)|K^c$ is an algebraic extension. But $(K^c, v)$ is algebraically maximal, since $(K, v)$ is separable-algebraically maximal \cite[Corollary 6.8]{FVK_AS}. Thus, $(K^c, v)$ does not admit a proper immediate algebraic extension, which implies that $K^c(a) = K^c$. In other words, $a\in K^c$, which gives the required contradiction.
\end{proof}

Now we are ready to prove Proposition~\ref{sam}.
\begin{proof}
Let $(K, v)$ be a separable-algebraically maximal valued field, and $(K(x)|K, v)$ be a proper immediate extension of transcendence degree 1. By \cite[Theorem 1]{IK}, there exists a pseudo-Cauchy sequence $(x_\alpha)_{\alpha < \lambda}$ in $K$ without a pseudo limit in $K$ such that $x$ is a pseudo limit of $(x_\alpha)$.

If $(x_\alpha)$ were of algebraic type, then by \cite[Theorem 3]{IK} again, there would be an immediate algebraic extension $K(y)$ of $K$ with $y$ a pseudo limit of $(x_\alpha)$. But by Lemma~\ref{pcc}, $(x_\alpha)$ is Cauchy, and has a unique limit. Hence, $x = y$. In particular, $x$ would be algebraic over $K$, which gives a contradiction.

Thus, $(x_\alpha)$ is of transcendental type. Since a separable-algebraically maximal field is henselian, the result now follows by Corollary~\ref{imrf}.
\end{proof}

\section{The Separable Relative Embedding Property}			\label{relEmb}
Inspired by the assertion of Lemma~\ref{EI}, we define a property that will play a key role in our approach to the model theory of separably tame fields. Let {\bf C} be a class of valued fields. We will say that {\bf C} has the \bfind{Separable Relative Embedding Property}, if the following holds:
\par\smallskip\noindent
if $(L,v),(K^*,v^*)\in {\bf C}$ with common subfield $(K,v)$ such that
\newline
$\bullet$ \ $(K, v)$ is separably tame, \newline
$\bullet$ \ $L|K$ is separable, \newline
$\bullet$ \ $(L, v)$ is $\aleph_0$-saturated and $(K^*,v^*)$ is $|L|^+$-saturated, \newline
$\bullet$ \ $vL/vK$ is torsion free and $Lv|Kv$ is separable, \newline
$\bullet$ \ there are embeddings $\rho:\; vL \longrightarrow v^*K^*$ over $vK$ and $\sigma:\; Lv \longrightarrow K^*v^*$ over $Kv$, \newline
then there exists an embedding $\iota:\>(L, v) \longrightarrow (K^*, v^*)$ over $K$ that respects $\rho$ and $\sigma$.

\par\smallskip
We now define another property of {\bf C} which is very important for our purposes. If ${\eu C}\subseteq {\eu A}$ and ${\eu C}\subseteq {\eu B}$ are extensions of $\cL$-structures, then we will write ${\eu A} \equiv_{\eu C} {\eu B}$ if $({\eu A},{\eu C}) \equiv ({\eu B},{\eu C})$ in the language $\cL_{\eu C}$ augmented by constant names for the elements of ${\eu C}$. If for every two valued fields $(L,v), (F,v)\in{\bf C}$ and every common separably tame subfield $(K,v)$ of $(L,v)$ and $(F,v)$ such that $L|K$ and $F|K$ are separable, $vL/vK$ is torsion free and $Lv|Kv$ is separable, the side conditions $vL\equiv_{vK} vF$ and $Lv\equiv_{Kv} Fv$ imply that $(L,v)\equiv_{(K,v)} (F,v)$ in $\cL_{\VF}$, then we call {\bf C} {\bf separably relatively subcomplete}. If we can drop the requirements of $L|K$ and $F|K$ being separable, then we say {\bf C} is {\bf relatively subcomplete}. Analogously, we define the notion of {\bf separably relatively model complete}, which is basically relatively model complete in $\cL_{\VF}$ over separable extensions only. Note that if {\bf C} is a relatively subcomplete class of separably tame fields, then {\bf C} is relatively model complete in $\cL_{\VF}$ : the side conditions $vK\prec vL$ and $Kv\prec Lv$ imply that $vL/vK$ is torsion free and $Lv|Kv$ is separable (by Lemma~\ref{sica}) and that $vK\equiv_{vK} vL$ and $Kv\equiv_{Kv}Lv$. Hence, if {\bf C} is relatively subcomplete, then we obtain $(K, v)\equiv_{(K, v)} (L, v)$, i.e., $(K, v)\prec (L, v)$. But relative model completeness is weaker than relative subcompleteness, because $vL\equiv_{vK} vF$ does not imply that $vK\prec vL$, and $Lv\equiv_{Kv} Fv$ does not imply that $Kv\prec Lv$. Analogously, separable relative subcompleteness implies separable relative model completeness, but not vice versa.

The following lemma shows that the Separable Relative Embedding Property is a powerful property.
\begin{lem}		\label{REPimp}
Take an elementary class {\bf C} of separably tame fields which has the Separable Relative Embedding Property. Assume, in addition, that all fields in {\bf C} of positive characteristic have a fixed finite $p$-degree $e$. Then {\bf C} is separably relatively subcomplete and separably relatively model complete. The AKE$^\exists$ Principle is satisfied in $\cL_{\VF}$ by all separable extensions $(L|K, v)$ such that both $(K, v), (L, v)\in {\bf C}$. Moreover, if all fields in {\bf C}\linebreak are of fixed equal characteristic, then {\bf C} is relatively complete in $\cL_{\VF}$ (equivalently, in $\cL_\rQ$).

In the case when additionally all fields in {\bf C} are of a fixed characteristic, then {\bf C} is relatively subcomplete and relatively model complete in $\cL_{\rQ}$, and the AKE$^\exists$ Principle is satisfied in $\cL_\rQ$ by all extensions $(L|K, v)$ such that both $(K, v), (L, v)\in{\bf C}$.
\end{lem}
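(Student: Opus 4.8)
The plan is to derive every assertion from the Separable Relative Embedding Property by the classical Ax--Kochen--Ershov bootstrapping, so that no further valuation theory is needed. Throughout I will use that $\mathbf{C}$ is elementary (hence closed under elementary extensions), that highly saturated members of $\mathbf{C}$ have correspondingly saturated value groups and residue fields (Corollary~\ref{interpret}), the dictionary between embeddings and the relations $\ec$, $\prec$ given by Proposition~\ref{ec}, and Lemma~\ref{sica} (the side conditions make the value-group quotient torsion free and the residue-field extension regular, hence separable). In characteristic $p$ the fixed finite $p$-degree $e$ enters through one observation that I will invoke repeatedly: if $(A,v)\subseteq(B,v)$ lie in $\mathbf{C}$ with $B|A$ separable, then a $p$-basis of $A$ has $e$ elements and stays $p$-independent in $B$ (Theorem~\ref{pindep}), hence is a $p$-basis of $B$; so $A$ and $B$ share a $p$-basis, and in particular separability over a base survives the elementary enlargements I will perform.

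First I would establish AKE$^\exists$ in $\cL_{\VF}$. Given a separable $(L|K,v)$ with $(K,v),(L,v)\in\mathbf{C}$ and $vK\ec vL$, $Kv\ec Lv$: Lemma~\ref{sica} gives that $vL/vK$ is torsion free and $Lv|Kv$ is separable; replacing $(L,v)$ by an $\aleph_0$-saturated elementary extension in $\mathbf{C}$ (which, by the common-$p$-basis observation, keeps $L|K$ separable and leaves $(K,v)\ec(L,v)$ and the side conditions intact) I may assume $(L,v)$ is $\aleph_0$-saturated. I then take an $|L|^+$-saturated elementary extension $(K^*,v^*)\in\mathbf{C}$ of $(K,v)$, extract embeddings $\rho\colon vL\to v^*K^*$ over $vK$ and $\sigma\colon Lv\to K^*v^*$ over $Kv$ from Corollary~\ref{interpret} and Proposition~\ref{ec}, apply the Separable Relative Embedding Property to obtain $\iota\colon(L,v)\to(K^*,v^*)$ over $(K,v)$, and conclude $(K,v)\ec(L,v)$ by Proposition~\ref{ec}.

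Next I would prove separable relative subcompleteness; separable relative model completeness then follows exactly as in the remark preceding the lemma (apply subcompleteness with the two fields being $(K,v)$ and $(L,v)$, after noting $vK\prec vL$, $Kv\prec Lv$ give torsion-freeness, separability, and $\equiv_{vK}$, $\equiv_{Kv}$), and relative completeness in $\cL_{\VF}$ follows by applying subcompleteness over the prime field equipped with the trivial valuation (a separably tame common subfield of any two members of $\mathbf{C}$ of one equal characteristic; being perfect and trivially valued, all its field extensions are separable and the torsion-freeness and residue-separability conditions hold automatically, and the side conditions reduce to $vK\equiv vL$, $Kv\equiv Lv$). For subcompleteness itself, given $(L,v),(F,v)\in\mathbf{C}$ with a common separably tame subfield $(K,v)$ and the stated hypotheses, I would pass to $\mu$-saturated elementary extensions $(L^*,v),(F^*,v)\in\mathbf{C}$ of cardinality $\mu$ ($\mu$ large and regular), use that their value groups (resp.\ residue fields) are $\mu$-saturated of size $\leq\mu$ and elementarily equivalent over $vK$ (resp.\ $Kv$), hence isomorphic over it, and run a back-and-forth between $(L^*,v)$ and $(F^*,v)$ over $(K,v)$ whose partial data are valued-field isomorphisms between separably tame subfields, extended in either direction by the Separable Relative Embedding Property. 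This produces an isomorphism $(L^*,v)\cong(F^*,v)$ over $(K,v)$, whence $(L,v)\prec(L^*,v)\cong(F^*,v)\succ(F,v)$ over $(K,v)$ and $(L,v)\equiv_{(K,v)}(F,v)$. For the passage to $\cL_\rQ$ (all of $\mathbf{C}$ of one fixed characteristic): in characteristic $0$ the predicates $Q_m$ are trivial, every extension is separable, and $\cL_\rQ$ agrees with $\cL_{\VF}$ on these structures, so the three $\cL_\rQ$-claims are the $\cL_{\VF}$-claims already proved; in characteristic $p$, an $\cL_\rQ$-extension within $\mathbf{C}$ is automatically separable and has a common $p$-basis, so Lemma~\ref{VF2Q} transfers $\ec$ between the two languages (giving AKE$^\exists$ in $\cL_\rQ$, the separability hypothesis now being vacuous), while $\equiv_{(K,v)}$ and $\prec$ transfer since the two languages have the same definable sets.

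The step I expect to be the main obstacle is the back-and-forth in the subcompleteness proof. One must organize the construction so that at every stage the hypotheses of the Separable Relative Embedding Property are reinstated: $\mathbf{C}$-membership (automatic, $\mathbf{C}$ being elementary), the torsion-freeness and separability side conditions (inherited via Lemma~\ref{sica} at the elementary steps and the common-$p$-basis observation for the field extensions --- this is exactly where finiteness of the $p$-degree is indispensable), and the separable tameness of the intermediate subfields, for which I would use Lemmas~\ref{septame}, \ref{phtd} and~\ref{Xsrac} to control how a separably tame subfield is enlarged by finitely many elements while keeping its value group and residue field under control. All of this has to be carried out compatibly with the fixed isomorphisms of value groups and residue fields, and it must also cover the case where $v$ is trivial on the base --- which is precisely why the Separable Relative Embedding Property was formulated allowing a trivially valued base.
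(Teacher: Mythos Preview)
Your overall strategy matches the paper's: AKE$^\exists$ via one application of the Separable Relative Embedding Property, separable relative subcompleteness via a back-and-forth built on that same property, and then model completeness, completeness over the prime field, and the $\cL_\rQ$ upgrades exactly as you outline. Your common-$p$-basis observation is precisely the mechanism the paper uses to propagate separability through the construction, and your treatment of the AKE$^\exists$ step and the derivation of completeness from subcompleteness are essentially identical to the paper's.

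The place where your sketch diverges from the paper, and where it is not yet watertight, is the organization of the back-and-forth. You propose a single pair of $\mu$-saturated models $(L^*,v),(F^*,v)$ with partial data living on intermediate separably tame subfields. The paper instead builds two elementary chains $(L_i,v)_{i<\omega}$ and $(F_i,v)_{i<\omega}$ with increasing saturation and with $vL_i=vF_i$ and $L_iv=F_iv$ at every level, and at step $i$ embeds the \emph{full} model $L_i$ (or $F_i$) into the other chain. This matters for the hypothesis ``$vL/vK$ torsion free'' in the Separable Relative Embedding Property. In the paper's setup, the base at step $i$ is the image $\iota_iL_i$, which has value group $vL_i=vF_i$; the extension one must control is therefore $vF_{i+1}/vF_i$, and this is torsion free by Lemma~\ref{sica} because $F_i\prec F_{i+1}$. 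In your single-pair setup, the base at an intermediate step is an embedded copy of some $M_i\subseteq L^*$, and there is no reason the quotient $vM_{i+1}/vM_i$ should be torsion free: $p$-divisibility of $vM_i$ kills only the $p$-torsion, and Lemma~\ref{sica} does not apply since $M_i\prec M_{i+1}$ is not guaranteed. (Separability of the residue extension, by contrast, is automatic in the nontrivially valued case because $M_iv$ is perfect.) Your instinct to invoke Lemma~\ref{Xsrac} is relevant---one could close $M_i$ up to a relatively algebraically closed subfield after throwing in a valuation transcendence basis, as in Lemma~\ref{critREP}---but that is substantially more work than the paper's chain device, and it also does not by itself give the $\aleph_0$-saturation of the source that the property demands. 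The cleanest fix is simply to adopt the elementary-chain organization with matched value groups and residue fields; once you do that, your worry about ``separable tameness of intermediate subfields'' evaporates, since every base in the back-and-forth is a full member of $\mathbf C$.
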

\begin{proof}
Let us first show that $(L|K, v)$ satisfies the AKE$^\exists$ Principle in $\cL_{\VF}$ whenever $(K, v), (L, v)\in{\bf C}$. So, assume that $vK\prec_\exists vL$ and $Kv\prec_\exists Lv$. We take an $\aleph_0$-saturated elementary extension $(\widetilde{L}, v)$ of $(L, v)$. Since ${\bf C}$ is assumed to be an elementary class, $(L, v)\in{\bf C}$ implies that $(\widetilde{L}, v)\in{\bf C}$. Because $\prec_\exists$ is transitive, we have $vK\prec_\exists v\widetilde{L}$ and $Kv\prec_\exists \widetilde{L}v$. Now take an $|\widetilde{L}|^+$-saturated elementary extension $(K^*, v^*)$ of $(K, v)$. Since ${\bf C}$ is elementary, we have $(K^*, v^*)\in{\bf C}$. By Proposition~\ref{ec}, there are embeddings $\rho: v\widetilde{L}\longrightarrow v^*K^*$ over $vK$ and $\sigma: \widetilde{L}v\longrightarrow K^*v^*$ over $Kv$. Moreover, $v\widetilde{L}/vK$ is torsion free, and $\widetilde{L}|L$ and $\widetilde{L}v|Kv$ are separable by Lemma~\ref{sica}. Since $L|K$ is also separable, we get that $\widetilde{L}|K$ is separable. By the Separable Relative Embedding Property, there is an embedding of $(\widetilde{L}, v)$ in $(K^*, v^*)$ over $K$, which shows that $(K, v)\prec_\exists(\widetilde{L}, v)$ in $\cL_{\VF}$. Hence, $(K, v)\prec_\exists(L, v)$ in $\cL_{\VF}$ as well. In the case when $K$ and $L$ are of characteristic zero, this automatically implies $(K, v)\prec_\exists(L, v)$ in $\cL_\rQ$. And in the case when $K$ and $L$ are of a fixed positive characteristic $p$, they have the same finite $p$-degree $e$ by assumption, and hence share a common $p$-basis. By Lemma~\ref{VF2Q}, we then have $(K, v)\prec_\exists(L, v)$ in $\cL_\rQ$.

Now assume that $(L, v), (F, v)\in{\bf C}$ with common separably tame subfield $(K, v)$ such that $L|K$ and $F|K$ are separable, $vL/vK$ is torsion free, $Lv|Kv$ is separable, $vL\equiv_{vK} vF$ and $Lv\equiv_{Kv} Fv$. We have to show that $(L, v)\equiv_{(K, v)} (F, v)$ in $\cL_{\VF}$. If $L$ and $F$ are both of characteristic zero, then the result follows from the corresponding result for tame fields \cite[Lemma 6.1]{FVK_TF}. So, assume without loss of generality that both $L$ and $F$ are of positive characteristic. We can also assume without loss of generality that $(L, v)$ and $(F, v)$ are $\aleph_0$-saturated.

The rest of the proof of this lemma is essentially the same as in \cite[Lemma 6.1]{FVK_TF}. First, we move to elementary extensions $(L_0, v)$ of $(L, v)$ and $(F_0, v)$ of $(F, v)$ such that $vL_0 = vF_0$ and $L_0v = F_0v$. Then we construct two elementary chains  $((L_i, v))_{i < \omega}$ and $((F_i, v))_{i < \omega}$ such that for each $i\ge 0$, the valued fields $(L_{i+1}, v)$ and $(F_{i+1}, v)$ are $\kappa_i^+$-saturated elementary extensions of $(L_i, v)$ and of $(F_i, v)$ respectively, where $\kappa_i = \max\{|L_i|, |F_i|\}$, and such that $vL_i = vF_i$ and $L_iv = F_iv$ for all $i\in\omega$. Since {\bf C} is elementary, all $(L_i, v)$ and $(F_i, v)$ are in {\bf C}. In particular, all of them are separably tame and have the same positive characteristic $p$ and the same finite $p$-degree $e$. Moreover, all $(L_i, v)$ and $(F_i, v)$ are trivially $\aleph_0$-saturated (since $\kappa\ge\aleph_0$). Taking $(L^*, v)$ and $(F^*, v)$ to be the unions over the elementary chains $((L_i, v))_{i < \omega}$ and $((F_i, v))_{i < \omega}$ respectively, we get $(L, v)\prec(L^*, v)$ and $(F, v)\prec(F^*, v)$. We then carry out a back-and-forth construction using the Separable Relative Embedding Property to show that $(L^*, v)$ and $(F^*, v)$ are isomorphic over $K$. For ease of presentation, we set $F_{-1} = L_{-1} := K$.

We start by embedding $(L_0, v)$ in $(F_1, v)$. The identity mappings are embeddings of $vL_0$ in $vF_1$ over $vK$ and of $L_0v$ in $F_1v$ over $Kv$, and we know that $vL_0/vK$ is torsion free and $L_0|K$ and $L_0v|Kv$ are separable. Since $(L_0, v)$ is $\aleph_0$-saturated, $(F_1, v)$ is $\kappa_0^+$-saturated with $\kappa_0\ge |L_0|$, and $(K, v)$ is separably tame, we can apply the Separable Relative Embedding Property to find an embedding $\iota_0$ of $(L_0, v)$ in $(F_1, v)$ over $K$ which respects the embeddings of the value group and the residue field. That is, we have that $v\iota_0L_0 = vF_0$ and $(\iota_0L_0)v = F_0v$.

The rest of the proof is very similar to the proof of \cite[Lemma 6.1]{FVK_TF}. The only additional thing to note is that at the $(i+1)^{st}$ step (for even $i$), while embedding $({\iota}_{i}^{-1}F_{i+1}, v)$ in $(L_{i+2}, v)$ over $L_{i}$, we can apply the Separable Relative Embedding Property because the extension ${\iota}_{i}^{-1}F_{i+1}|L_{i}$ is separable. This is because the isomorphic extension $F_{i+1}|\iota_iL_i$ is separable, which can be seen as follows: Let $\cB$ be a $p$-basis of $F_{i-1}$. Since $F_{i+1}|F_{i-1}$ is separable, it follows that $\cB$ is $p$-independent in $F_{i+1}$. Since $F_{i-1}\subseteq\iota_iL_i\subseteq F_{i+1}$, $\cB$ is also $p$-independent in $\iota_iL_i$. Since $\iota_iL_i$ has the same $p$-degree $e$ as $F_{i-1}$, $\cB$ is in fact a $p$-basis of $\iota_iL_i$. And thus, by Theorem~\ref{pindep}, the extension $F_{i+1}|\iota_iL_i$ is separable. The same applies to the embedding of $({\iota'}_{i+1}^{-1}L_{i+2}, v)$ in $(F_{i+3}, v)$ over $F_{i+1}$ at the $(i+2)^{nd}$ step, where the embedding $\iota'_{i+1} : (F_{i+1}, v)\to (L_{i+2}, v)$ over $K$ is as defined in \cite[Lemma 6.1]{FVK_TF}.

%The rest of the proof is very similar to the proof of \cite[Lemma 6.1]{FVK_TF}. The only additional thing to note is that at the $(i+2)^{nd}$ step (for even $i$), while embedding $({\iota'}_{i+1}^{-1}L_{i+2}, v)$ in $(F_{i+3}, v)$ over $(F_{i+1}, v)$, we can apply the Separable Relative Embedding Property because the extension ${\iota'}_{i+1}^{-1}L_{i+2}|F_{i+1}$ is separable. This is because the isomorphic extension $L_{i+2}|\iota'_{i+1}F_{i+1}$ is separable, which can be seen as follows: Let $\cB$ be a $p$-basis of $L_i$. Since $L_{i+2}|L_i$ is separable, it follows that $\cB$ is $p$-independent in $L_{i+2}$. Since $L_i\subseteq\iota'_{i+1}F_{i+1}\subseteq L_{i+2}$, $\cB$ is also $p$-independent in $\iota'_{i+1}F_{i+1}$. Since $\iota'_{i+1}F_{i+1}$ has the same $p$-degree $e$ as $L_i$, $\cB$ is in fact a $p$-basis of $\iota'_{i+1}F_{i+1}$. And thus, by Theorem~\ref{pindep}, the extension $L_{i+2}|\iota'_{i+1}F_{i+1}$ is separable. The same applies to the embedding of $({\iota'}_{i+2}^{-1}F_{i+3}, v)$ in $(L_{i+4}, v)$ over $(L_{i+2}, v)$ at the $(i+3)^{rd}$ step.

Thus, eventually, we obtain an isomorphism from $(L^*, v)$ onto $(F^*, v)$ over $K$, which shows that $(L^*, v)\equiv_{(K, v)} (F^*, v)$ in $\cL_{\VF}$. Since $(L, v)\prec(L^*, v)$ and $(F, v)\prec(F^*, v)$, this implies that $(L, v)\equiv_{(K, v)}(F, v)$ in $\cL_{\VF}$, as required. We have proved that {\bf C} is separably relatively subcomplete, and we know already from the beginning of this section that this implies that {\bf C} is separably relatively model complete. In the case when all fields in {\bf C} have the same characteristic, we can use the language $\cL_\rQ$. Since in the language $\cL_\rQ$ every structure is automatically separable over any substructure, it follows immediately that {\bf C} is relatively subcomplete, and hence relatively model complete, in the language $\cL_\rQ$.

Finally, assume in addition that all fields in {\bf C} are of fixed equal characteristic. We wish to show that {\bf C} is relatively complete in $\cL_{\VF}$. So, take $(L,v), (F,v)\in {\bf C}$ such that $vL\equiv vF$ and $Lv\equiv Fv$. Fixed characteristic means that $L$ and $F$ have a common prime field $K$. Since we are in the equal characteristic case, the restrictions of their valuations to $K$ is trivial. Hence, $vK = 0$ and consequently, $vL/vK$ is torsion free and $vL\equiv vF$ implies that $vL\equiv_{vK} vF$. Further, $K=Kv$ is also the prime field of $Lv$ and $ Fv$, so $Lv\equiv Fv$ implies that $Lv\equiv_{Kv} Fv$. Since a prime field is always perfect, we also have that $L|K$, $F|K$ and $Lv|Kv$ are all separable. As a trivially valued field, $(K,v)$ is separably tame. From what we have already proved, we obtain that $(L,v)\equiv_{(K,v)} (F,v)$ in $\cL_{\VF}$, which implies that $(L,v)\equiv (F,v)$ in $\cL_{\VF}$. Again, since all fields in {\bf C} have the same characteristic, we can use the language $\cL_\rQ$, and since the predicates $Q_m$, for $m\ge 1$, are definable in $\cL_{\VF}$, it follows that $(L,v)\equiv (F,v)$ in $\cL_{\rQ}$ as well.
\end{proof}

Our goal now is to give a criterion for an elementary class {\bf C} of valued fields to have the Separable Relative Embedding Property. Since Embedding Lemma II (Lemma~\ref{EII}) covers the case of extensions without transcendence defect, we are left to deal with the case of extensions $(L|K, v)$ with transcendence defect. Loosely speaking, these contain an immediate part. We will show that this part can be treated separately, that is, we can find an intermediate field $(L', v)\in {\bf C}$ such that $(L|L', v)$ is immediate and $(L'|K, v)$ has no transcendence defect. The immediate part is then handled by the following theorem. For a proof of this theorem, see \cite[Theorem 1.7(b)]{FVK_TF}.
%an approach described in the following embedding lemma.
%The following more general result is proved in \cite[Theorem 1.7(b)]{FVK_TF}:
\begin{thm}		\label{stake}
Every separable extension $(L|K, v)$ of a separably tame field satisfies the AKE$^\exists$ Principle in $\cL_{\VF}$.
\end{thm}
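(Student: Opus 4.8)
The plan is to reduce to the case of a finitely generated function field and then to induct on the transcendence defect, using Theorem~\ref{sewtd} as the base case and stripping off the immediate part one transcendental at a time via the structure theorem (Theorem~\ref{stt3}) and Proposition~\ref{sam}.

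First I would dispose of the degenerate cases: if $\mathrm{char}\,K=0$ then $(K,v)$ is a tame field and the assertion is an instance of the Ax--Kochen--Ershov principle for tame fields, which is known; so assume $\mathrm{char}\,K=p>0$, hence $\mathrm{char}\,Kv=p$. By Lemma~\ref{sica} the side conditions give that $vL/vK$ is torsion free and $Lv|Kv$ is separable. Fix an $|L|^+$-saturated elementary extension $(K^*,v^*)$ of $(K,v)$; it is again separably tame, because separable tameness is an elementary property in $\cL_{\VF}$ (combine Lemma~\ref{septame} with Theorem~\ref{exmaxvfth}: separable-algebraic maximality is axiomatized by a scheme over the degree of the polynomial, while $p$-divisibility of the value group and perfectness of the residue field are visibly first order). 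By Proposition~\ref{ec}, applied at the level of value groups and residue fields (suitably saturated by Corollary~\ref{interpret}), fix embeddings $\rho\colon vL\to v^*K^*$ over $vK$ and $\sigma\colon Lv\to K^*v^*$ over $Kv$. By Lemma~\ref{embfingen} and Proposition~\ref{ec} it then suffices to embed each finitely generated subextension $(F|K,v)$ of $(L|K,v)$ into $(K^*,v^*)$ over $K$, and it is convenient to arrange each such embedding to respect $\rho$ and $\sigma$; note that $F|K$ is separable, $vF/vK$ torsion free, and $Fv|Kv$ separable.

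Now induct on the transcendence defect of $(F|K,v)$, i.e.\ on the amount by which (\ref{wtdgeq}) fails to be an equality. If it is an equality, $(F|K,v)$ is without transcendence defect and the required embedding is produced exactly as in the proof of Theorem~\ref{sewtd}: replace $K$ by $K^{1/p^\infty}$ (tame, hence defectless, by Lemma~\ref{phtd}), observe that $(FK^{1/p^\infty}|K^{1/p^\infty},v)$ is again without transcendence defect, embed it into $({K^*}^{1/p^\infty},v^*)\subseteq({K^*}^c,v^*)$ via Theorem~\ref{hrwtd} respecting $\rho,\sigma$, and descend into $(K^*,v^*)$ by Embedding Lemma~I (Lemma~\ref{EI}); Embedding Lemma~II (Lemma~\ref{EII}) assembles these steps. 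For the inductive step, assume the transcendence defect of $(F|K,v)$ is $d+1$. A standard small-value modification argument (in the spirit of Lemma~\ref{sepdefextnlem}, applied to an ``extra'' element of a separating transcendence basis of $F|K$) produces $x\in F$, transcendental over $K$, with $(K(x)|K,v)$ immediate and $F|K(x)$ separable. Let $(K_1,v)$ be a maximal immediate separable-algebraic extension of $(K(x),v)$: since $(K(x)|K,v)$ is immediate, $vK_1=vK$ is $p$-divisible and $K_1v=Kv$ is perfect, and $K_1$ is separable-algebraically maximal by construction, so $(K_1,v)$ is separably tame by Lemma~\ref{septame}. Now $(K_1|K,v)$ is an immediate extension of transcendence degree $1$ with $K_1|K$ separable, so by Theorem~\ref{stt3} we may write $K_1=K_1^h=K(z)^h$, whence Proposition~\ref{sam} gives $(K,v)\ec(K_1,v)$ in $\cL_{\VF}$ (the relevant pseudo-Cauchy sequence is of transcendental type, which is exactly what Lemma~\ref{pcc} secures inside the proof of Proposition~\ref{sam}).

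It remains to move the base from $K$ to $K_1$. The compositum $(FK_1|K_1,v)$ is again a finitely generated separable extension with $vFK_1/vK_1$ torsion free and $FK_1v|K_1v$ separable, and --- provided $K_1$, equivalently $x$, has been chosen carefully --- its transcendence defect drops to exactly $d$. Granting this, the induction hypothesis yields $(K_1,v)\ec(FK_1,v)$, hence $(K,v)\ec(FK_1,v)$ by transitivity of $\ec$ along $K\subseteq K_1\subseteq FK_1$, and therefore $(K,v)\ec(F,v)$ because $F\subseteq FK_1$ and existential sentences pass upward; by the reductions above this proves the theorem. The main obstacle is precisely the clause that the transcendence defect of $(FK_1|K_1,v)$ equals $d$: controlling the value group and residue field of the compositum is delicate, since immediate extensions need not be preserved under base change, so $K_1$ cannot be taken as an abstract maximal immediate separable-algebraic extension but must be constructed compatibly with $F$ --- for instance inside a fixed maximal immediate extension of $F$, with separable tameness of $K_1$ recovered from that of a suitable separably tame field lying above it via Lemma~\ref{Xsrac}. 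It is here, rather than in the model-theoretic scaffolding, that the deeper valuation theory --- and the subtlety caused by non-unique maximal immediate extensions --- enters.
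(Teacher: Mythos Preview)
The paper does not give its own proof of Theorem~\ref{stake}; it simply cites \cite[Theorem~1.7(b)]{FVK_TF}. So there is no in-paper argument to compare against, and your attempt must be judged on its own merits.

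Your outline assembles the right ingredients --- the reduction to function fields, Theorem~\ref{stt3}, Proposition~\ref{sam}, and an induction that peels off an immediate transcendental --- and these are indeed the tools used in \cite{FVK_TF}. But the argument, as you yourself concede, is incomplete at the inductive step, and there are further gaps you do not flag. First, Theorem~\ref{stt3} is stated for immediate \emph{function fields} of transcendence degree~$1$; your $K_1$, a maximal immediate separable-algebraic extension of $K(x)$, is typically not finitely generated over $K$, so writing $K_1=K(z)^h$ via Theorem~\ref{stt3} is not licit. (This is repairable: apply Theorem~\ref{stt3} and Proposition~\ref{sam} to each finitely generated sub-function-field of $K_1|K$ and conclude $(K,v)\ec(K_1,v)$ that way.) Second, the existence of $x\in F$ with $(K(x)|K,v)$ immediate and $F|K(x)$ separable is asserted as a ``standard small-value modification'', but Lemma~\ref{sepdefextnlem} manufactures elements with prescribed value or residue, not elements generating an immediate simple extension; you have not shown such an $x$ exists merely from positive transcendence defect. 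Third --- and this is the gap you do identify --- your $K_1$ lives in no specified valued overfield of $F$, so the compositum $FK_1$, its valuation, the separability of $FK_1|K_1$, the side conditions $vK_1\ec vFK_1$ and $K_1v\ec(FK_1)v$, and the drop in transcendence defect are all undefined or unverified.

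In \cite{FVK_TF} these difficulties are avoided by working inside a fixed ambient henselian extension and taking \emph{relative algebraic closures} there (so that composita and valuations are unambiguous), recovering separable tameness at each stage via the mechanism of Lemma~\ref{Xsrac}; the induction then runs over the transcendence degree of the residual immediate part rather than over the transcendence defect of $F|K$ directly. Your sketch gestures toward this in its last sentence but stops short of carrying out the construction.
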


Now we are able to give the announced criterion.
\begin{lem}		\label{critREP}
Let {\bf C} be an elementary class of valued fields which satisfies
\begin{axiom}
\ax{(CSTF)} every field in {\bf C} is separably tame,
\ax{(CRAC)} if $(L, v)\in {\bf C}$ and $K$ is relatively algebraically closed in $L$ such that $Lv|Kv$ is algebraic and $vL/vK$ is a torsion group, then $(K, v)\in {\bf C}$ with $Lv= Kv$ and $vL=vK$.
%\ax{(CIMM)} if $(K, v)\in {\bf C}$, then every henselization of an immediate function field $(F, v)$ of transcendence degree~$1$ over $K$, such that $F|K$ is separable, is already the henselization of a rational function field.
\end{axiom}
Then {\bf C} has the Separable Relative Embedding Property.
\end{lem}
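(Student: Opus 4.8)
The plan is to split $(L|K,v)$ into an extension $(L'|K,v)$ without transcendence defect followed by an immediate extension $(L|L',v)$, to embed $(L',v)$ into $(K^*,v^*)$ by means of Embedding Lemma~II (Lemma~\ref{EII}), and then to extend that embedding over the immediate part using Theorem~\ref{stake} together with the converse part of Proposition~\ref{ec}. All three fields $(K,v)$, $(L,v)$, $(K^*,v^*)$ are separably tame by (CSTF). We may assume $(K^*,v^*)$ is nontrivially valued: otherwise $v^*K^*=0$, so the embedding $\rho$ forces $vL=0$, all fields are trivially valued, $Lv=L$ and $K^*v^*=K^*$, and $\iota:=\sigma$ already works.

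To construct $L'$, I would first choose, by iterating the argument of Lemma~\ref{sepdefextnlem} transfinitely (the lemma itself is stated only for finite transcendence degree), a subset $\cT=\{x_i\}_{i\in I}\cup\{y_j\}_{j\in J}$ of $L$ such that $\{vx_i\}_{i\in I}$ is a maximal rationally independent family of values of $vL$ over $vK$, $\{y_jv\}_{j\in J}$ is a transcendence basis of $Lv|Kv$, and $L|K(\cT)$ is separable; if $v$ is trivial on $L$ this just means choosing a separating transcendence basis of $L|K$. Put $L':=$ the relative algebraic closure of $K(\cT)$ in $L$. Then $L|L'$ is separable by Lemma~\ref{relsep} (since $L|K(\cT)$ is separable and $L'|K(\cT)$ is algebraic), $\cT$ is a standard valuation transcendence basis of $(L',v)$ over $(K,v)$, whence $(L'|K,v)$ is without transcendence defect by Corollary~\ref{svtb->wtd}; and the maximality of $\cT$ makes $Lv|L'v$ algebraic and $vL/vL'$ a torsion group. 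Since $(L,v)\in{\bf C}$ and $L'$ is relatively algebraically closed in $L$, hypothesis (CRAC) applies and yields $(L',v)\in{\bf C}$ with $vL'=vL$ and $L'v=Lv$; in particular $(L',v)$ is again separably tame (by (CSTF)) and $(L|L',v)$ is immediate.

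Now $(L'|K,v)$ is a separable extension without transcendence defect, $vL'/vK=vL/vK$ is torsion free, $L'v|Kv=Lv|Kv$ is separable, and $\rho,\sigma$ are embeddings $vL'\to v^*K^*$ over $vK$ and $L'v\to K^*v^*$ over $Kv$. The argument in the proof of Theorem~\ref{sewtd}, applied to the pair $(L'|K,v)$ and the separably tame, nontrivially valued, $|L|^+$-saturated extension $(K^*,v^*)$, produces for every finitely generated subextension $(F|K,v)$ of $(L'|K,v)$ an embedding $j\colon(F,v)\to({K^*}^c,v^*)$ over $K$ respecting $\rho$ and $\sigma$; so the remaining hypothesis of Lemma~\ref{EII} holds, and Lemma~\ref{EII} gives an embedding $\iota'\colon(L',v)\to(K^*,v^*)$ over $K$ respecting $\rho$ and $\sigma$. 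Identify $(L',v)$ with $\iota'(L')\subseteq(K^*,v^*)$. The extension $(L|L',v)$ is immediate and separable over the separably tame field $(L',v)$, so its side conditions are vacuous and Theorem~\ref{stake} gives $(L',v)\ec(L,v)$ in $\cL_{\VF}$; since $(K^*,v^*)$ is $|L|^+$-saturated, the converse part of Proposition~\ref{ec} then provides an embedding $\iota\colon(L,v)\to(K^*,v^*)$ over $L'$, hence over $K$, extending $\iota'$. Finally, $vL=vL'$ and $Lv=L'v$, so for $a\in L\setminus\{0\}$ we may choose $b\in L'$ with $vb=va$ (and, when $va=0$, with $bv=av$); then $v^*\iota(a)=v^*\iota(b)=v^*\iota'(b)=\rho(vb)=\rho(va)$, and likewise $\iota(a)v^*=\iota'(b)v^*=\sigma(bv)=\sigma(av)$. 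Hence $\iota$ respects $\rho$ and $\sigma$, which completes the proof.

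I expect the main obstacle to be the construction of $\cT$ with $L|K(\cT)$ — equivalently $L|L'$ — separable: this is exactly where the hypothesis that $L|K$ be separable is indispensable, since Theorem~\ref{hrwtd} (which drives Embedding Lemma~II via strong inertial generation) genuinely fails over a base field that is not defectless, and a separably tame field need not be defectless. A secondary technical point is that the auxiliary embeddings into ${K^*}^c$ must be extracted from the \emph{proof} of Theorem~\ref{sewtd} — which passes to the perfect hull $K^{1/p^\infty}$, uses that $(K^{1/p^\infty},v)$ is tame and hence defectless, and exploits the density of $(K^*,v^*)$ in $({K^*}^{1/p^\infty},v^*)$ — rather than from its statement alone.
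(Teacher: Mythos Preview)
Your overall architecture is exactly the paper's: split $(L|K,v)$ into a part without transcendence defect $(L'|K,v)$ and an immediate part $(L|L',v)$, embed $L'$ via Embedding Lemma~II, then extend over the immediate part using Theorem~\ref{stake} and Proposition~\ref{ec}. The gap is precisely where you anticipated it, but your proposed fix does not work.

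Iterating the argument of Lemma~\ref{sepdefextnlem} transfinitely cannot in general produce $\cT$ with $L|K(\cT)$ separable. The reason is a $p$-degree obstruction: if $\cB$ is a $p$-basis of $K$, then $\cB\cup\cT$ is a $p$-basis of $K(\cT)$, and $L|K(\cT)$ separable forces $\cB\cup\cT$ to be $p$-independent in $L$. Hence $|\cT|$ cannot exceed $(\text{$p$-degree of }L)-(\text{$p$-degree of }K)$. But in the situations of interest $L$ has finite $p$-degree while $\cT$ must be a full standard valuation transcendence basis; for instance, take $K=\bF_p$ (perfect) and $(L,v)\in{\bf C}$ nontrivially valued of $p$-degree~$1$: then any admissible $\cT$ would need $|\cT|\le 1$, whereas $Lv|\bF_p$ already has infinite transcendence degree (since $Lv$ is $\aleph_0$-saturated). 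Concretely, the inductive step of Lemma~\ref{sepdefextnlem} needs an element $t_1\notin L^p(\cB\cup\cT_\alpha)$, and once $\cB\cup\cT_\alpha$ has become a $p$-basis of $L$ there is no such $t_1$. Note too that ``$L|K(\cT)$ separable'' and ``$L|L'$ separable'' are not equivalent; only the latter is actually needed, and in the paper's argument $L|K(\cT)$ is \emph{not} separable.

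This is exactly why the definition of the Separable Relative Embedding Property includes the hypothesis that $(L,v)$ be $\aleph_0$-saturated --- a hypothesis you never use. The paper exploits it as follows: since $(L,v)$ is separably tame and nontrivially valued, $vL$ is $p$-divisible and $Lv$ is perfect (Lemma~\ref{septame}); $\aleph_0$-saturation then lets one realise, for each desired value $\gamma$ (resp.\ residue $\bar y$), the countable type asserting ``$vx=\gamma$ (resp.\ $xv=\bar y$) and $x$ has a $p^n$-th root in $L$ for every $n$''. Choosing $\cT$ in this way forces $\cT\subseteq\bigcap_n L^{p^n}\subseteq\bigcap_n K'^{\,p^n}$, where $K'$ is the relative algebraic closure of $K(\cT)$ in $L$. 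Hence the elements of $\cT$ contribute nothing to the $p$-basis of $K'$: if $\cB$ is a $p$-basis of $K$ then $\cB$ is already a $p$-basis of $K'$. Since $L|K$ is separable, $\cB$ is $p$-independent in $L$, and therefore $L|K'$ is separable by Theorem~\ref{pindep}. From there your argument (CRAC, Embedding Lemma~II, Theorem~\ref{stake}, Proposition~\ref{ec}) goes through unchanged.
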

\begin{proof}
Assume that the elementary class {\bf C} satisfies (CSTF) and (CRAC). Take $(L, v)$, $(K^*, v^*)\in {\bf C}$ with $(L, v)$ being $\aleph_0$-saturated, $(K^*, v^*)$ being $|L|^+$-saturated, a separably tame subfield $(K, v)$ of $(L, v)$ and $(K^*, v^*)$ such that $L|K$ is separable, $vL/vK$ is torsion free and $Lv|Kv$ is separable, and embeddings $\rho:\; vL \longrightarrow v^*K^*$ over $vK$ and $\sigma:\>Lv\longrightarrow K^*v^*$ over $Kv$. We have to show that there exists an embedding $\iota:\>(L, v) \longrightarrow (K^*, v^*)$ over $K$ which respects $\rho$ and $\sigma$.

If $K$ (and hence, $L$ and $K^*$) are of characteristic zero, then the result follows from the corresponding result for tame fields \cite[Lemma 6.4]{FVK_TF}. So, assume without loss of generality that they are of positive characteristic $p$. Moreover, if $(L, v)$ is trivially valued, then  the required embedding $\iota$ can be constructed by lifting $\sigma$. So, assume without loss of generality that $(L, v)$ (and consequently $(K^*, v^*)$) is nontrivially valued.

Now take a set $\cT = \{x_i\,,\,y_j\mid i\in I\,,\,j\in J\}$ such that the values $\{vx_i \mid i\in I\}$ form a maximal set of values in $vL$ rationally independent over $vK$, the residues $\{y_jv \mid j\in J\}$ form a transcendence basis of $Lv|Kv$, and the elements $\{x_i^{1/p^n}, y_j^{1/p^n} \mid i\in I, j\in J, n\in\bN\}$ are all in $L$. This is possible to do because $(L, v)$ is $\aleph_0$-saturated by assumption, and  $Lv$ is perfect and $vL$ is $p$-divisible by Lemma~\ref{septame}. For gory details on how saturation is used in this context, see \cite[Lemme 3.5]{FD_thesis}.

Now $vL/vK(\cT)$ is a torsion group and $Lv|K(\cT)v$ is algebraic. Let $K'$ be the relative algebraic closure of $K(\cT)$ within $L$. It follows that also $vL/vK'$ is a torsion group and $Lv|K'v$ is algebraic. Hence, by condition (CRAC), we have that $(K', v)\in {\bf C}$ with $Lv= K'v$ and $vL=vK'$, which shows that the extension $L|K'$ is immediate. Moreover, $\cB\cup\cT$ is a $p$-basis of $K(\cT)$, where $\cB$ is a $p$-basis of $K$. By the way we have chosen $\cT$, it follows that $\cT\subseteq K'^{p^n}$ for every $n\in\bN$. Since $L|K$ is separable, it follows that $\cB$ is $p$-independent in $L$, and hence in $K'$. Thus, $\cB$ is also a $p$-basis of $K'$. In particular, $L|K'$ is separable.

On the other hand, $\cT$ is a standard valuation transcendence basis of $(K'|K, v)$ by construction. Hence, according to \cite[Corollary 2.4]{FVK_TF}, this extension has no transcendence defect. Also, $K'|K$ is separable being a subextension of the separable extension $L|K$. Now $(K^*, v^*)$ is separably tame, and hence by Lemma~\ref{phtd}, $({K^*}^{1/p^\infty}, v^*)$ is tame, in particular henselian, and is contained in $({K^*}^c, v^*)$. Similarly, $(K^{1/p^\infty}, v)$ is tame, in particular defectless. By what we have seen in the proof of Theorem~\ref{sewtd}, for every finitely generated subextension $F|K$ of $K'|K$, we have that the extension $(FK^{1/p^\infty}|K^{1/p^\infty}, v)$ is a valued function field without transcendence defect satisfying the conditions of $vFK^{1/p^\infty}/vK^{1/p^\infty}$ being torsion free and $FK^{1/p^\infty}v|K^{1/p^\infty}v$ being separable, and also having corresponding embeddings of the value group and residue field of $(FK^{1/p^\infty}, v)$ in those of $({K^*}^{1/p^\infty}, v^*)$ over those of $(K^{1/p^\infty}, v)$ extending the given $\rho$ and $\sigma$, respectively. Since $(K^{1/p^\infty}, v)$ is defectless, it follows from Theorem~\ref{hrwtd} that $(FK^{1/p^\infty}|K^{1/p^\infty}, v)$ is strongly inertially generated. Hence, from Lemma~\ref{ael} we obtain an embedding $j:(FK^{1/p^\infty}, v)\longrightarrow({K^*}^{1/p^\infty}, v^*)$ (and hence in $({K^*}^c, v^*)$) over $K^{1/p^\infty}$ that respects $\rho$ and $\sigma$. By taking the restriction of $j$ to $(F, v)$, we obtain an embedding $j: (F, v)\longrightarrow({K^*}^c, v^*)$ over $K$ that respects $\rho$ and $\sigma$. Since this is true for every finitely generated subextension $F|K$ of $K'|K$, and since $(K^*, v^*)$ is henselian by condition (CSTF) and is also an $|L|^+$-, and hence $|K'|^+$-, saturated extension of $(K, v)$, Embedding Lemma II (Lemma~\ref{EII}) gives an embedding of $(K', v)$ in $(K^*, v^*)$ over $K$ that respects $\rho$ and $\sigma$. Now we have to look for an extension of this embedding to $(L, v)$.

We identify $K'$ with its image in $K^*$. Since $L|K'$ is separable and $(K', v)$ is separably tame, the extension $(L|K', v)$ satisfies the AKE$^\exists$ Principle in $\cL_{\VF}$ by Theorem~\ref{stake}. Since $(K^*, v^*)$ is $|L|^+$-saturated, it follows by Proposition~\ref{ec} that $(L, v)$ embeds in $(K^*, v^*)$ over $K'$. Finally, since $(L|K', v)$ is immediate, such an extension automatically respects $\rho$ and $\sigma$. This completes our proof.
\end{proof}

\section{The Model Theory of Separably Tame Fields}
We first show that the property of being a separably tame field of a fixed residue characteristic is elementary. In the residue characteristic zero case, a separably tame field is trivially a tame field, which in turn is equivalent to being a henselian field, which is axiomatized by the axiom scheme (HENS$_n)_{n < \omega}$ mentioned below. Now assume that the residue characteristic is fixed to be a prime $p$. As mentioned in the Introduction, every trivially valued field $(K, v)$ is separably tame. On the other hand, by Lemma~\ref{septame}, a nontrivially valued field of positive residue characteristic is separably tame if and only if it is a separable-algebraically maximal field having $p$-divisible value group and perfect residue field. A valued field $(K,v)$ has $p$-divisible value group if and only if it satisfies the following elementary axiom:
\begin{axiom}
\ax{(VGD$_p$)} $\forall x\,\exists y\>(vxy^p = 0)\,.$
\end{axiom}
Furthermore, $(K,v)$ has perfect residue field if and only if it satisfies:
\begin{axiom}
\ax{(RFD$_p$)} $\forall x\,\exists y\>\Big((vx=0)\>\longrightarrow\>(v(xy^p - 1) > 0)\Big)\,.$
\end{axiom}
Finally, in view of Theorem~\ref{exmaxvfth}, the property of being separable-algebraically maximal is axiomatized by the axiom schemes (HENS$_n)_{n \in\omega}$ and (SMAXP$_n)_{n \in\omega}$:
\begin{axiom}
\ax{(HENS$_n$)} $\forall x\forall y\Big((vy\geq 0\>\wedge\>\bigwedge_{1\leq i\leq n} vx_i\geq 0 \>\wedge\> v(y^n+x_1y^{n-1}+\cdots +x_{n-1}y +x_n)>0\\
\hspace*{5.1cm}\wedge\>v(ny^{n-1}+(n-1)x_1y^{n-2}+\cdots+x_{n-1})=0)\\
\longrightarrow\> \exists z\;(v(y-z)>0\,\wedge\,z^n+x_1z^{n-1}+\cdots +x_{n-1}z +x_n=0)\Big)$\\%\hfill(n\in\bN)\,. \hspace{.3cm}$\\
\ax{(SMAXP$_n$)} $\forall x_1\cdots \forall x_n\Big(\bigvee_{\begin{tabular}{l}$1\le i\le n$\\$p\not|\;i$\end{tabular}}(x_i\not=0) \longrightarrow \\
\exists y\forall z\>\Big(v(y^n+x_1y^{n-1}+\cdots+x_{n-1}y+x_n) \geq v(z^n+x_1z^{n-1}+\cdots+x_{n-1}z+x_n)\Big)\Big)\,.$\\
%\mbox{ }\hfill(n\in\bN)\,.\hspace{2cm}$
\end{axiom}
We summarize: The {\bf theory of trivially valued separably tame fields} is just the theory of fields. The {\bf theory of separably tame fields of residue characteristic $0$} is just the theory of henselian fields of residue characteristic $0$. If $p$ is a prime, then the {\bf theory of nontrivially valued separably tame fields of residue characteristic $p$} is the theory of valued fields together with axioms (VGD$_p$), (RFD$_p$), (HENS$_n)_{n \in\omega}$ and (SMAXP$_n)_{n \in\omega}$. Now we also see how to axiomatize the theory of all separably tame fields. Indeed, for residue characteristic 0 or for trivial valuation, there are no conditions on the value group and the residue field. For residue characteristic $p > 0$ and nontrivial valuation, we have to require (VGD$_p$) and (RFD$_p$). We can do this by the axiom scheme (TAD$_p)_{p\in\omega}$:
\begin{axiom}
\ax{(TAD$_p$)} $\Big(v(\underbrace{1+\cdots+1}_{p \ \rm times}) > 0 \;\wedge\; \exists x\, (x\not=0\wedge v(x) > 0)\Big) \> \longrightarrow\>(\mbox{\rm (VGD$_p$)}\,\wedge\, \mbox{\rm (RFD$_p$)}).$
\end{axiom}
So, the {\bf theory of separably tame fields} is the theory of valued fields together with axioms (TAD$_p)_{p\in\omega}$, (HENS$_n)_{n \in\omega}$ and (SMAXP$_n)_{n \in\omega}\,$.

%Now we will develop the model theory of separably tame fields. Let {\bf STVF} be the elementary class of all separably tame fields. Let {\bf STVF$_e$} be the elementary subclass of {\bf STVF} such that all fields of positive characteristic have a fixed finite $p$-degree $e$. We can express this in the language $\cL_\rQ$ by the axiom scheme (FPD$_{e,\, p})_{p\; prime}$:
%\begin{axiom}
%\ax{(FPD$_{e,\, p}$)} $v(\underbrace{1+\cdots+1}_{p \ \rm times}) > 0\longrightarrow\\
%\Big(\exists x_1 \cdots\exists x_e\,Q_e(x_1,\ldots, x_e)\;\wedge\;\forall x_1\cdots\forall x_{e+1}\neg Q_{e+1}(x_1, \ldots, x_{e+1})\Big)\,$.
%\end{axiom}
%Finally, let {\bf STVF$_{e,\, p}$} be the elementary subclass of {\bf STVF$_e$} such that all fields have equal characteristic $p$, where $p$ is either 0 or a prime. This can be expressed by the axiom scheme (FEC$_p)_{p\; prime}$:
%\begin{axiom}
%\ax{(FEC$_p$)} $\underbrace{1+\cdots+1}_{p \ \rm times} = 0 \;\vee\; v(\underbrace{1+\cdots+1}_{p \ \rm times}) = 0\,$.
%\end{axiom}
%By Lemma~\ref{Xsrac}, {\bf STVF} satisfies condition (CRAC). And, {\bf STVF} satisfies condition (CIMM) by virtue of Theorem~\ref{stt3}. Hence, we can infer from Lemma~\ref{critREP} and Lemma~\ref{REPimp}:

Now we will develop the model theory of separably tame fields. Let {\bf STVF} be the elementary class of all separably tame fields. Let {\bf STVF$_p$} be the elementary subclass of {\bf STVF} such that all fields are of the same characteristic $p$, where $p = 0$ or $p$ is a prime. For prime $p$, we can express this by the axiom (FEC$_p)$:
\begin{axiom}
\ax{(FEC$_p$)} $\underbrace{1+\cdots+1}_{p \ \rm times} = 0\,$.
\end{axiom}
And for $p = 0$, we can express this by the axiom scheme $(\neg$FEC$_p)_{p\in\omega}$.

Finally, let {\bf STVF$_{0, 0}$} be the elementary subclass of {\bf STVF$_0$} such that all fields are of equicharacteristic zero, which can be stated by the axiom scheme (EC0$_p)_{p\in\omega}$:
\begin{axiom}
\ax{(EC0$_p$)} $v(\underbrace{1+\cdots+1}_{p \ \rm times}) = 0\,$;
\end{axiom}
and for $p > 0$, let {\bf STVF$_{p,\, e}$} be the elementary subclass of {\bf STVF$_p$} such that all fields have a fixed finite $p$-degree $e\in\bN$, which can be expressed in the language $\cL_\rQ$ by the axiom (FPD$_{p,\, e})$:
\begin{axiom}
\ax{(FPD$_{p,\, e}$)} $\exists x_1 \cdots\exists x_e\,Q_e(x_1,\ldots, x_e)\;\wedge\;\forall x_1\cdots\forall x_{e+1}\neg Q_{e+1}(x_1, \ldots, x_{e+1})\,$.
\end{axiom}
By Lemma~\ref{Xsrac} and Theorem~\ref{stt3}, {\bf STVF} satisfies conditions (CRAC) and (CIMM). Hence, we can infer from Lemma~\ref{critREP} and Lemma~\ref{REPimp}:

\begin{thm}		\label{septamemod1}
The elementary class {\bf STVF} has the Separable Relative Embedding Property. Thus, the elementary classes {\bf STVF$_0$} and {\bf STVF$_{p,\, e}$} are separably relatively subcomplete and separably relatively model complete in $\cL_{\VF}$, and relatively subcomplete and relatively model complete in $\cL_\rQ$, and are AKE$^\exists$-classes in $\cL_\rQ$. Also, the elementary classes {\bf STVF$_{0, 0}$} and {\bf STVF$_{p,\, e}$} are relatively complete in $\cL_{\VF}$ (equivalently, in $\cL_\rQ$).
\end{thm}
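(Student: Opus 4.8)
The plan is to deduce the entire statement from the two engines already assembled: Lemma~\ref{critREP} (a sufficient condition for the Separable Relative Embedding Property) and Lemma~\ref{REPimp} (which converts that property, together with a $p$-degree hypothesis, into the completeness and AKE$^\exists$ assertions). So the proof is essentially a hypothesis check. First I would record that {\bf STVF} --- and hence each of its subclasses {\bf STVF$_0$}, {\bf STVF$_{p,\, e}$}, {\bf STVF$_{0,0}$} --- is an elementary class: this is exactly the content of the axiomatization given at the start of this section, where the theory of separably tame fields is presented by the schemes (TAD$_p)_{p\in\omega}$, (HENS$_n)_{n\in\omega}$, (SMAXP$_n)_{n\in\omega}$ over the theory of valued fields, and the subclasses are cut out by adjoining (FEC$_p$) or $(\neg$FEC$_p)_{p\in\omega}$, (EC0$_p)_{p\in\omega}$, and (FPD$_{p,\, e}$).

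Next I would verify the two hypotheses of Lemma~\ref{critREP} for {\bf STVF}. Condition (CSTF) is immediate, since {\bf STVF} is by definition the class of all separably tame fields. For (CRAC): given $(L,v)\in{\bf STVF}$ and $K$ relatively algebraically closed in $L$ with $Lv|Kv$ algebraic and $vL/vK$ a torsion group, Lemma~\ref{Xsrac} tells us that $(K,v)$ is separably tame, that $Lv=Kv$, and that $vL/vK$ is torsion free; an abelian group which is simultaneously torsion and torsion free is trivial, so $vL=vK$, and thus $(K,v)\in{\bf STVF}$ with $Lv=Kv$ and $vL=vK$, which is precisely (CRAC). (Theorem~\ref{stt3} is what makes this machinery turn underneath, since it underlies Theorem~\ref{stake}, which the proof of Lemma~\ref{critREP} invokes to dispose of the immediate part of a separable extension.) Lemma~\ref{critREP} now yields that {\bf STVF} has the Separable Relative Embedding Property.

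Then I would pass to the subclasses. The Separable Relative Embedding Property is inherited by every subclass, because its hypotheses require only the two ``large'' fields $(L,v)$ and $(K^*,v^*)$ to lie in the class while the common subfield is merely asked to be separably tame; hence {\bf STVF$_0$}, {\bf STVF$_{p,\, e}$} and {\bf STVF$_{0,0}$} all have it. Applying Lemma~\ref{REPimp} to each of these --- its extra hypothesis that all positive-characteristic members have a fixed finite $p$-degree holds by definition for {\bf STVF$_{p,\, e}$} and vacuously for the two characteristic-zero classes --- yields separable relative subcompleteness and separable relative model completeness in $\cL_{\VF}$, together with the AKE$^\exists$ Principle in $\cL_{\VF}$ for separable extensions within the class. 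Since {\bf STVF$_0$} and {\bf STVF$_{p,\, e}$} are each of a fixed characteristic, the second half of Lemma~\ref{REPimp} promotes these to relative subcompleteness, relative model completeness, and the AKE$^\exists$ Principle, now in $\cL_\rQ$. Finally, {\bf STVF$_{0,0}$} (equicharacteristic $0$) and {\bf STVF$_{p,\, e}$} (characteristic $p>0$, which forces residue characteristic $p$ as well) are of fixed \emph{equal} characteristic, so the ``moreover'' clause of Lemma~\ref{REPimp} delivers relative completeness in $\cL_{\VF}$, and equivalently in $\cL_\rQ$ because the predicates $Q_m$ are $\cL_{\VF}$-definable, so over fields of a fixed characteristic both languages induce the same elementary equivalence.

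The only substantive verification above is (CRAC), and that is a two-line consequence of Lemma~\ref{Xsrac}; everything else is bookkeeping. The genuine difficulty --- the non-uniqueness of maximal immediate extensions of separably tame fields --- has been absorbed long before this point, into Lemma~\ref{critREP} and Lemma~\ref{REPimp}, and ultimately into Theorem~\ref{stt3} and Theorem~\ref{stake}, so here there is no real obstacle remaining.
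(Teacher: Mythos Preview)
Your proposal is correct and follows essentially the same route as the paper: verify (CSTF) trivially and (CRAC) via Lemma~\ref{Xsrac}, apply Lemma~\ref{critREP} to {\bf STVF}, then feed the resulting Separable Relative Embedding Property (inherited by subclasses) into Lemma~\ref{REPimp}. Your write-up is in fact more careful than the paper's one-line justification --- in particular, your explicit observation that the Separable Relative Embedding Property descends to subclasses neatly avoids having to re-verify (CRAC) for {\bf STVF$_{p,\,e}$}, where the $p$-degree condition on $K$ would otherwise need a separate argument.
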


Lemma~\ref{REPimp} does not give the full information about the AKE$^\exists$  Principle because it requires that not only $(K,v)$, but also $(L,v)$ is a member of the class {\bf C}. For an improved result, see Theorem~\ref{stake}.
%\begin{thm}		\label{stake}
%Every separable extension $(L|K, v)$ of a separably tame field satisfies the AKE$^\exists$ Principle in $\cL_{\VF}$.
%\end{thm}

Now let {\bf C} be an elementary class of valued fields. We define
\[v{\bf C}\,:=\,\{vK\mid (K,v)\in {\bf C}\}\;\mbox{\ \ and\ \ }\;{\bf C}v\,:=\,\{Kv\mid (K,v)\in {\bf C}\}\;.\]
If both $v{\bf C}$ and ${\bf C}v$ are model complete elementary classes, then the side conditions $vK\prec vL$ and $Kv\prec Lv$ will hold for every two members $(K, v)\subseteq (L, v)$ of {\bf C}. Similarly, if $v{\bf C}$ and ${\bf C}v$ are complete elementary classes, then the side conditions $vK\equiv vL$ and $Kv\equiv Lv$ will hold for all $(K, v), (L, v)\in {\bf C}$. So, we obtain from Theorem~\ref{septamemod1} and Theorem~\ref{dec} the following result.
\begin{thm}		\label{cormodtf}
If {\bf C} is an elementary class consisting of separably tame fields of a fixed positive characteristic $p$ and a fixed finite $p$-degree $e$, and if $v{\bf C}$ and ${\bf C}v$ are elementary model complete classes, then {\bf C} is model complete in $\cL_{\rQ}$. Analogously, if $v{\bf C}$ and ${\bf C}v$ are elementary complete classes, then {\bf C} is complete in $\cL_{\VF}$. And if $v{\bf C}$ and ${\bf C}v$ admit recursive elementary axiomatizations, then ${\bf C}$ is decidable in $\cL_{\VF}$.
\end{thm}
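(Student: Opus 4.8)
The plan is to read off all three assertions from Theorem~\ref{septamemod1} applied to the subclass \textbf{STVF$_{p,\,e}$}, invoking the hypotheses on $v\textbf{C}$ and $\textbf{C}v$ only to verify the relevant side conditions. Observe first that every $(K,v)\in\textbf{C}$ is a separably tame field of characteristic $p$ and $p$-degree $e$, so $\textbf{C}\subseteq\textbf{STVF$_{p,\,e}$}$, and hence $\textbf{C}$ inherits the relative model completeness, relative completeness and AKE$^\exists$ properties recorded there. For model completeness in $\cL_\rQ$, take $(K,v),(L,v)\in\textbf{C}$ with $(K,v)\subseteq(L,v)$ an extension of $\cL_\rQ$-structures. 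This inclusion induces an embedding $vK\hookrightarrow vL$ of ordered abelian groups and an embedding $Kv\hookrightarrow Lv$ of fields, with $vK,vL\in v\textbf{C}$ and $Kv,Lv\in\textbf{C}v$. Since $v\textbf{C}$ and $\textbf{C}v$ are model complete, $vK\prec vL$ and $Kv\prec Lv$, i.e.\ the side conditions of the AKE$^\prec$ Principle hold. As \textbf{STVF$_{p,\,e}$} is an AKE$^\prec$-class in $\cL_\rQ$ by Theorem~\ref{septamemod1}, it follows that $(K,v)\prec(L,v)$ in $\cL_\rQ$; thus $\textbf{C}$ is model complete in $\cL_\rQ$.

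For completeness in $\cL_{\VF}$, let $(K,v),(L,v)\in\textbf{C}$ be arbitrary. Since $v\textbf{C}$ is a complete class, $vK\equiv vL$; since $\textbf{C}v$ is a complete class, $Kv\equiv Lv$. As \textbf{STVF$_{p,\,e}$} is an AKE$^\equiv$-class in $\cL_{\VF}$ by Theorem~\ref{septamemod1}, we conclude $(K,v)\equiv(L,v)$ in $\cL_{\VF}$. Hence all members of $\textbf{C}$ are elementarily equivalent, so $\textbf{C}$ is complete in $\cL_{\VF}$ (and, since the predicates $Q_m$ are $\cL_{\VF}$-definable and all fields of $\textbf{C}$ share the same characteristic, equivalently in $\cL_\rQ$).

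For decidability, assume in addition that $v\textbf{C}$ and $\textbf{C}v$ admit recursive elementary axiomatizations. Pick any $(K,v)\in\textbf{C}$. Because $v\textbf{C}$ and $\textbf{C}v$ are complete classes, $\mathrm{Th}(vK)=\mathrm{Th}(v\textbf{C})$ and $\mathrm{Th}(Kv)=\mathrm{Th}(\textbf{C}v)$, and these theories are recursively axiomatizable by hypothesis. Theorem~\ref{dec} then gives that $\mathrm{Th}(K,v)$ admits a recursive elementary axiomatization and is decidable in $\cL_{\VF}$; concretely one may take the (recursive) axiom schemes for separably tame fields of characteristic $p$ and $p$-degree $e$ together with the $\cL_{\VF}$-translations, furnished by Lemma~\ref{elprvgrf}, of the given recursive axiomatizations of $\mathrm{Th}(vK)$ and $\mathrm{Th}(Kv)$. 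By the completeness of $\textbf{C}$ proved above, $\mathrm{Th}(\textbf{C})=\mathrm{Th}(K,v)$, so $\textbf{C}$ is decidable in $\cL_{\VF}$.

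I do not expect a real obstacle: the argument is just a matter of feeding the hypotheses on $v\textbf{C}$ and $\textbf{C}v$ into the relative results of Theorem~\ref{septamemod1} and Theorem~\ref{dec}. The only points requiring a little care are that an $\cL_\rQ$-embedding is automatically a separable extension (indeed one sharing a common $p$-basis, as the $p$-degree is fixed to $e$), so that the relative model completeness of \textbf{STVF$_{p,\,e}$} in $\cL_\rQ$ applies verbatim; and that it is precisely the completeness of the classes $v\textbf{C}$ and $\textbf{C}v$ that permits replacing $\mathrm{Th}(vK)$ and $\mathrm{Th}(Kv)$ by the recursively axiomatized class theories $\mathrm{Th}(v\textbf{C})$ and $\mathrm{Th}(\textbf{C}v)$ in the decidability step.
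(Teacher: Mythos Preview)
Your proof is correct and follows essentially the same approach as the paper, which simply says the result is obtained ``from Theorem~\ref{septamemod1} and Theorem~\ref{dec}'' after observing that model completeness (respectively, completeness) of $v\mathbf{C}$ and $\mathbf{C}v$ forces the side conditions $vK\prec vL$, $Kv\prec Lv$ (respectively, $vK\equiv vL$, $Kv\equiv Lv$) to hold automatically. One small remark: for the decidability clause you read the hypothesis as cumulative (``in addition'' to completeness of $v\mathbf{C}$ and $\mathbf{C}v$), and indeed this is what the invocation of Theorem~\ref{dec} requires, since that theorem needs the \emph{complete} theories $\mathrm{Th}(vK)$ and $\mathrm{Th}(Kv)$ to be recursively axiomatizable; the paper's phrasing leaves this implicit, but your reading is the correct one and matches how the result is actually applied later.
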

Note that the converses are true by virtue of Corollary~\ref{vgrfequiv}, provided that $v{\bf C}$ and ${\bf C}v$ are elementary classes. As an immediate consequence we get the following. Fix a prime $p$. Let $G$ be a nontrivial $p$-divisible ordered abelian group, and $k$ be a perfect field of characteristic $p$. Then we have
\begin{thm}
Let {\bf STVF$^{G,\, k}_{p,\, e}$} be the elementary subclass of {\bf STVF$_{p,\, e}$} such that all valued fields have value group and residue field elementarily equivalent to $G$ and $k$, respectively. Then {\bf STVF$^{G,\, k}_{e,\, p}$} is complete in $\cL_{\VF}$. If $\mathrm{Th}(G)$ and $\mathrm{Th}(k)$ are model complete in their respective languages, then {\bf STVF$^{G,\, k}_{e,\, p}$} is also model complete in $\cL_\rQ$. And if $\mathrm{Th}(G)$ and $\mathrm{Th}(k)$ admit recursive elementary axiomatizations in their respective languages, then {\bf STVF$^{G,\, k}_{e,\, p}$} is decidable in $\cL_{\VF}$.
\end{thm}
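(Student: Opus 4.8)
The plan is to obtain this as a corollary of the relative completeness and relative model completeness of $\mathbf{STVF}_{p,\,e}$ recorded in Theorem~\ref{septamemod1}: once the elementary types of the value group and of the residue field are fixed, the ``relative'' statements become absolute. First I would check that $\mathbf{STVF}^{G,\,k}_{p,\,e}$ is an elementary class. Fix axiom sets $\Sigma_G\vdash\mathrm{Th}(G)$ and $\Sigma_k\vdash\mathrm{Th}(k)$ (recursive ones when available) and adjoin to the axioms of $\mathbf{STVF}_{p,\,e}$ the $\cL_{\VF}$-translations $\varphi^{og\to vf}$ for $\varphi\in\Sigma_G$ and $\varphi^{r\to vf}$ for $\varphi\in\Sigma_k$ provided by Lemma~\ref{elprvgrf}; by that lemma a valued field models the resulting theory exactly when it is separably tame of characteristic $p$ and $p$-degree $e$ with $vK\equiv G$ and $Kv\equiv k$. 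One should also observe that the class is non-empty, so that its theory is consistent: for $e=0$ one may take the Hahn series field $k((G))$ over $k$, which is maximal --- hence separable-algebraically maximal and henselian --- with value group $G$ and residue field $k$, so separably tame by Lemma~\ref{septame}, and perfect because $k$ is perfect and $G$ is $p$-divisible; for $e>0$ one invokes the known constructions of separably tame (non-tame) fields of prescribed positive $p$-degree, where a defect extension $K^{1/p}|K$ is produced by a pseudo-Cauchy sequence argument.

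Granting this, completeness in $\cL_{\VF}$ is immediate: for $(K,v),(L,v)\in\mathbf{STVF}^{G,\,k}_{p,\,e}$ one has $vK\equiv G\equiv vL$ and $Kv\equiv k\equiv Lv$, so the relative completeness of $\mathbf{STVF}_{p,\,e}$ (Theorem~\ref{septamemod1}) gives $(K,v)\equiv(L,v)$ in $\cL_{\VF}$. Suppose next that $\mathrm{Th}(G)$ and $\mathrm{Th}(k)$ are model complete, and take $(K,v)\subseteq(L,v)$ in $\mathbf{STVF}^{G,\,k}_{p,\,e}$ as $\cL_\rQ$-structures (so $L|K$ is separable). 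The induced inclusions $vK\hookrightarrow vL$ and $Kv\hookrightarrow Lv$ are embeddings between models of the model complete theories $\mathrm{Th}(G)$ and $\mathrm{Th}(k)$, hence $vK\prec vL$ and $Kv\prec Lv$; the relative model completeness of $\mathbf{STVF}_{p,\,e}$ in $\cL_\rQ$ (Theorem~\ref{septamemod1}) then yields $(K,v)\prec(L,v)$ in $\cL_\rQ$. Finally, if $\mathrm{Th}(G)$ and $\mathrm{Th}(k)$ admit recursive elementary axiomatizations, take $\Sigma_G$ and $\Sigma_k$ recursive above; since the axiom schemes cutting out $\mathbf{STVF}_{p,\,e}$ are recursive and the translation of Lemma~\ref{elprvgrf} is effective, the resulting axiomatization of $\mathbf{STVF}^{G,\,k}_{p,\,e}$ is recursive, and a recursively axiomatized complete theory is decidable. (Alternatively, Theorem~\ref{dec} already gives decidability of the theory of any single member, which equals the theory of the class by completeness.)

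In effect, all the genuine model-theoretic and valuation-theoretic content lies in Theorem~\ref{septamemod1} (through the Separable Relative Embedding Property), and the present statement is the routine ``relative $\Rightarrow$ absolute'' passage. The only point that is not pure bookkeeping is the non-emptiness of $\mathbf{STVF}^{G,\,k}_{p,\,e}$ when $e>0$: this rests on the existence of separably tame fields whose perfect hull is a proper extension --- precisely the phenomenon, flagged throughout the paper, that makes maximal immediate extensions non-unique. I would expect that to be the only place where real work, as opposed to assembly of earlier results, is required.
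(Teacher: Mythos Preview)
Your proposal is correct and follows essentially the same route as the paper: the paper presents this theorem as an immediate consequence of Theorem~\ref{cormodtf} (which itself packages Theorem~\ref{septamemod1} and Theorem~\ref{dec}), while you unwind that packaging and apply Theorem~\ref{septamemod1} directly --- the content is identical. Your extended discussion of non-emptiness of $\mathbf{STVF}^{G,\,k}_{p,\,e}$ is additional care that the paper does not undertake; the stated theorem is formally correct even for an empty class, so this is an enrichment rather than a necessary step.
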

As a final example, we consider the theory of separably tame fields of fixed positive characteristic and fixed finite $p$-degree with nontrivial divisible or $p$-divisible value groups and fixed finite residue field.

\begin{thm}		\label{sptame}
a) \ Every elementary class {\bf C} of separably tame fields of fixed positive characteristic $p$ and fixed finite $p$-degree $e$ with nontrivial divisible value group and fixed residue field $\bF_q$ (where $q = p^n$ for some $n\in\bN$) is model complete in the language $\cL_\rQ$, and complete and decidable in the language $\cL_{\VF}$.\newline
b) \ If ``divisible value group'' is replaced by ``value group elementarily equivalent to $\frac{1}{p^{\infty}}\bZ$'', then {\bf C}
remains elementary, complete and decidable in the language $\cL_{\VF}$.
\end{thm}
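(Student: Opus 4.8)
The plan is to derive both parts from the machinery already assembled---principally the relative completeness and relative model completeness of $\mathbf{STVF}_{p,e}$ recorded in Theorem~\ref{septamemod1}, the decidability criterion Theorem~\ref{dec}, and Theorem~\ref{cormodtf}---together with standard model-theoretic facts about the three auxiliary theories that occur. Those facts are: (i) the theory $T_{\mathrm{d}}$ of nontrivial divisible ordered abelian groups is complete, model complete (it has quantifier elimination in $\cL_\mathrm{OG}$, being the theory of nontrivial ordered $\bQ$-vector spaces) and recursively axiomatizable, and every nontrivial divisible ordered abelian group is a model of it; (ii) $\mathrm{Th}(\bF_q)$ is finitely axiomatizable (``a field with exactly $q$ elements''), hence complete and decidable, and it is trivially model complete, since every model is isomorphic to $\bF_q$ and every embedding of finite fields is an isomorphism, hence elementary; (iii) $\mathrm{Th}(\frac{1}{p^\infty}\bZ)$ is complete and, $\frac{1}{p^\infty}\bZ$ being an archimedean (hence regular) ordered abelian group, it admits a recursive elementary axiomatization and is decidable by the classical model theory of ordered abelian groups, but it is \emph{not} model complete (e.g.\ $x\mapsto 2x$ is a non-elementary self-embedding when $p\neq 2$).

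For part a), fix such a class $\mathbf{C}$; then $\mathbf{C}\subseteq\mathbf{STVF}_{p,e}$, every member has value group a model of $T_{\mathrm{d}}$, and every member has residue field isomorphic to $\bF_q$. For completeness in $\cL_{\VF}$: if $(K,v),(L,v)\in\mathbf{C}$ then $vK\equiv vL$ by completeness of $T_{\mathrm{d}}$ and $Kv\equiv Lv$, whence $(K,v)\equiv(L,v)$ by the relative completeness of $\mathbf{STVF}_{p,e}$. For model completeness in $\cL_\rQ$: if $(K,v)\subseteq(L,v)$ in $\mathbf{C}$ then $vK\prec vL$ by model completeness of $T_{\mathrm{d}}$ and $Kv=Lv$, so $(K,v)\prec(L,v)$ in $\cL_\rQ$ by the relative model completeness of $\mathbf{STVF}_{p,e}$ in $\cL_\rQ$ (equivalently, by Theorem~\ref{cormodtf}, noting that $v\mathbf{C}$ and $\mathbf{C}v$ are elementary model complete classes). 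For decidability: for $(K,v)\in\mathbf{C}$ the theories $\mathrm{Th}(vK)=T_{\mathrm{d}}$ and $\mathrm{Th}(Kv)=\mathrm{Th}(\bF_q)$ both admit recursive elementary axiomatizations, so Theorem~\ref{dec} yields that $\mathrm{Th}(K,v)$ does too and is decidable in $\cL_{\VF}$; by completeness this is the theory of $\mathbf{C}$.

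For part b), replace ``divisible value group'' by ``value group elementarily equivalent to $\frac{1}{p^\infty}\bZ$''. Since $\frac{1}{p^\infty}\bZ$ is $p$-divisible and nontrivial and $\bF_q$ is perfect of characteristic $p$, the resulting class is again an elementary subclass of $\mathbf{STVF}_{p,e}$: it is cut out by the axioms of $\mathbf{STVF}_{p,e}$ together with the $\cL_{\VF}$-translations (Lemma~\ref{elprvgrf}) of $\mathrm{Th}(\frac{1}{p^\infty}\bZ)$ and of $\mathrm{Th}(\bF_q)$, the $p$-degree condition being itself expressible in $\cL_{\VF}$ since the $Q_m$ are $\cL_{\VF}$-definable. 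Completeness in $\cL_{\VF}$ and decidability then follow exactly as in part a), using instead that $\mathrm{Th}(\frac{1}{p^\infty}\bZ)$ is complete and recursively axiomatizable; model completeness in $\cL_\rQ$ is not asserted precisely because $\mathrm{Th}(\frac{1}{p^\infty}\bZ)$ fails to be model complete. I expect no serious obstacle: the argument is bookkeeping over the earlier theorems, the only genuinely external inputs being the cited model-theoretic facts about $\mathrm{Th}(\frac{1}{p^\infty}\bZ)$; for non-vacuity one should also note that such separably tame fields exist---e.g.\ a separable-algebraically maximal immediate extension of a suitably valued rational function field over $\bF_q$ with the prescribed value group---although the stated conclusions hold in any case.
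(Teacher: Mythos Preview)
Your proposal is correct and follows essentially the same approach as the paper: both parts reduce, via Theorem~\ref{cormodtf} (and the underlying Theorem~\ref{septamemod1} and Theorem~\ref{dec}), to the standard model-theoretic properties of the theories of the value group and residue field in question. Your write-up is in fact somewhat more explicit than the paper's, e.g.\ in spelling out why model completeness is lost in part~b); the paper additionally remarks that $\frac{1}{p^\infty}\bZ$ embeds elementarily into every elementarily equivalent ordered abelian group, but this is not needed for the assertions of the theorem as stated.
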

\begin{proof}
a) \ It is well-known that the elementary theory of nontrivial divisible ordered abelian groups is model complete, complete and decidable \cite{DM}. The same holds trivially for the elementary theory of the finite field $\bF_q$ which has $\bF_q$ as the only model up to isomorphism. Hence, model completeness, completeness and decidability follow readily from Theorem~\ref{cormodtf}. \newline
b) \ The elementary theory of $\frac{1}{p^{\infty}}\bZ$ is clearly complete, and it is decidable (and {\bf C} is still elementary) because it can be axiomatized by a recursive set of elementary axioms. Now the proof proceeds as in part a), except that we replace $\bQ$ by $\frac{1}{p^{\infty}}\bZ$ and note that the latter admits an elementary embedding in every elementarily equivalent ordered abelian group \cite{AREZ}.
\end{proof}

Note that in the case of b), model completeness can be reinstated by adjoining a constant symbol to the language $\cL_\rQ$ and by adding axioms that state that the value of the element named by this symbol is divisible by no prime but $p$.

\section{More Applications}		\label{app}
Now we look at two more applications of Lemma~\ref{critREP} and Lemma~\ref{REPimp}. The first application is for the class of separable-algebraically closed valued fields and the second application is for the class of separable-algebraically maximal Kaplansky fields, both of which were considered by Delon \cite{FD_thesis}. Our approach gives an alternate proof to these well-known results.

\subsection{Separably Closed Valued Fields}
Let {\bf SCVF} be the elementary class of all separable-algebraically closed valued fields. Let {\bf SCVF$_p$} be the elementary subclass of {\bf SCVF} in which all fields are of fixed characteristic $p$, where $p = 0$ or $p$ is a prime. Let {\bf SCVF$_{0, 0}$} be the elementary subclass of {\bf SCVF$_0$} in which all fields are of equicharacteristic 0; and for $p > 0$, let {\bf SCVF$_{p,\, e}$} be the elementary subclass of {\bf SCVF$_p$} such that all fields have a fixed finite $p$-degree $e$. Finally, let {\bf SCVF$^{v\not=0}_{p,\, e}$} be the elementary subclass of {\bf SCVF$_{p,\, e}$} in which all valued fields are nontrivially valued. Then we have

\begin{thm}		\label{sepclosed}
{\bf SCVF$^{v\not=0}_{p,\, e}$} is model complete in $\cL_\rQ$, and complete and decidable in $\cL_{\VF}$.
\end{thm}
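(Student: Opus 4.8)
The plan is to derive Theorem~\ref{sepclosed} as a special case of the machinery already set up, namely Lemma~\ref{critREP} together with Lemma~\ref{REPimp} and Theorem~\ref{cormodtf}. The first step is to observe that every separable-algebraically closed valued field is separably tame: since $K = K^{\sep}$, the ramification field $K^r$ of $K^{\sep}|K$ is trivially equal to $K^{\sep}$, so the defining condition of a separably tame field is met (alternatively, a separable-algebraically closed field is henselian, has divisible value group and algebraically closed — hence perfect — residue field by Lemma~\ref{sep_perf_hull}, and is separable-algebraically maximal, so Lemma~\ref{septame} applies). Hence \textbf{SCVF} is an elementary subclass of \textbf{STVF}, and \textbf{SCVF$^{v\neq 0}_{p,\,e}$} is an elementary subclass of \textbf{STVF$_{p,\,e}$}; in particular condition (CSTF) of Lemma~\ref{critREP} holds.

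Next I would verify condition (CRAC) for \textbf{C} $=$ \textbf{SCVF$^{v\neq0}_{p,\,e}$}. Suppose $(L,v)\in$ \textbf{C} and $K$ is relatively algebraically closed in $L$ with $Lv|Kv$ algebraic and $vL/vK$ torsion. Since $(L,v)$ is separably tame, Lemma~\ref{Xsrac} gives immediately that $(K,v)$ is separably tame with $Lv=Kv$ and $vL/vK$ torsion free — but a torsion-free torsion group is trivial, so $vL=vK$. It remains to check that $K$ is itself separable-algebraically closed: if $f\in K[X]$ were a separable polynomial without a root in $K$, it would have a root $a$ in $L=L^{\sep}$ (as $L$ is separable-algebraically closed); then $a$ is separable-algebraic over $K$, hence lies in the relative algebraic closure of $K$ in $L$, which is $K$ — contradiction. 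One also needs the $p$-degree to be preserved: since $L|K$ is separable (being immediate over a relatively algebraically closed subfield, or directly by Lemma~\ref{Xsrac}) a $p$-basis of $K$ stays $p$-independent in $L$ by Theorem~\ref{pindep}, and conversely the finite $p$-degree of $L$ bounds that of $K$, so $K$ has $p$-degree $e$ too; nontriviality of $v$ on $K$ is inherited since $vK=vL\neq 0$. Thus $(K,v)\in$ \textbf{C}, and (CRAC) holds.

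With (CSTF) and (CRAC) established, Lemma~\ref{critREP} tells us \textbf{C} has the Separable Relative Embedding Property, and then Lemma~\ref{REPimp} (using that all fields in \textbf{C} have the fixed finite $p$-degree $e$ and the fixed characteristic $p$) yields that \textbf{C} is relatively subcomplete and relatively model complete in $\cL_\rQ$. To upgrade ``relative'' to absolute completeness and model completeness, I would invoke Theorem~\ref{cormodtf}: I need that $v\mathbf{C}$, the class of value groups, and $\mathbf{C}v$, the class of residue fields, are elementarily complete and model complete in their languages. But for $(K,v)\in$ \textbf{C} the residue field $Kv$ is algebraically closed of characteristic $p$, and the value group $vK$ is a nontrivial divisible ordered abelian group; the theory \textbf{ACF$_p$} and the theory of nontrivial divisible ordered abelian groups are each complete and model complete (and admit recursive axiomatizations), so $v\mathbf{C}$ and $\mathbf{C}v$ are complete and model complete elementary classes. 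Theorem~\ref{cormodtf} then gives model completeness of \textbf{C} in $\cL_\rQ$ and completeness of \textbf{C} in $\cL_{\VF}$; decidability follows from Theorem~\ref{dec} (equivalently the last clause of Theorem~\ref{cormodtf}) since both \textbf{ACF$_p$} and the theory of nontrivial divisible ordered abelian groups are recursively axiomatizable.

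The main obstacle, and the only point requiring genuine care, is the verification of (CRAC) — specifically, checking that relative algebraic closedness of $K$ in $L$ forces $K$ to be separable-algebraically closed and of the same finite $p$-degree, so that $(K,v)$ genuinely lands back in \textbf{C} rather than merely in \textbf{STVF}. Everything else is a matter of quoting the already-proven general theorems for separably tame fields and the classical completeness/model-completeness facts for \textbf{ACF} and divisible ordered abelian groups. I would also remark that the restriction to nontrivially valued fields is exactly what is needed so that, via Lemma~\ref{sep_perf_hull}, the residue field is forced to be algebraically closed (a trivially valued separably closed field has residue field only separably closed), which is what makes $\mathbf{C}v$ a complete elementary class.
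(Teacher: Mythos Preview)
Your overall strategy matches the paper's: verify (CSTF) and (CRAC), invoke Lemma~\ref{critREP} to obtain the Separable Relative Embedding Property, and then apply Lemma~\ref{REPimp} and Theorem~\ref{cormodtf}. The difference is that you try to verify (CRAC) directly for $\mathbf{C}=\mathbf{SCVF}^{v\neq0}_{p,\,e}$, whereas the paper verifies it for the full class $\mathbf{SCVF}$ and then tacitly uses that the Separable Relative Embedding Property is inherited by subclasses.

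This difference is not cosmetic --- your route contains a genuine gap. Condition (CRAC) in fact \emph{fails} for $\mathbf{SCVF}^{v\neq0}_{p,\,e}$ when $e\geq 1$: the relatively algebraically closed subfield $K$ need not have $p$-degree $e$. For a concrete counterexample, let $(K,v)$ be an algebraically closed, nontrivially valued, non-maximal field of characteristic $p$ (so $K$ is perfect, $p$-degree $0$), choose $x$ transcendental over $K$ with $(K(x)|K,v)$ immediate, and set $L=K(x)^{\sep}$. Then $(L|K,v)$ is immediate, $L$ is separable-algebraically closed of $p$-degree $1$, and $K$ is relatively algebraically closed in $L$; yet $(K,v)\notin\mathbf{SCVF}^{v\neq0}_{p,\,1}$. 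Your argument only establishes $p\text{-deg}(K)\leq e$ (and the appeal to Lemma~\ref{Xsrac} for separability of $L|K$ is misplaced --- that lemma does not assert it), so the reverse inequality is missing precisely because it is false.

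The fix is exactly what the paper does: check (CRAC) for all of $\mathbf{SCVF}$, where no $p$-degree or nontriviality condition needs to be verified, conclude via Lemma~\ref{critREP} that $\mathbf{SCVF}$ has the Separable Relative Embedding Property, and then pass to the elementary subclass $\mathbf{SCVF}_{p,\,e}$ (the property is trivially inherited by subclasses from its definition) before invoking Lemma~\ref{REPimp} and Theorem~\ref{cormodtf}.
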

\begin{proof}
We first show that the elementary class {\bf SCVF} has the Separable Relative Embedding Property. Separable-algebraically closed valued fields are trivially separable-algebraically maximal. By Lemma~\ref{sep_perf_hull} and Lemma~\ref{septame}, every separable-algebraically closed nontrivially valued field is separably tame. On the other hand, every separable-algebraically closed trivially valued field is trivially a separably tame field. Thus, {\bf SCVF} satisfies (CSTF). Now suppose $(L, v)$ is a separable-algebraically closed valued field and $K$ is relatively algebraically closed in $L$ such that $Lv|Kv$ is algebraic and $vL/vK$ is a torsion group. Since $(L, v)$ is separably tame, it follows by Lemma~\ref{Xsrac} that $(K, v)$ is separably tame with $Kv = Lv$ and $vK = vL$. Moreover, since $L$ is separable-algebraically closed, the separable-algebraic closure $K^{\sep}$ of $K$ is contained in $L$. Since $K$ is relatively algebraically closed in $L$, it therefore follows that $K = K^{\sep}$. In particular, $(K, v)$ is a separable-algebraically closed valued field with $Kv = Lv$ and $vK = vL$. Thus, {\bf SCVF} satisfies (CRAC). And hence, by Lemma~\ref{critREP}, the elementary class {\bf SCVF} has the Separable Relative Embedding Property.

Consequently, by Lemma~\ref{REPimp}, the elementary classes {\bf SCVF$_0$} and {\bf SCVF$_{p,\, e}$} are separably relatively subcomplete and separably relatively model complete in $\cL_{\VF}$, and relatively subcomplete and relatively model complete in $\cL_\rQ$, and are AKE$^\exists$-classes in $\cL_\rQ$. Also, the elementary classes {\bf SCVF$_{0, 0}$} and {\bf SCVF$_{p,\, e}$} are relatively complete in $\cL_{\VF}$ (equivalently, in $\cL_\rQ$). Our result then immediately follows from the well-known fact that the classes of nontrivial divisible ordered abelian groups and algebraically closed fields are model complete, complete and decidable in their respective languages \cite{DM}.
\end{proof}

\subsection{Separable-algebraically Maximal Kaplansky Fields}
Let {\bf SMKF} be the elementary class of all separable-algebraically maximal Kaplansky fields. Let {\bf SMKF$_p$} be the elementary subclass of {\bf SMKF} in which all fields are of fixed characteristic $p$, where $p = 0$ or $p$ is a prime. Finally, let {\bf SMKF$_{0, 0}$} be the elementary subclass of {\bf SMKF$_0$} such that all fields are of equicharacteristic 0; and for $p > 0$, let {\bf SMKF$_{p,\, e}$} be the elementary subclass of {\bf SMKF$_p$} such that all fields have a fixed finite $p$-degree $e$. Then we have

\begin{thm}		\label{sepclosedmod1}
The elementary class {\bf SMKF} has the Separable Relative Embedding Property. Thus, the elementary classes {\bf SMKF$_0$} and {\bf SMKF$_{p,\, e}$} are separably relatively subcomplete and separably relatively model complete in $\cL_{\VF}$, and relatively subcomplete and relatively model complete in $\cL_\rQ$, and are AKE$^\exists$-classes in $\cL_\rQ$. The elementary classes {\bf SMKF$_{0, 0}$} and {\bf SMKF$_{p,\, e}$} are relatively complete in $\cL_{\VF}$ (equivalently, in $\cL_\rQ$).
\end{thm}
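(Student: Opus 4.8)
The plan is to verify that the class \textbf{SMKF} satisfies the two hypotheses of Lemma~\ref{critREP}, namely (CSTF) and (CRAC), so that \textbf{SMKF} has the Separable Relative Embedding Property; then Lemma~\ref{REPimp}, together with the elementary axiomatizability of Kaplansky conditions and of the $p$-degree (via the predicates $Q_m$ in $\cL_\rQ$), yields all the asserted subcompleteness, model completeness and completeness statements in exactly the same fashion as in the proof of Theorem~\ref{sepclosed}. So the only real work is checking (CSTF) and (CRAC).

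For (CSTF): a separable-algebraically maximal Kaplansky field is a separably tame field by Corollary~\ref{samkap} (equivalently \cite[Corollary 3.11(a)]{FVK_TF}); and a trivially valued field is separably tame as noted in the Introduction, and is also (vacuously) Kaplansky and separable-algebraically maximal, so all members of \textbf{SMKF} are separably tame. For (CRAC): suppose $(L,v)\in\textbf{SMKF}$ and $K$ is relatively algebraically closed in $L$ with $Lv|Kv$ algebraic and $vL/vK$ a torsion group. Since $(L,v)$ is separably tame, Lemma~\ref{Xsrac} gives that $(K,v)$ is separably tame with $Kv=Lv$ and $vK=vL$. It remains to see that $(K,v)$ is again a \emph{Kaplansky} field and that it is separable-algebraically maximal. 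The residue field and value group conditions are inherited verbatim: $vK=vL$ is $p$-divisible because $vL$ is, and $Kv=Lv$ admits no finite extension of degree divisible by $p$ because $Lv$ admits none (being Kaplansky). Thus $(K,v)$ is a Kaplansky field; and a separably tame field is in particular separable-algebraically maximal (it is henselian and separably defectless, cf.\ Section~\ref{sectastf}), so $(K,v)\in\textbf{SMKF}$. Hence (CRAC) holds.

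With (CSTF) and (CRAC) established, Lemma~\ref{critREP} gives that \textbf{SMKF} has the Separable Relative Embedding Property. Now \textbf{SMKF} is an elementary class (the Kaplansky conditions (VGD$_p$), (RFD-type conditions on no $p$-divisible finite residue extension), plus the separable-algebraic maximality scheme (HENS$_n$), (SMAXP$_n$) are all first-order), and by restricting to \textbf{SMKF$_{p,\,e}$} one fixes a finite $p$-degree $e$, so all fields of positive characteristic in that subclass share a common $p$-basis. Therefore Lemma~\ref{REPimp} applies and delivers: \textbf{SMKF$_0$} and \textbf{SMKF$_{p,\,e}$} are separably relatively subcomplete and separably relatively model complete in $\cL_{\VF}$, relatively subcomplete and relatively model complete in $\cL_\rQ$, and AKE$^\exists$-classes in $\cL_\rQ$; and \textbf{SMKF$_{0,0}$} and \textbf{SMKF$_{p,\,e}$}, being of fixed equal characteristic, are relatively complete in $\cL_{\VF}$ (equivalently in $\cL_\rQ$ since the $Q_m$ are $\cL_{\VF}$-definable). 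This is precisely the statement of Theorem~\ref{sepclosedmod1}.

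The main obstacle, if any, is the verification that relative algebraic closedness together with the two torsion/algebraicity conditions really does descend the Kaplansky property --- but this is immediate once Lemma~\ref{Xsrac} is invoked to get $vK=vL$ and $Kv=Lv$, since the Kaplansky conditions are conditions purely on the value group and residue field. So in fact there is no serious obstacle: the whole proof is a routine reduction to the two hypotheses of Lemma~\ref{critREP}, mirroring the proof of Theorem~\ref{sepclosed} for separably closed valued fields, with the single extra observation that the defining conditions of a Kaplansky field are preserved under passing to $(K,v)$.
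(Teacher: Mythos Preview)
Your proof is correct and follows essentially the same route as the paper: verify (CSTF) via Corollary~\ref{samkap}, verify (CRAC) via Lemma~\ref{Xsrac} together with the observation that the Kaplansky conditions depend only on the value group and residue field, then apply Lemma~\ref{critREP} and Lemma~\ref{REPimp}. The only differences are cosmetic---you add a few side remarks about trivially valued fields and axiomatizability that the paper omits, and your phrasing ``share a common $p$-basis'' is slightly loose (it only makes sense for fields in an extension relation), but none of this affects the argument.
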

\begin{proof}
By \cite[Corollary 3.11(a)]{FVK_TF}, all separable-algebraically maximal Kaplansky fields are separably tame. Thus, {\bf SMKF} satisfies (CSTF). Now suppose $(L, v)$ is a separable-algebraically maximal Kaplansky field and $K$ is relatively algebraically closed in $L$ such that $Lv|Kv$ is algebraic and $vL/vK$ is a torsion group. Again, since $(L, v)$ is separably tame, it follows by Lemma~\ref{Xsrac} that $(K, v)$ is separably tame with $Kv = Lv$ and $vK = vL$. In particular, $(K, v)$ is separable-algebraically maximal. Moreover, since $(L, v)$ is a Kaplansky field and $(L|K, v)$ is immediate, $(K, v)$ also satisfies the Kaplansky conditions. Thus, $(K, v)$ is a separable-algebraically maximal Kaplansky field with $Kv = Lv$ and $vK = vL$. Hence, {\bf SMKF} satisfies (CRAC). By Lemma~\ref{critREP}, the elementary class {\bf SMKF} has the Separable Relative Embedding Property. The rest follows by Lemma~\ref{REPimp}.
\end{proof}

Fix a prime $p$. Let $G$ be a nontrivial $p$-divisible ordered abelian group, and $k$ be a field of characteristic $p$ satisfying: for all elements $a_0, \ldots, a_{n-1}, b\in k$, the equation
$$x^{p^n} + a_{n-1}x^{p^{n-1}} + \cdots + a_1x^p + a_0 x + b = 0$$
has a solution in $k$. Such a field is called {\em $p$-closed}. By a similar proof as before, we have
\begin{thm}
Let {\bf SMKF$^{G,\, k}_{p,\, e}$} be the elementary subclass of {\bf SMKF$_{p,\, e}$} such that all valued fields have value group and residue field elementarily equivalent to $G$ and $k$ respectively. Then {\bf SMKF$^{G,\, k}_{p,\, e}$} is complete in $\cL_{\VF}$. If $\mathrm{Th}(G)$ and $\mathrm{Th}(k)$ are model complete in their respective languages, then {\bf SMKF$^{G,\, k}_{p,\, e}$} is also model complete in $\cL_\rQ$. And if $\mathrm{Th}(G)$ and $\mathrm{Th}(k)$ admit recursive elementary axiomatizations in their respective languages, then {\bf SMKF$^{G,\, k}_{p,\, e}$} is decidable in $\cL_{\VF}$.
\end{thm}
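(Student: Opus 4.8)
The plan is to read off all three assertions from the Separable Relative Embedding Property for \textbf{SMKF} (Theorem~\ref{sepclosedmod1}) and the associated transfer principles of Lemma~\ref{REPimp}, in exactly the way the corresponding statement for \textbf{STVF}$^{G,k}_{p,e}$ is deduced from Theorem~\ref{septamemod1}. First I would observe that \textbf{SMKF}$^{G,k}_{p,e}$ is an elementary class and is nonempty: the standing hypotheses --- $G$ nontrivial and $p$-divisible, and $k$ satisfying the displayed additive-polynomial solvability condition (the case $n=1$, $a_0=0$ already gives that $k$ is perfect, and the general case is precisely the Kaplansky condition on the residue field) --- are exactly the conditions a nontrivially valued member of \textbf{SMKF}$_{p,e}$ must impose on its value group and residue field, so such a valued field exists; and by Lemma~\ref{elprvgrf} the requirements $vK\equiv G$ and $Kv\equiv k$ are captured by adjoining to the elementary axioms of \textbf{SMKF}$_{p,e}$ the $\cL_{\VF}$-translations of all sentences of $\mathrm{Th}(G)$ and of $\mathrm{Th}(k)$.

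For completeness in $\cL_{\VF}$, I would invoke that \textbf{SMKF}$_{p,e}$ is relatively complete in $\cL_{\VF}$, i.e.\ an AKE$^\equiv$-class, by Theorem~\ref{sepclosedmod1}. Given $(K,v),(L,v)\in\textbf{SMKF}^{G,k}_{p,e}$ we have $vK\equiv G\equiv vL$ and $Kv\equiv k\equiv Lv$, hence $vK\equiv vL$ and $Kv\equiv Lv$; relative completeness then yields $(K,v)\equiv(L,v)$ in $\cL_{\VF}$. Since this holds for every pair, \textbf{SMKF}$^{G,k}_{p,e}$ is complete in $\cL_{\VF}$, and, the predicates $Q_m$ being $\cL_{\VF}$-definable, also in $\cL_\rQ$.

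For model completeness and decidability: if $\mathrm{Th}(G)$ and $\mathrm{Th}(k)$ are model complete and $(K,v)\subseteq(L,v)$ is an $\cL_\rQ$-extension of members of \textbf{SMKF}$^{G,k}_{p,e}$, then $vK\subseteq vL$ is an extension of models of $\mathrm{Th}(G)$ and $Kv\subseteq Lv$ an extension of models of $\mathrm{Th}(k)$, so model completeness of these theories forces the side conditions $vK\prec vL$ and $Kv\prec Lv$; since \textbf{SMKF}$_{p,e}$ is relatively model complete in $\cL_\rQ$ (an AKE$^\prec$-class, again by Theorem~\ref{sepclosedmod1}), we obtain $(K,v)\prec(L,v)$ in $\cL_\rQ$. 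If instead $\mathrm{Th}(G)$ and $\mathrm{Th}(k)$ admit recursive elementary axiomatizations, then the axioms of separable-algebraically maximal Kaplansky fields of characteristic $p$ and finite $p$-degree $e$, together with the $\cL_{\VF}$-translations of these axiomatizations, form a recursive elementary axiomatization of \textbf{SMKF}$^{G,k}_{p,e}$; being recursively axiomatized and complete, the theory is decidable --- this is just Theorem~\ref{dec} applied to any member of the class.

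The honest answer to which step is the main obstacle is that there is essentially none: the substantive work has already been done in Theorem~\ref{sepclosedmod1}, which checks that \textbf{SMKF} satisfies (CSTF) and (CRAC) and hence has the Separable Relative Embedding Property, and in Lemma~\ref{REPimp}. The only points that require a moment's care are (i) that the finite-$p$-degree hypothesis is genuinely used --- it is what, via Lemma~\ref{VF2Q}, permits passing between $\cL_{\VF}$ and $\cL_\rQ$ for $\ec$ inside Lemma~\ref{REPimp}, and it is what keeps the axiomatization recursive, hence feeds decidability; and (ii) that the hypothesis on $k$ is exactly $p$-closedness and not merely perfectness, so that the residue fields of members of \textbf{SMKF}$^{G,k}_{p,e}$ really do satisfy the Kaplansky condition and the class is genuinely nonempty.
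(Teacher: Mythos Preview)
Your proposal is correct and is exactly the ``similar proof as before'' the paper has in mind: you invoke Theorem~\ref{sepclosedmod1} to get relative completeness and relative model completeness of \textbf{SMKF}$_{p,e}$, feed in the side conditions coming from the hypotheses on $\mathrm{Th}(G)$ and $\mathrm{Th}(k)$, and appeal to Theorem~\ref{dec} for decidability---precisely mirroring how the \textbf{STVF}$^{G,k}_{p,e}$ result is obtained from Theorem~\ref{septamemod1} and Theorem~\ref{cormodtf}. Your extra remarks on nonemptiness and on why the hypothesis on $k$ is exactly $p$-closedness are not in the paper but are correct and worth having.
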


\bigskip

\bibliographystyle{amsplain}
\bibliography{references}

\providecommand{\bysame}{\leavevmode\hbox to3em{\hrulefill}\thinspace}
\providecommand{\MR}{\relax\ifhmode\unskip\space\fi MR }
% \MRhref is called by the amsart/book/proc definition of \MR.
\providecommand{\MRhref}[2]{%
  \href{http://www.ams.org/mathscinet-getitem?mr=#1}{#2}
}
\providecommand{\href}[2]{#2}
\begin{thebibliography}{10}

\bibitem{FD_thesis}
F.~Delon, \emph{Quelques propri\'et\'es des corps valu\'es en th\'eorie de
  mod\`eles}, Th\`ese d'\'etat (1982).

\bibitem{YLE_LQ}
Y.~L. Ersov, \emph{Fields with a solvable theory}, Doklady Akademii Nauk SSSR
  \textbf{174} (1967; English Translation: {\em Soviet Mathematics}, 8:575-576,
  1967.), 19--20.

\bibitem{MFMJ}
Michael~D. Fried and Moshe Jarden, \emph{Field arithmetic}, Springer, Berlin,
  2008.

\bibitem{NJ}
N.~Jacobson, \emph{Lectures in abstract algebra}, vol. III, Van Nostrand,
  Princeton, NJ, 1964.

\bibitem{IK}
I.~Kaplansky, \emph{Maximal fields with valuations $\mbox{I}$}, Duke
  Mathematical Journal \textbf{9} (1942), 303--321.

\bibitem{HKFVK_AP}
H.~Knaf and F.-V. Kuhlmann, \emph{Abhyankar places admit local uniformization
  in any characteristic}, Ann. Scient. Ec. Norm. Sup. \textbf{38} (2005),
  833--846.

\bibitem{HKFVK}
\bysame, \emph{Every place admits local uniformization in a finite extension of
  the function field}, Advances in Mathematics \textbf{221} (2009), 428--453.

\bibitem{FVK_thesis}
F.-V. Kuhlmann, \emph{Henselian function fields and tame fields}, Preprint
  (extended version of Ph.D. thesis), Heidelberg \textbf{available at
  \url{http://math.usask.ca/~fvk/hftf.pdf}} (1990).

\bibitem{FVK_AS}
\bysame, \emph{A classification of $\mbox{A}$rtin $\mbox{S}$chreier defect
  extensions and characterizations of defectless fields}, Illinois Journal of
  Mathematics \textbf{54} (2010), no.~2, 397--448.

\bibitem{FVK_ER}
\bysame, \emph{Elimination of ramification $\mbox{I}$: The generalized
  stability theorem}, Transactions of the American Mathematical Society
  \textbf{362} (2010), no.~11, 5697--5727.

\bibitem{FVK_TF}
\bysame, \emph{The algebra and model theory of tame valued fields}, to appear
  in J. reine angew. Math. \textbf{available at
  \url{http://math.usask.ca/~fvk/MTHTVF.pdf}} (2014).

\bibitem{FVKMPPR}
F.-V. Kuhlmann, M.~Pank, and P.~Roquette, \emph{Immediate and purely wild
  extensions of valued fields}, Manuscripta Mathematica \textbf{55} (1986),
  39--67.

\bibitem{SL_AG}
S.~Lang, \emph{Introduction to algebraic geometry}, Addison-Wesley, Reading,
  Massachusetts, 1958.

\bibitem{SL}
\bysame, \emph{Algebra (revised third edition)}, Springer, New York, 2002.

\bibitem{DM}
D.~Marker, \emph{Model theory: An introduction}, Springer-Verlag, New York,
  2002.

\bibitem{APMZ}
A.~Prestel and M.~Ziegler, \emph{Model theoretic methods in the theory of
  topological fields}, J. reine angew. Math. \textbf{299/300} (1978), 318--341.

\bibitem{AREZ}
A.~Robinson and E.~Zakon, \emph{Elementary properties of ordered abelian
  groups}, Transactions of the American Mathematical Society \textbf{96}
  (1960), 222--236.

\bibitem{LD_thesis}
L.~van~den Dries, \emph{Model theory of fields}, Ph.D. thesis, Utrecht (1978).

\bibitem{CW_SCF}
C.~Wood, \emph{Notes on the stability of separably closed fields}, The Journal
  of Symbolic Logic \textbf{44} (1979), no.~3, 412--416.

\end{thebibliography}

\end{document}